\documentclass[a4paper,11pt, reqno]{amsart}
\parindent 0pt
\usepackage[utf8]{inputenc}
\usepackage{amsmath}
\usepackage{amssymb}
\usepackage{amsthm}
\usepackage{amsrefs} 
\usepackage{a4wide}
\usepackage{paralist}
\usepackage{marvosym}
\usepackage{esint}
\usepackage{dsfont}

\newtheorem{theorem}{Theorem}[section]
\newtheorem{lemma}{Lemma}[section]

\newtheorem{corollary}{Corollary}[section]
\newtheorem{example}{Example}[section]

\theoremstyle{definition}

\theoremstyle{remark}
\newtheorem{remark}{Remark}[section]

\DeclareMathOperator{\E}{\mathds{E}}

\DeclareMathOperator{\Var}{Var}
\DeclareMathOperator{\N}{\mathbb{N}}
\DeclareMathOperator{\Z}{\mathbb{Z}}
\DeclareMathOperator{\R}{\mathbb{R}}

\DeclareMathOperator{\1}{\mathds{1}}

\newcommand{\lnorm}[3]{\left\Vert{#3}\right\Vert_{\ell^{#1}(\N)^{\otimes{#2}}}}
\newcommand{\lnormb}[3]{\big\Vert{#3}\big\Vert_{\ell^{#1}(\N)^{\otimes{#2}}}}


\title[Limit theorems for Rademacher functionals]{Berry-Esseen bounds and multivariate limit theorems\\ for functionals of Rademacher sequences}

\thanks{The authors have been supported by the German Research Foundation (DFG) via SFB-TR 12 ``Symmetries and Universality in Mesoscopic Systems''.}

\author[K. Krokowski]{Kai Krokowski}
\address{Kai Krokowski: Faculty of Mathematics, NA 3/28, Ruhr University Bochum, Germany}
\email{kai.krokowski@rub.de}

\author[A. Reichenbachs]{Anselm Reichenbachs}
\address{Anselm Reichenbachs: Faculty of Mathematics, NA 3/28, Ruhr University Bochum, Germany}
\email{anselm.reichenbachs@rub.de}

\author[C. Th\"ale]{Christoph Th\"ale}
\address{Christoph Th\"ale: Faculty of Mathematics, NA 3/68, Ruhr University Bochum, Germany}
\email{christoph.thaele@rub.de}

\subjclass[2010]{60F05, 60G50, 60B20, 60H07}
\keywords{Berry-Esseen bound, central limit theorem, Malliavin calculus, normal approximation, Rademacher chaos, Stein's method}

\begin{document}

\begin{abstract}
Berry-Esseen bounds for non-linear functionals of infinite Rademacher sequences are derived by means of the Malliavin-Stein method. Moreover, multivariate extensions for vectors of Rademacher functionals are shown. The results establish a connection to small ball probabilities and shed new light onto the relation between central limit theorems on the Rademacher chaos and norms of contraction operators. Applications concern infinite weighted 2-runs, a combinatorial central limit theorem and traces of Bernoulli random matrices.
\end{abstract}

\maketitle

\section{Introduction}

In the seminal paper \cite{NouPecPTRF09} by Nourdin and Peccati it has been demonstrated for the first time that there is a powerful connection between Stein's method and the Malliavin calculus of variations. A main feature of this approach is that the various coupling constructions, which are usually in the background of Stein's method, are replaced by a structural property of the involved random variables. In practice, this is reflected by the use of Malliavin operators and especially through application of an integration-by-parts formula. We mention one of the main results of the paper \cite{NuaPec} of Nualart and Peccati, which shows that a sequence of Gaussian multiple integrals $F_n:=I_q(f_n)$ of fixed order $q\geq 1$ of symmetric and square-integrable functions $f_n$ such that $\E[F_n^2]=1$ for all $n\geq 1$ converges to a standard Gaussian random variable $N$ if and only if the 4th cumulant
$$\kappa_4(F_n):=\E[F_n^4]-3$$
converges to $0$, as $n\to\infty$. Using Stein's method, this has been extended to an estimate on various probability distances between $F_n$ and $N$ (cf. \cites{NouPecPTRF09,NouPecReiInvariance,NouPecBook}).
In fact, a combination of the main results of \cite{NouPecPTRF09} and \cite{NouPecReiInvariance} (see also Theorem 5.2.6 in \cite{NouPecBook}) asserts that
\begin{equation}\label{eq:4thMomentBoundGaussianCase}
d_{K}(F_n,N):=\sup_{x\in\R}\big|P(F_n\leq x)-P(N\leq x)\big|\leq C\,\kappa_4(F_n)^{1/2}
\end{equation}
with a constant $0<C<\infty$ not depending on $n$.  A key step in the proof of this bound is an estimate of $d_K(F_n,N)$ in terms of contraction operators $f_n\star_r^r f_n$, and a comparison with an expression for the 4th cumulant $\kappa_4(F_n)$ in terms of such contractions. We emphasize that the quantitative 4th moment theorem \eqref{eq:4thMomentBoundGaussianCase} is one of the cornerstones and at the heart of the Malliavin-Stein approach. The following optimal bound on the total variation distance $d_{TV}(F_n,N):=\sup\big\{\big|P(F_n\in A)-P(N\in A)\big|:A\subset\R\ \text{Borel}\big\}$ has recently been derived in \cite{NouPecOpt}. Namely, if $F_n$ converges in distribution to a standard Gaussian random variable $N$, then
\begin{equation}\label{eq:4thMomentBoundGaussianCase2}
C_1\max\{|\E[F_n^3]|,\kappa_4(F_n)\}\leq d_{TV}(F_n,N)\leq
C_2 \max\{|\E[F_n^3]|,\kappa_4(F_n)\}\,,\end{equation}
where $0<C_1<C_2<\infty$ are constants which are independent of $n$. Since $d_K(F_n,N)\leq d_{TV}(F_n,N)$, this improves \eqref{eq:4thMomentBoundGaussianCase} significantly in case that $\E[F_n^3]=0$, for example, if $q$ is odd. Since its first appearance, the 4th moment theorem has attracted considerable interest and has further been exploited by many authors. Selected applications concern central limit theorems for non-linear functionals of Gaussian stochastic processes \cite{NouPecBerryEsseen}, random fields on the sphere \cite{MarPec}, random matrices \cite{NouPecMatrices} and universality of homogeneous sums \cite{NouPecReiInvariance} (we also refer to the monograph \cite{NouPecBook} and the exhaustive list of references therein).

\medspace

Besides non-linear functionals of Gaussian random measures, there is another branch to which the Malliavin-Stein approach has been applied, namely non-linear functionals of Poisson random measures, see the papers \cite{PecSolTaqUtz,PecTha,PecZhe} of Peccati, Sol\'e, Taqqu, Utzet and Zheng. These general results have found numerous applications especially in geometric probability and stochastic geometry, see \cite{LacPec1,LacPec2,LPST,ReiSch,Sch} for distinguished examples developed by Lachi\`eze-Rey, Last, Peccati, Penrose, Reitzner, Schulte and Th\"ale. We emphasize that it has been shown by Eichelsbacher and Th\"ale \cite{EicTha}, and Lachi\`eze-Rey and Peccati \cite{LacPec1}, who combined Stein's method with the Malliavin calculus of variations on the Poisson space, that a quantitative 4th moment theorem similar to \eqref{eq:4thMomentBoundGaussianCase} is also available for multiple stochastic integrals with respect to Poisson random measures if the integrands $f_n$ are non-negative.

\medspace

In \cite{ReiPec}, Nourdin, Peccati and Reinert pushed this line of research further by proving quantitative central limit theorems for functionals of so-called infinite Rademacher sequences, which rely on a combination of Stein's method with tools from discrete stochastic analysis as developed in \cite{Pri} by Privault. By a Rademacher sequence we mean in this paper an infinite sequence $X=(X_n)_{n\in\N}$ of independent and identically distributed random variables such that $X_n$ takes the values $\pm 1$ with probability $1/2$. The aim of this paper is to develop the theory of \cite{ReiPec} further in several directions. In particular, our main findings are

\begin{itemize}
\item[(i)] an estimate for the Kolmogorov distance between a possibly non-linear functional of a Radema\-cher sequence $X$ (this is what we call a Rademacher functional) and a Gaussian random variable in terms of Malliavin operators. This refines the bounds of \cite{ReiPec}, where only smooth distances have been considered. The proof of this bound is a non-trivial task as it relies on various new computations involving discrete Malliavin operators and also on a new integration-by-parts formula.
\item[(ii)] a connection between our Malliavin-Stein bound and quantities, which are known as small ball probabilities. They are a measure of anti-concentration and enter the expression for the Kolmogorov distance. We mention that they were not visible in the previous work \cite{ReiPec} because of the smoothness of the test functions used there. It is worth pointing out that small ball probabilities in the context of Berry-Esseen bounds have previously found attention in the works \cite{LitPajRud,RudVer} of Litvak, Pajor Rudelson and Tomczak-Jaegermann, and Rudelson and Vershynin, respectively.
\item[(iii)] a quantitative multivariate central limit theorem dealing with the distance between a vector of Rademacher functionals and a Gaussian random vector. In particular, we show that a vector consisting of discrete multiple stochastic integrals satisfies a multivariate central limit theorem if its entries fulfil univariate central limit theorems. This is the discrete analogue to a similar phenomenon observed by Nourdin, Peccati and R\'eveillac \cite{NouPecRev}, Nualart and Ortiz-Latorre \cite{NuaOrt}, Peccati and Tudor \cite{PecTud} and Peccati and Zheng \cite{PecZhe} for Gaussian or Poisson multiple integrals, respectively.
\item[(iv)] a clarification of the r\^ole of contraction operators in light of necessary conditions for a sequence of discrete multiple stochastic integrals to satisfy a central limit theorem. We develop a new necessary criterion for discrete double integrals, which is based on a novel representation of the 4th cumulant involving on- and off-diagonal terms. We also present a counterexample showing that vanishing contraction norms do not provide a necessary condition. This sheds new light onto a 4th moment theorem for Rademacher functionals, and general quadratic forms as considered by de Jong \cite{Jon} or Chatterjee \cite{Cha}, for example.
\item[(v)] applications of our results to infinite weighted 2-runs, an extended version of a combinatorial central limit theorem and traces of powers of Bernoulli random matrices. This extends the previous results from \cite{BleJan} and also some of the findings in \cite{ReiPec,NouPecMatrices}. In particular, we provide a Berry-Esseen bound for an infinite-dimensional version of a central limit theorem of Blei and Janson and give a direct proof for a multivariate central limit theorem for traces of powers of Bernoulli matrices without resorting to universality results.
\end{itemize}
Our results rely on Stein's method for normal approximation and tools from discrete stochastic analysis. We recall both together with some other preliminaries in Section \ref{sec:Preliminaries}. There, we also provide a new integration-by-parts formula on which our proofs are based on. The one-dimensional Malliavin-Stein bound is the content of Section \ref{sec:1dimbound}, while Section \ref{sec:multipleintegrals} discusses various versions in case of sequences of discrete multiple stochastic integrals. In this context we also develop the announced new necessary criterion for a sequence of discrete stochastic double integrals to satisfy a central limit theorem. Multivariate central limit theorems for vectors of Rademacher functionals are provided in Section \ref{sec:multivariate}. The final Section \ref{sec:applications} contains the applications of our results.

\section{Preliminaries}\label{sec:Preliminaries}

\subsection{Rademacher sequences.}
By a {Rademacher sequence} $(X_n)_{n \in \N}$ we understand a sequence consisting of i.i.d.\ random variables $X_n$ defined on some probability space $(\Omega,\mathcal{F},P)$ such that $$P(X_n=-1)=P(X_n=+1)=\frac{1}{2}\,.$$ They are constructed in the canonical way, namely by taking $$\Omega:=\{-1,+1\}^{\N}\,,\qquad\mathcal{F}:=\mathcal{P}(\{-1,+1\})^{\otimes\N}\,,\qquad P:=\Big(\frac{1}{2}\delta_{-1}+\frac{1}{2}\delta_{+1}\Big)^{\otimes\N}\,,$$ where $\delta_{\pm 1}$ is the unit-mass Dirac measure concentrated at $\pm 1$, and then putting $X_n(\omega):=\omega_n$ for $(\omega_n)_{n\in\N}\in\Omega$. Here and below, we write $\mathcal{P}(M)$ for the power set of a set $M$.

\subsection{Kernels and contractions.}
In what follows we will denote by $\kappa$ the counting measure on $\N$. For $n \geq 1$ define $\ell^2(\N)^{\otimes n} := L^2(\N^n,\mathcal{P}(\N)^{\otimes n},\kappa^{\otimes n})$. Functions in $\ell^2(\N)^{\otimes n}$ are called {kernels} in the sequel. The following subsets of $\ell^2(\N)^{\otimes n}$ are of interest. Let $\ell^2(\N)^{\circ n}$ denote the class of symmetric kernels and $\ell_0^2(\N)^{\otimes n}$ denote the class of kernels vanishing on the diagonal, i.e., on the complement $\Delta_n^c$ of the set 
\[\Delta_n:=\{(i_1,\dots,i_n)\in\N^n: i_k\neq i_l \text{ for } k\neq l\}.\]
Let $\ell_0^2(\N)^{\circ n}$ denote the class of symmetric kernels vanishing on diagonals. For $q=0$ one defines $\ell^2(\N)^{\circ 0}:=\R$. For integers $n, m \geq 1$, $r\in\{0,\dots,n \wedge m\}$, $l\in\{0,\dots,r\}$ and kernels $f\in\ell^2_0(\N)^{\circ n}$ and $g\in\ell^2_0(\N)^{\circ m}$ the {contraction}
\begin{align*}
&f\star_r^lg(i_1,\dots,i_{n-r},j_{1},\dots,j_{r-l},k_{1},\dots,k_{m-r})\\
&:=\sum_{(a_1,\dots,a_l)\in\Delta_l}f(i_1,\dots,i_{n-r},j_{1},\dots,j_{r-l},a_1,\dots,a_l)\, g(k_1,\dots,k_{m-r},j_{1},\dots,j_{r-l},a_1,\dots,a_l)
\end{align*}
arises from the tensor product of $f$ and $g$ by first identifying $r$ of the $n+m$ variables and then by integrating out $l$ of them with respect to the counting measure $\kappa$. In particular, for $f\in\ell^2_0(\N)^{\circ n}$ the contraction $f\star^0_0f$ is the tensor product of $f$ with itself. 
For a function $f$ on $\N^n$ denote by $\tilde{f}=\frac{1}{n!}\sum_{\sigma}f(i_{\sigma(1)},\dots,i_{\sigma(n)})$ its canonical symmetrization, where the sum runs over all permutations $\sigma$ of $\{1,\dots,n\}$. Since $f\star_r^lg$ is usually not symmetric, we often consider its canonical symmetrization $\widetilde{f\star_r^lg}$. Note that $\|\widetilde{f}\|_{\ell^2(\N)^{\otimes n}}\leq\|{f}\|_{\ell^2(\N)^{\otimes n}}$ for any $f\in\ell^2(\N)^{\otimes n}$.

\begin{lemma} Let $q \geq 2$ and suppose that $f\in\ell_0^2(\N)^{\circ q}$.
\begin{itemize}
\item[(i)] It holds that
\begin{align}
(2q)! \, \big\| \widetilde{f \star_0^0 f} \big\|_{\ell^2(\N)^{\otimes 2q}}^2 = 2 \big( q! \lnorm{2}{q}{f}^2 \big)^2 + \sum_{r=1}^{q-1}  (q!)^2\binom{q}{r}^2 \lnorm{2}{2(q-r)}{f \star_r^r f}^2\,. \label{Taqqu}
\end{align}
\item[(ii)] One has
\begin{align}\label{eq:Relation01}
\lnorm{2}{2q-1}{f \star_1^0 f} = \big\| f \star_q^{q-1} f \big\|_{\ell^2(\N)}
\end{align}
and
\begin{align}
\lnorm{2}{2(q-r)+1}{f \star_r^{r-1} f} \leq \lnorm{2}{2(q-r+1)}{f \star_{r-1}^{r-1} f} \label{Contraction inequality}
\end{align}
 for every $r \in\{2,\ldots,q\}$.
\end{itemize}
\end{lemma}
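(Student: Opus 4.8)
The plan is to prove all three identities by expanding the relevant squared $\ell^2$-norms as sums over multi-indices, grouping the resulting terms according to the combinatorial pattern of index coincidences, and recognizing each group as a contraction norm. Throughout I will use that $f$ is symmetric and vanishes on diagonals, so that in any sum of products $f(\cdot)g(\cdot)$ only configurations with the ``within-block'' indices pairwise distinct contribute.

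\textbf{Part (i).} First I would write out $\widetilde{f\star_0^0 f}$ explicitly: since $f\star_0^0 f=f\otimes f$ is a function of $2q$ variables, its symmetrization is $\frac{1}{(2q)!}\sum_\sigma f(i_{\sigma(1)},\dots,i_{\sigma(q)})\,f(i_{\sigma(q+1)},\dots,i_{\sigma(2q)})$. Hence $(2q)!\,\|\widetilde{f\otimes f}\|^2$ equals $\frac{1}{(2q)!}$ times a double sum over permutations $\sigma,\tau$ and over $(i_1,\dots,i_{2q})$ of the corresponding product of four copies of $f$. The key step is to classify pairs $(\sigma,\tau)$ by the size $r$ of the overlap between the first block $\{\sigma(1),\dots,\sigma(q)\}$ and the set $\{\tau(1),\dots,\tau(q)\}$: $r$ ranges over $\{0,\dots,q\}$. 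For a fixed overlap size $r$, after summing out the indices the inner sum is exactly $\langle f\star_r^r f,\, f\star_r^r f\rangle=\|f\star_r^r f\|^2$ (using symmetry of $f$ to absorb the arrangement of arguments), and one counts the number of $(\sigma,\tau)$ giving that pattern. The extreme cases $r=q$ (the two blocks coincide, possibly with the second block being the complement) produce the term $2(q!\|f\|^2)^2$ — the factor $2$ accounting for block/complement symmetry — while $r=0$ contributes nothing new beyond what is already in that term since $f\star_0^0 f$ is just the tensor product. Careful bookkeeping of the permutation counts yields the factor $(q!)^2\binom{q}{r}^2$ for $1\le r\le q-1$. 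This is essentially the Rademacher analogue of a classical product formula; the main obstacle here is getting the combinatorial multiplicities exactly right, and I would double-check them by comparing with the known Gaussian/Poisson version.

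\textbf{Part (ii).} For the identity \eqref{eq:Relation01}, I would expand $\|f\star_1^0 f\|^2$: the kernel $f\star_1^0 f$ has $2q-1$ arguments (one argument shared, none summed), so $\|f\star_1^0 f\|^2=\sum f(a,\mathbf{i})f(a,\mathbf{j})f(a,\mathbf{i})f(a,\mathbf{j})$ summed over the shared index $a$ and two disjoint blocks $\mathbf{i},\mathbf{j}$ of $q-1$ indices each; using the vanishing-on-diagonals property and symmetry, this reorganizes precisely as $\sum_a \big(\sum_{\mathbf i} f(a,\mathbf i)^2\big)^2$, and the inner quantity is $f\star_q^{q-1}f$ evaluated at the single free index $a$, whence the right-hand side $\|f\star_q^{q-1}f\|_{\ell^2(\N)}^2$. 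For the inequality \eqref{Contraction inequality}, I would compare $\|f\star_r^{r-1}f\|^2$ with $\|f\star_{r-1}^{r-1}f\|^2$ by writing both as sums of squares of ``partial inner products'' of $f$ with itself along $r-1$ (resp.\ $r-1$) summed indices, the difference being whether one extra index is shared-but-free or free-and-independent; a Cauchy--Schwarz estimate on the extra coordinate, together with the fact that the matrix obtained by fixing $r-1$ indices and viewing $f$ as a two-parameter array is symmetric positive semidefinite, gives that the norm with the shared free index is dominated by the norm with that index split. Concretely, $\|f\star_r^{r-1}f\|^2 = \sum \big(\text{diagonal entries of a Gram-type matrix}\big)^2 \le \sum\big(\text{full matrix entries}\big)^2 = \|f\star_{r-1}^{r-1}f\|^2$, using that for a PSD matrix $A$ one has $\sum_i A_{ii}^2\le\sum_{i,j}A_{ij}^2$. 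I expect this PSD/Cauchy--Schwarz step to be the only genuinely non-bookkeeping point, and it is short once the right matrix is identified.
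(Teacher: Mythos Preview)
The paper does not actually prove this lemma: it simply cites Formula (11.6.30) in Peccati--Taqqu for part (i) and Lemma 2.4 in Nourdin--Peccati--Reinert for part (ii). So you are supplying a proof where the paper gives none, and your overall strategy is the standard one used in those references.

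Your plan for (i) is correct and is exactly how the cited result is proved: expand the symmetrization, classify pairs of permutations by the overlap $r$ of the two $q$-blocks, and count. Your plan for the identity in (ii) is also correct and essentially a one-line computation once the contractions are written out.

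For the inequality \eqref{Contraction inequality}, however, you are making it harder than it is. Once you observe that
\[
(f\star_r^{r-1}f)(\mathbf{i}',j,\mathbf{k}') \;=\; (f\star_{r-1}^{r-1}f)\big((\mathbf{i}',j),(\mathbf{k}',j)\big),
\]
the squared norm $\|f\star_r^{r-1}f\|^2$ is literally a sub-sum (over the subset where the last coordinates in the two blocks agree) of the non-negative terms making up $\|f\star_{r-1}^{r-1}f\|^2$. No Cauchy--Schwarz and no positive-semidefiniteness is needed: your inequality ``$\sum_i A_{ii}^2\le\sum_{i,j}A_{ij}^2$'' holds for \emph{any} real array, PSD or not, and in any case what appears here is a block-diagonal restriction rather than the true diagonal. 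So the ``only genuinely non-bookkeeping point'' you flag is in fact pure bookkeeping.
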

\begin{proof}
Identity \eqref{Taqqu} is Formula (11.6.30) in \cite{PecTaq} and for part (ii) we refer to Lemma 2.4 in \cite{ReiPec}.
\end{proof}

\subsection{Multiple stochastic integrals and chaotic decomposition.}
For $q \geq 1$ and $f\in\ell^2_0(\N)^{\circ q}$ the {discrete multiple stochastic integral of order $q$} of $f$ is defined as 
\begin{equation}
\label{multint}
J_q(f):=q!\sum_{1\leq i_1<\dots<i_q<\infty}f(i_1,\dots,i_q)\,X_{i_1}\cdots X_{i_q}\,.
\end{equation}
For $q=0$ and $c\in\R$ we put $J_0(c):=c$. The family of random variables of the form $J_q(f)$ with $f\in\ell^2_0(\N)^{\circ q}$ is called {Rademacher chaos of order $q$} (sometimes also called the Walsh chaos of order $q$). Multiple stochastic integrals fulfil the isometry relation
\begin{equation}\label{Isometric relation}
\E \left[ J_q(f)J_p(g) \right] = \1_{\{q=p\}}q!\langle f,g \rangle_{\ell^2(\N)^{\otimes q}}\,.
\end{equation}
Moreover, it is a crucial fact that every $F\in L^2(\Omega)$ possesses a unique decomposition in terms of multiple stochastic integrals, i.e., each $F\in L^2(\Omega)$ can be written as
\[F=\E(F)+\sum_{n=1}^{\infty}J_n(f_n)\,,\]
where $f_n\in\ell^2_0(\N)^{\circ q},n\geq 1,$ is a uniquely determined sequence of kernels (see Section 6 in \cite{Pri}).
In our paper, the following multiplication formula for discrete stochastic integrals will turn out to be crucial (see Proposition 2.9 in \cite{ReiPec}). It says that for all integers $p, q \geq 1$ and symmetric kernels $f \in \ell^2_0(\N)^{\circ q}$ and $g \in \ell^2_0(\N)^{\circ p}$, it holds that
\begin{equation}\label{Multiplication formula}
J_q(f)J_p(g) = \sum_{r=0}^{q \wedge p} r! \binom{q}{r} \binom{p}{r} J_{q+p-2r}\big( ( \widetilde{f \star_r^r g} ) \1_{\Delta_{q+p-2r}}\big)\,. 
\end{equation}

\subsection{Discrete Malliavin operators.}
We are now going to introduce the four important so-called Malliavin operators for which we refer to \cite{Pri}.
\subsubsection{Gradient operator.} 
The {gradient operator} $D$ transforms random variables into random sequences and is defined as $$DJ_q(f_q):=(D_kJ_q(f_q))_{k\in\N}=(J_{q-1}(f_q(\,\cdot\,,k)))_{k\in\N}\in L^2(\Omega\times\N,P\otimes\kappa)$$ on a Rademacher chaos of fixed order $q\geq 1$. We also put $D_kJ_0(c)=0$ for $c\in\R$ and $k\in\N$. It can consistently be extended to the class of functionals $F\in L^2(\Omega)$ of the form $F=\E(F)+\sum_{n=1}^{\infty}J_n(f_n)$, which satisfy the relation
\begin{equation}\label{domD}
\E[\|DF\|^2_{\ell^2(\N)}]=\sum_{n=1}^{\infty}n\,n!\lnorm{2}{n}{f_n}^2<\infty\,.
\end{equation}
The class of all such functionals is the domain of $D$ and will be denoted by dom($D$).
For $F\in\text{dom}(D)$ one has
\[DF=(D_kF)_{k\in\N}=\Big(\sum_{n=1}^{\infty}nJ_{n-1}(f_n(\cdot,k))\Big)_{k\in\N}.\]
The operator $D$ also admits a pathwise representation, which can be used as an alternative definition, especially if the functional does not satisfy condition \eqref{domD}. To spell this out, define for $\omega=(\omega_n)_{n\in\N}\in\Omega$,
\[ \omega_k^+ := (\omega_1,\dots,\omega_{k-1},+1,\omega_{k+1},\dots) \quad\text{and}\quad \omega_k^- := (\omega_1,\dots,\omega_{k-1},-1,\omega_{k+1},\dots)\,. \]
We further define for $F\in L^2(\Omega)$ and $\omega\in\Omega$ the functionals $F_k^+$ and $F_k^-$ by $$F_k^+(\omega):=F(\omega_k^+)\qquad\text{and}\quad F_k^-(\omega):=F(\omega_k^-)\,.$$
Then, for $F\in {\rm dom}(D)$ one has the relation
\[D_kF(\omega)=\frac{1}{2}(F(\omega_k^+)-F(\omega_k^-))\,.\]
Formally, let us denote by $D'$ the operator acting on $F\in L^2(\Omega)$ by 
\[D'_kF(\omega):=\frac{1}{2}(F(\omega_k^+)-F(\omega_k^-))\,.\]
We shall discuss the difference between $D$ and $D'$ now, but before, we define inductively the higher-order gradients by putting $D^n_{k_1,\ldots,k_n}F:=D_{k_1}D^{n-1}_{k_2,\ldots,k_n}F$ for $n\geq 1$, where $D^0:=\text{Id}$ and $D^1:=D$,  and similarly for $(D')^n_{k_1,\ldots,k_n}F$.


\begin{lemma}\label{lem:StroockAndOthers}
\begin{enumerate}[(i)]
\item If $F=\sum_{n=0}^\infty J_n(f_n)$ with $f_n\in\ell^2_0(\N)^{\circ n}$ then 
	\[\E[F\cdot X_{k_1}\cdots X_{k_n}]=n! f_n(k_1,\dots,k_n)\quad\text{for all}\quad(k_1,\ldots,k_n)\in\Delta_n.\]
\item For $F\in L^2(\Omega)$ it holds that
	\begin{equation}\label{stroock}
	\E[(D')^n_{k_1,\ldots,k_n}F]=\E[F\cdot X_{k_1}\cdots X_{k_n}]\quad \text{for all}\quad (k_1,
	\ldots,k_n)\in\Delta_n\,.
	\end{equation}
\item For $F,G\in L^2(\Omega)$ and $k\in\N$ one has
	\begin{equation}\label{Product formula gradient operator}
	D_k(FG) = GD_kF + FD_kG - 2X_k(D_kF)(D_kG).
	\end{equation}
\end{enumerate}
\end{lemma}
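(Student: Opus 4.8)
I would establish the three claims separately. For (i) the plan is to realize the monomial $X_{k_1}\cdots X_{k_n}$ as a discrete multiple integral and then to invoke the isometry relation \eqref{Isometric relation} together with the chaotic decomposition. Since the indices $k_1,\dots,k_n$ are pairwise distinct, the symmetric kernel $f\in\ell^2_0(\N)^{\circ n}$ taking the value $1/n!$ at every permutation of $(k_1,\dots,k_n)$ and vanishing elsewhere satisfies $J_n(f)=X_{k_1}\cdots X_{k_n}$, directly from the definition \eqref{multint} (only the increasingly ordered tuple among the $n!$ permutations contributes to the ordered sum, and the prefactor $n!$ cancels the $1/n!$). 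Inserting $F=\E(F)+\sum_{m\geq1}J_m(f_m)$ into $\E[F\cdot X_{k_1}\cdots X_{k_n}]=\E[F\cdot J_n(f)]$, interchanging expectation and the $L^2(\Omega)$-convergent series (legitimate because $J_n(f)\in L^2(\Omega)$ and the scalar product is continuous), and applying \eqref{Isometric relation} collapses everything to $n!\,\langle f_n,f\rangle_{\ell^2(\N)^{\otimes n}}$; a one-line computation exploiting the symmetry of $f_n$ then gives $\langle f_n,f\rangle_{\ell^2(\N)^{\otimes n}}=f_n(k_1,\dots,k_n)$, which is the assertion.

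For (ii) I would argue purely pathwise. Fix pairwise distinct $k_1,\dots,k_n$. Iterating the pathwise representation $D'_kF=\tfrac12(F_k^+-F_k^-)$ and using that the coordinate-freezing operations at distinct indices do not interfere, one obtains $(D')^n_{k_1,\dots,k_n}F=2^{-n}\sum_{\varepsilon\in\{-1,+1\}^n}\varepsilon_1\cdots\varepsilon_n\,F^\varepsilon$, where $F^\varepsilon$ denotes $F$ with its $k_i$-th coordinate frozen to $\varepsilon_i$; each term lies in $L^2(\Omega)$, so the expectations below are finite. On the other hand, decomposing $F=\sum_{\varepsilon}\big(\prod_{i=1}^n\tfrac{1+\varepsilon_i X_{k_i}}{2}\big)F^\varepsilon$ according to the signs at the coordinates $k_1,\dots,k_n$, multiplying by $X_{k_1}\cdots X_{k_n}$, and taking expectations — using that $F^\varepsilon$ is independent of $X_{k_1},\dots,X_{k_n}$, that $\E[X_{k_i}]=0$ and $X_{k_i}^2=1$ — yields exactly $2^{-n}\sum_{\varepsilon}\varepsilon_1\cdots\varepsilon_n\,\E[F^\varepsilon]$. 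Comparing the two expressions proves (ii). (Equivalently one can run an induction on $n$: the case $n=1$ is the same algebra in one line, and in the inductive step one uses that $D'_{k_j}$ for $j\geq 2$ commutes with multiplication by $X_{k_1}$, since $k_1\notin\{k_2,\dots,k_n\}$.)

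Claim (iii) is a pointwise algebraic identity for the gradient in its pathwise form $D_kF(\omega)=\tfrac12(F(\omega_k^+)-F(\omega_k^-))$, valid for arbitrary $F,G$. Applying the elementary identity $ab-cd=\tfrac12(a-c)(b+d)+\tfrac12(a+c)(b-d)$ with $a=F_k^+,\,b=G_k^+,\,c=F_k^-,\,d=G_k^-$ to $D_k(FG)=\tfrac12(F_k^+G_k^+-F_k^-G_k^-)$ gives $D_k(FG)=(D_kF)\cdot\tfrac12(G_k^++G_k^-)+\tfrac12(F_k^++F_k^-)\cdot(D_kG)$. From the splitting $F=\tfrac{1+X_k}{2}F_k^++\tfrac{1-X_k}{2}F_k^-$ one reads off $\tfrac12(F_k^++F_k^-)=F-X_kD_kF$, and similarly $\tfrac12(G_k^++G_k^-)=G-X_kD_kG$. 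Substituting these and simplifying produces $D_k(FG)=GD_kF+FD_kG-2X_k(D_kF)(D_kG)$.

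All steps are short pathwise or $L^2$ manipulations, and I expect the only point calling for genuine care to be the bookkeeping in (i): getting the $1/n!$ normalization of the kernel representing $X_{k_1}\cdots X_{k_n}$ right, and justifying the term-by-term evaluation of $\E[F\cdot X_{k_1}\cdots X_{k_n}]$ against the infinite chaos expansion via $L^2$-convergence and orthogonality of chaoses of different orders.
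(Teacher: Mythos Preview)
Your proof is correct in all three parts. Parts (i) and (iii) are essentially what the paper does: for (i) the paper simply says it ``follows directly from the definition \eqref{multint}'', which is precisely the computation you spell out via the isometry relation; for (iii) the paper just cites Privault, whereas you supply the pathwise algebra explicitly.

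For (ii) your primary argument differs from the paper's. The paper proceeds by induction on $n$: it first checks $\E[D'_kF]=\E[X_kF]$ from the identities $\E[F_k^\pm]=\pm\E[X_kF]+\E[F]$, and then in the inductive step uses that $D'_{k_j}$ for $j\le n$ commutes with multiplication by $X_{k_{n+1}}$ (since $k_{n+1}\notin\{k_1,\dots,k_n\}$). This is exactly the alternative you sketch in your parenthetical remark. Your main argument instead expands both sides directly over $\varepsilon\in\{-1,+1\}^n$ and matches the resulting finite sums; this avoids induction at the cost of a little more bookkeeping, but has the advantage of making the structure of the identity visible in one shot. Both routes are short and rely on the same underlying ingredients (independence of $F^\varepsilon$ from $X_{k_1},\dots,X_{k_n}$ and $\E[X_{k_i}]=0$).
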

\begin{remark}
Lemma \ref{lem:StroockAndOthers} (i) and (ii) show that
\begin{equation}\label{eq:StrookRichtig}
f_n(k_1,\dots,k_n)={1\over n!}\E[(D')^n_{k_1,\ldots,k_n}F]
\end{equation}
if $F$ has chaotic decomposition $F=\sum_{n=0}^\infty J_n(f_n)$. This is the analogue of the classical {Stroock formula} for Rademacher functionals. 
\end{remark}
\begin{proof}[Proof of Lemma \ref{lem:StroockAndOthers}]
Part (i) follows directly from the definition \eqref{multint} of a discrete multiple stochastic integral. To prove (ii) we first observe that for every $k\in\N$, 
\[\E[F_k^+]=\E[X_k\cdot F]+\E[F]\quad{\rm and}\quad\E[F_k^-]=\E[-X_k\cdot F]+\E[F].\]
We thus get for $k\in\N$ that
\begin{align*}
\E[D'_kF]&=\frac{1}{2}\E[(F_k^+-F_k^-)]=\frac{1}{2}\big((\E[X_k\cdot F]+\E[F])-(\E[-X_k\cdot F]+\E[F])\big)\\
&=\E[X_k\cdot F]\,.
\end{align*}
The general case is proved by induction. Let $n\in\N$ and $(k_1,\ldots,k_{n+1})\in\Delta_{n+1}$. Then, using the induction hypothesis and the fact that from the point of view of $D_{k_1,\ldots,k_n}$ the $k_{n+1}$st entry $X_{k_{n+1}}$ of the Rademacher sequence $X$ behaves like a constant, we find that
\begin{align*}
\E[(D')_{k_1,\ldots,k_{n+1}}^{n+1}F] &= \E[D'_{k_{n+1}}((D')_{k_1,\ldots,k_n}^nF)] = \E[X_{k_{n+1}}\cdot (D')^n_{k_1,\ldots,k_n}F]\\
&= \E[(D')^n_{k_1,\ldots,k_n}(F\cdot X_{k_{n+1}})] = \E[(F\cdot X_{k_{n+1}})\cdot X_{k_1}\cdots X_{k_n}]\\
&=\E[F\cdot X_{k_1}\cdots X_{k_{n+1}}]\,.
\end{align*}
This proves assertion (ii). Part (iii) corresponds to Proposition 7.8 in \cite{Pri}.
\end{proof}

The next lemma formalizes Remark 2.11 in \cite{ReiPec}.
\begin{lemma}
\label{DandDprime}
Let $F\in L^2(\Omega)$. Then $\E\big[\sum_{k=1}^{\infty}(D'_kF)^2\big]<\infty$, if and only if  $F\in\text{dom}(D)$.
\end{lemma}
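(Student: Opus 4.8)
The plan is to compute the quantity $\E\big[\sum_{k=1}^{\infty}(D'_kF)^2\big]$ exactly in terms of the chaotic kernels of $F$ and then to compare the outcome with the series in \eqref{domD} that defines $\text{dom}(D)$. Write $F=\E(F)+\sum_{n=1}^{\infty}J_n(f_n)$ with $f_n\in\ell^2_0(\N)^{\circ n}$. The first (and easy) step is to record that $D'_kF\in L^2(\Omega)$ for every $k\in\N$, even when $F$ is not a priori known to lie in $\text{dom}(D)$: writing $\mathcal F_{\hat k}$ for the $\sigma$-field generated by $(X_j)_{j\neq k}$, one has $\E[F^2\mid\mathcal F_{\hat k}]=\tfrac12\big((F_k^+)^2+(F_k^-)^2\big)$, whence $(D'_kF)^2=\tfrac14(F_k^+-F_k^-)^2\leq\E[F^2\mid\mathcal F_{\hat k}]$ and in particular $\E[(D'_kF)^2]\leq\E[F^2]<\infty$. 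This is what legitimizes the chaos expansion of $D'_kF$ used below.

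The heart of the argument is to identify that expansion. Since $F_k^{\pm}$ do not depend on the coordinate $X_k$, the functional $D'_kF$ is $\mathcal F_{\hat k}$-measurable, so in its decomposition $D'_kF=\sum_{m\geq 0}J_m(g_m^{(k)})$ each kernel $g_m^{(k)}$ vanishes whenever one of its arguments equals $k$. Applying the Stroock-type formula \eqref{eq:StrookRichtig} to $G:=D'_kF$, using that the difference operators $D'$ commute so that $(D')^m_{j_1,\dots,j_m}D'_kF=(D')^{m+1}_{j_1,\dots,j_m,k}F$, and then invoking \eqref{stroock} together with Lemma \ref{lem:StroockAndOthers}(i), one gets for pairwise distinct $j_1,\dots,j_m,k$ that
\[
g_m^{(k)}(j_1,\dots,j_m)=\frac{1}{m!}\,\E\big[F\,X_{j_1}\cdots X_{j_m}X_k\big]=(m+1)\,f_{m+1}(j_1,\dots,j_m,k)\,.
\]
Since both sides vanish off $\Delta_m$ and whenever some $j_i=k$, this identity holds as kernels, i.e.\ $D'_kF=\sum_{n\geq 1}n\,J_{n-1}(f_n(\,\cdot\,,k))$, and the isometry relation \eqref{Isometric relation} yields $\E[(D'_kF)^2]=\sum_{n\geq 1}n^2(n-1)!\,\lnorm{2}{n-1}{f_n(\,\cdot\,,k)}^2$. (An alternative to the Stroock formula here is to truncate, $F^{(N)}:=\E(F)+\sum_{n\leq N}J_n(f_n)$, observe that $G\mapsto G_k^{\pm}$ is bounded on $L^2(\Omega)$ so that $D'_kF^{(N)}=D_kF^{(N)}\to D'_kF$ in $L^2(\Omega)$, and read off the kernels in the limit.)

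Finally I would sum over $k$ and use Tonelli together with $\sum_{k\geq 1}\lnorm{2}{n-1}{f_n(\,\cdot\,,k)}^2=\lnorm{2}{n}{f_n}^2$ to obtain
\[
\E\Big[\sum_{k=1}^{\infty}(D'_kF)^2\Big]=\sum_{n=1}^{\infty}n^2(n-1)!\,\lnorm{2}{n}{f_n}^2=\sum_{n=1}^{\infty}n\,n!\,\lnorm{2}{n}{f_n}^2\,,
\]
and by \eqref{domD} the right-hand side is finite if and only if $F\in\text{dom}(D)$, which is the assertion. The only genuinely delicate point is the second step — justifying the chaos expansion of $D'_kF$ for an arbitrary $F\in L^2(\Omega)$ and keeping track of which diagonal and $k$-dependent terms vanish — and it is precisely resolved by the a priori bound $\E[(D'_kF)^2]\leq\E[F^2]$ combined with the Stroock formula (or, equivalently, the truncation argument).
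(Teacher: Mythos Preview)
Your argument is correct and follows essentially the same route as the paper: identify the chaotic decomposition of $D'_kF$ via the Stroock formula \eqref{eq:StrookRichtig} and compare with the defining series \eqref{domD}. Two small differences are worth noting. First, you establish the a~priori bound $\E[(D'_kF)^2]\leq\E[F^2]$ via conditioning on $\mathcal F_{\hat k}$, so that $D'_kF\in L^2(\Omega)$ holds for \emph{every} $F\in L^2(\Omega)$; the paper instead deduces this only under the hypothesis $\E\big[\sum_k(D'_kF)^2\big]<\infty$, which suffices for the nontrivial implication but is slightly less clean. Second, you carry the computation through to the exact identity $\E\big[\sum_k(D'_kF)^2\big]=\sum_n n\,n!\,\lnorm{2}{n}{f_n}^2$, which delivers both implications at once, whereas the paper writes out only the forward direction and leaves the converse (which is immediate once $D'=D$ on $\text{dom}(D)$) implicit.
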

\begin{proof}
Let $F=\E[F]+\sum_{k=1}^\infty J_n(f_n)$ be the chaotic decomposition of the square integrable random variable $F$ for a sequence of kernels $f_n\in\ell_0^2(\N)^{\circ n}$.
The condition $\E\left[\sum_{k=1}^{\infty}(D'_kF)^2\right]<\infty$ implies that $D'_kF\in L^2(\Omega)$ for all $k\in\N$. Therefore $D'_kF$ has a chaotic decomposition of the form
\[D'_kF=\sum_{n=0}^\infty J_n(f'_n)\]
for a sequence of kernels $f'_n\in\ell_0^2(\N)^{\circ n}$. The Stroock formula \eqref{eq:StrookRichtig} yields
\begin{align*}
f'_n(k_1,\dots,k_n)&=\frac{1}{n!}\E[(D')^n_{k_1,\dots,k_n}D'_kF]=\frac{(n+1)!}{n!}f_{n+1}(k_1,\dots,k_n,k)\\
&=(n+1)f_{n+1}(k_1,\dots,k_n,k)\,.
\end{align*}
We thus get the representation 
\begin{align*}
D'_kF&=\sum_{n=0}^\infty (n+1)\, J_{n}(f_{n+1}(\,\cdot\,, k))=\sum_{n=1}^\infty n\, J_{n-1}(f_{n}(\,\cdot\,, k))
\end{align*}
for the chaotic decomposition of $D'_kF$ which is equal to $D_kF$ and implies that $F\in\text{dom}(D)$. This completes the proof.
\end{proof}

\subsubsection{Divergence operator.} 
The {divergence operator} is the adjoint of $D$. It is given by 
\begin{equation*}
\delta(J_n(u_{n+1}(\,*\,,\,\cdot\,))):=J_{n+1}(\widetilde{u_{n+1}})\,,\qquad n\in\{0,1,\ldots\}\,,
\end{equation*}
if $u_{n+1}\in\ell^2(\N)^{\circ n}\otimes\ell^2(\N)$. It can consistently be extended to the class of random functions $u(\,*\,,\,\cdot\,)\in L^2(\Omega\times\N,P\otimes\kappa)$ with $u(\,*\,,k)=\sum_{n=0}^\infty J_n(u_{n+1}(\,*\,,k))$ and kernels $u_{n+1}(\,*\,,k)\in\ell_0^2(\N)^{\circ n}$ satisfying
\begin{equation*}
\E[\delta(u)^2]=\sum_{n=0}^\infty (n+1)!\|\widetilde{ u_{n+1}}\|_{\ell^2(\N)^{\otimes(n+1)}}^2<\infty\,.
\end{equation*}
The class of these random functions is called the domain of $\delta$ and is denoted by dom($\delta$).
Similar to the difference operator, also the divergence operator admits a pathwise representation, namely
\begin{equation}
\label{deltapathwise}
\delta(u)=\sum_{k=1}^\infty u_kX_k-\sum_{k=1}^\infty D_k u_k
\end{equation}
for $u\in\text{dom}(\delta)$, see \cite[Proposition 9.3]{Pri}.

\subsubsection{Ornstein-Uhlenbeck operator and its inverse.} The {Ornstein-Uhlenbeck operator} $L$ is defined by the relation 
\begin{equation}\label{eq:L=-dD}
L:=-\delta D
\end{equation}
for elements of a fixed Rademacher chaos, see \cite[Proposition 10.1]{Pri}. In other words this means that $$L J_n(f_n)=-nJ_n(f_n)$$ for $f_n\in\ell_0^2(\N)^{\circ n}$. The domain of $L$ is the class of all functionals $F=\sum_{n=0}^\infty J_n(f_n)\in L^2(\Omega)$ such that $\E[(LF)^2]=\sum_{n=1}^\infty n^2n!\lnorm{2}{n}{f_n}^2<\infty$. We notice that $L$ maps $F$ into the class of square-integrable centred random variables $L^2_0(\Omega)$. The (pseudo-) inverse operator $L^{-1}$ of $L$ is defined on $L^2_0(\Omega)$ by $$L^{-1}F=-\sum_{n=1}^\infty\frac{1}{n}J_n(f_n)$$ if $F\in L^2_0(\Omega)$ has representation $F=\sum_{n=1}^\infty J_n(f_n)$.

\subsection{Important identities.}
The following lemma collects two important identities, namely the integration-by-parts formula and an isometric formula for the divergence operator.


\begin{lemma} 
\begin{enumerate}[(i)]
\item Let $F\in \text{dom}(D)$ and $u\in\text{dom}(\delta)$, then
	\begin{equation}\label{intbyparts}
	\E[F\delta(u)]=\E[\langle DF,u\rangle_{\ell^2(\N)}]\,.
	\end{equation}
\item For all $u\in\text{dom}(\delta)$ it holds that
	\begin{equation}\label{Isometry property}
	\E [\delta(u)^2]=\E[\|u\|_{\ell^2(\N)}^2]+\E\Big[\sum_{k,l=1}^\infty 
	(D_ku_l)(D_lu_k)\Big]\,.
	\end{equation}
\end{enumerate}
\end{lemma}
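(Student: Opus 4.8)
The plan is to derive both identities from the structural properties of the discrete Malliavin operators, mimicking the corresponding proofs on Gaussian and Poisson space but using the pathwise representation \eqref{deltapathwise} of the divergence and the product rule \eqref{Product formula gradient operator} wherever discrete corrections appear. For part (i), the cleanest route is to reduce everything to multiple stochastic integrals. Writing $F=\E(F)+\sum_{n\geq 1}J_n(f_n)$ and $u(*,k)=\sum_{n\geq 0}J_n(u_{n+1}(*,k))$, one uses linearity and approximation by finite chaos expansions (justified by the domain conditions defining $\mathrm{dom}(D)$ and $\mathrm{dom}(\delta)$) to reduce to the case $F=J_q(f)$, $u(*,k)=J_p(u_{p+1}(*,k))$. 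Then $\delta(u)=J_{p+1}(\widetilde{u_{p+1}})$ by definition, while $D_kF=qJ_{q-1}(f(\cdot,k))$, so both sides of \eqref{intbyparts} vanish unless $q=p+1$ by the isometry \eqref{Isometric relation}. In that case the left-hand side equals $q!\langle f,\widetilde{u_{q}}\rangle_{\ell^2(\N)^{\otimes q}}$, and the right-hand side equals $\E\big[\sum_k q J_{q-1}(f(\cdot,k))J_{q-1}(u_q(\cdot,k))\big]=q\,(q-1)!\sum_k\langle f(\cdot,k),u_q(\cdot,k)\rangle_{\ell^2(\N)^{\otimes(q-1)}}=q!\langle f,u_q\rangle_{\ell^2(\N)^{\otimes q}}$; since $f$ is symmetric this coincides with $q!\langle f,\widetilde{u_q}\rangle$, giving the claim.

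For part (ii), I would start from the pathwise formula \eqref{deltapathwise}, $\delta(u)=\sum_k u_kX_k-\sum_k D_ku_k$, and expand the square. Abbreviating $S_1:=\sum_k u_kX_k$ and $S_2:=\sum_k D_ku_k$, we need $\E[S_1^2]-2\E[S_1S_2]+\E[S_2^2]$. The term $\E[S_1^2]=\sum_{k,l}\E[u_ku_lX_kX_l]$ should be handled by conditioning on the coordinates other than $k,l$: writing $u_k=\E_k u_k+X_k D_k u_k$ where $\E_k$ denotes the conditional expectation removing the dependence on $X_k$ (so that $\E_k u_k$ and $D_k u_k$ are independent of $X_k$), one expands $u_k u_l X_k X_l$ and takes expectations, using $\E[X_k]=0$, $\E[X_k^2]=1$, and independence. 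This will produce $\E[\sum_k u_k^2]$ plus cross terms of the form $\E[\sum_{k\neq l}(D_k u_l)(D_l u_k)\,(\text{something})]$; the diagonal $k=l$ in the last sum can be included or excluded since a careful bookkeeping shows the resulting adjustment matches a corresponding term from $\E[S_2^2]$ and the cross term $-2\E[S_1S_2]$. The goal is to see that after cancellation all that survives is $\E[\|u\|^2_{\ell^2(\N)}]+\E[\sum_{k,l}(D_ku_l)(D_lu_k)]$.

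The main obstacle I anticipate is the bookkeeping in part (ii): the pathwise divergence formula involves the operator $D$ (not $D'$), and the product rule \eqref{Product formula gradient operator} for $D$ carries the extra discrete term $-2X_k(D_kF)(D_kG)$, so one must be careful about which terms are ``diagonal'' ($k=l$) and track them consistently. In particular $D_k u_k$ is itself a random variable one may need to differentiate again, and $D_k$ applied to a product is not the Leibniz rule. A convenient device is the pointwise decomposition $u_k(\omega)=u_k^+(\omega)\1_{\{X_k=1\}}+u_k^-(\omega)\1_{\{X_k=-1\}}$ together with $u_k^{\pm}$ being $X_k$-measurable-free, reducing each expectation to an average over the two values of $X_k$; this turns the identity into a finite algebraic check at each coordinate followed by summation. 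Alternatively, both parts of this lemma are stated in \cite{Pri} (Propositions 9.3 and the surrounding isometry results), so at the level of this paper one may simply invoke that reference; but since our later proofs lean heavily on \eqref{Isometry property}, I would include the self-contained computation sketched above, deferring the purely routine verification of the coordinate-wise identity.
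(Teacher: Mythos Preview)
The paper does not prove this lemma at all: it simply cites \cite[Proposition 9.2]{Pri} for part (i) and \cite[Proposition 9.3]{Pri} for part (ii). Your proposal goes well beyond this by sketching self-contained arguments. Your chaos-expansion proof of (i) is correct and complete: reducing by linearity and approximation to $F=J_q(f)$, $u_k=J_{q-1}(u_q(\cdot,k))$, and then matching both sides via the isometry \eqref{Isometric relation} is exactly the standard argument, and the observation that $\langle f,u_q\rangle=\langle f,\widetilde{u_q}\rangle$ by symmetry of $f$ closes the computation cleanly.

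For part (ii) your pathwise-expansion strategy is reasonable but, as you yourself flag, the bookkeeping is not carried out, and the discrete correction term $-2X_k(D_kF)(D_kG)$ in the product rule makes the cancellation genuinely delicate; what you have written is a plan rather than a proof. Since you already note that both identities are in \cite{Pri}, the pragmatic choice---and the one the paper makes---is simply to cite that reference. If you do want a self-contained argument for (ii), a cleaner route than expanding $\delta(u)^2$ pathwise is to stay in the chaos picture: write $u_k=\sum_{n\geq 0}J_n(u_{n+1}(\cdot,k))$, compute $\E[\delta(u)^2]=\sum_{n\geq 0}(n+1)!\|\widetilde{u_{n+1}}\|^2$ directly from the definition of $\delta$, and separately expand the right-hand side of \eqref{Isometry property} using $D_ku_l=\sum_{n\geq 1}nJ_{n-1}(u_{n+1}(\cdot,k,l))$ and the isometry; this avoids the $X_k$-conditioning and the diagonal tracking you anticipate as the main obstacle.
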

\begin{proof}
Part (i) is \cite[Proposition 9.2]{Pri} and for part (ii) we refer to \cite[Proposition 9.3]{Pri}.
\end{proof}


In the proof of one of our main results we will encounter the expression $D'\1_{\{F>x\}}$ with $x\in\R$ and would like to apply the integration-by-parts formula \eqref{intbyparts} to it. Unfortunately, it is not clear in general whether the integrability condition of Lemma \ref{DandDprime} is satisfied for $\1_{\{F>x\}}$ or not. To overcome this difficulty we follow the strategy introduced in \cite{Sch1} in a different context and now develop an integration-by-parts formula for functionals $F$ not necessarily belonging to dom($D$). 

\begin{lemma}\label{Integration by parts II}
Suppose that $F\in L^2(\Omega)$ is bounded, $u\in L^2(\Omega\times\N)$ is such that $u_k:=u(\,\cdot\,,k)$ is independent of $X_k$ and $(D'_k F)u_k\geq 0$ for all $k\in\N$. Then,
\[\E[F\delta(u)]=\E[\langle D'F,u\rangle_{\ell^2(\N)}]\,.\]
\end{lemma}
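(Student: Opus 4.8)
The plan is to approximate $F$ by functionals in $\mathrm{dom}(D)$ for which the ordinary integration-by-parts formula \eqref{intbyparts} applies, and then pass to the limit, using the sign condition $(D'_kF)u_k\geq 0$ to control the error terms via monotone/dominated convergence. Concretely, since $F\in L^2(\Omega)$, write its chaotic decomposition $F=\sum_{n=0}^\infty J_n(f_n)$ and set $F^{(N)}:=\sum_{n=0}^N J_n(f_n)$. Each $F^{(N)}$ is a finite sum of multiple integrals, hence lies in $\mathrm{dom}(D)$, and $F^{(N)}\to F$ in $L^2(\Omega)$; moreover $D_kF^{(N)}=(D')_kF^{(N)}$ since the pathwise and chaotic gradients agree on $\mathrm{dom}(D)$. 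The first step is therefore to record that \eqref{intbyparts} gives
\[
\E[F^{(N)}\delta(u)]=\E[\langle D'F^{(N)},u\rangle_{\ell^2(\N)}]
\]
provided $u\in\mathrm{dom}(\delta)$; here one should first handle the case where $u$ has finite chaos expansion and only finitely many nonzero coordinates $u_k$, reducing everything to honest finite sums, and then remove these restrictions at the end by a further approximation in $L^2(\Omega\times\N)$.

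The left-hand side converges: $\E[F^{(N)}\delta(u)]\to\E[F\delta(u)]$ because $F^{(N)}\to F$ in $L^2$ and $\delta(u)\in L^2(\Omega)$. The content is in the right-hand side. Using the pathwise formula $D'_kF^{(N)}(\omega)=\tfrac12(F^{(N)}(\omega_k^+)-F^{(N)}(\omega_k^-))$ and the hypothesis that $u_k$ is independent of $X_k$ (so that $u_k(\omega_k^+)=u_k(\omega_k^-)=u_k(\omega)$), one writes
\[
\langle D'F^{(N)},u\rangle_{\ell^2(\N)}=\sum_{k=1}^\infty \tfrac12\big(F^{(N)}(\omega_k^+)-F^{(N)}(\omega_k^-)\big)u_k(\omega).
\]
Since $F^{(N)}(\omega^\pm_k)\to F(\omega^\pm_k)$ in $L^2$, for each fixed $k$ one gets $(D'_kF^{(N)})u_k\to (D'_kF)u_k$ in $L^1(\Omega)$. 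To pull the sum out of the limit I would invoke the sign condition: because $F$ is bounded, the limit summands $(D'_kF)u_k$ are all $\geq 0$, and one shows the same eventually (or after a suitable truncation of $F$) for the approximants, so that Fatou / monotone convergence applies to $\sum_k \E[(D'_kF^{(N)})u_k]$; combined with the identity $\sum_k\E[(D'_kF^{(N)})u_k]=\E[F^{(N)}\delta(u)]\to\E[F\delta(u)]<\infty$, this forces $\E[\sum_k (D'_kF)u_k]<\infty$ and equality in the limit.

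The main obstacle is exactly this interchange of limit and infinite summation on the right-hand side: we do not know a priori that $\mathbf{1}$-type functionals like $F$ lie in $\mathrm{dom}(D)$, so we cannot assume $\sum_k(D'_kF)u_k$ is integrable — that integrability is part of what must be extracted. The sign hypothesis $(D'_kF)u_k\ge 0$ is what makes this possible, turning a potentially non-summable series into one to which Fatou's lemma and a squeeze argument (lower bound from Fatou, upper bound from the convergent left-hand side) can be applied; the boundedness of $F$ is used to guarantee the termwise $L^1$ convergence $D'_kF^{(N)}\to D'_kF$ survives multiplication by $u_k$ and to control the approximants' signs. Once the series is shown to be summable and the interchange justified, equating the two limits yields $\E[F\delta(u)]=\E[\langle D'F,u\rangle_{\ell^2(\N)}]$, and lifting the simplifying assumptions on $u$ (finite support, finite chaos) by density in $L^2(\Omega\times\N)$ completes the proof.
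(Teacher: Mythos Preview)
Your approximation route is both more complicated than necessary and has a genuine gap at the limit step. The paper's proof is a direct two-line computation that never leaves $L^2(\Omega)$ and never appeals to $\mathrm{dom}(D)$: since $u_k$ is independent of $X_k$, one has $D'_k(Fu_k)=(D'_kF)u_k$, and the Stroock-type identity \eqref{stroock} (valid for \emph{every} $G\in L^2(\Omega)$, not just for $G\in\mathrm{dom}(D)$) gives
\[
\E[(D'_kF)u_k]=\E[D'_k(Fu_k)]=\E[Fu_kX_k]\qquad\text{for each }k.
\]
Summing over $k$, the left-hand side is handled by Tonelli thanks to the sign hypothesis $(D'_kF)u_k\ge 0$, and the right-hand side equals $\E[F\delta(u)]$ because $\delta(u)=\sum_k u_kX_k$ under the independence assumption. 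No approximation of $F$ is needed at all.

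The gap in your argument is precisely where you say ``one shows the same eventually (or after a suitable truncation of $F$) for the approximants'': this is the whole difficulty, and your Fatou/squeeze scheme does not supply it. Fatou's lemma for series requires nonnegativity of the \emph{approximating} terms $\E[(D'_kF^{(N)})u_k]$, which is in no way implied by $(D'_kF)u_k\ge 0$. Without that, pointwise convergence $\E[(D'_kF^{(N)})u_k]\to\E[(D'_kF)u_k]$ together with convergence of the totals $\sum_k\E[(D'_kF^{(N)})u_k]\to\E[F\delta(u)]$ does \emph{not} force $\sum_k\E[(D'_kF)u_k]=\E[F\delta(u)]$; a standard ``escaping mass'' counterexample is $a_k^{(N)}=\1_{\{k=N\}}$, where each $a_k^{(N)}\to 0$ and $\sum_k a_k^{(N)}\equiv 1$, yet $\sum_k 0=0\neq 1$. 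Chaos truncation gives no control on the sign of $D'_kF^{(N)}$, so neither the ``eventually'' nor the ``suitable truncation'' alternative is available as stated. The remedy is simply to bypass the approximation and use \eqref{stroock} termwise, as the paper does.
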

\begin{proof}
First observe that $\delta(u)=\sum_{k=1}^\infty u_k X_k$ by \eqref{deltapathwise}, due to the independence assumption. Using \eqref{stroock}, we thus find that
\begin{align*}
\E[\langle D'F,u\rangle]&=\E\Big[\sum_{k=1}^\infty (D'_kF)u_k\Big]=\sum_{k=1}^\infty \E[D'_k(F\cdot u_k)]\\
&=\sum_{k=1}^\infty\E[(F\cdot u_k)\cdot X_k]\\
&=\E[F\cdot\delta(u)]\,.
\end{align*}
The exchange of the order of summation is justified by Fubini's theorem, since $(D'_k F)u(k)\geq 0$ by assumption and since
\[\E\Big[\sum_{k=1}^\infty|F\cdot u_k X_k|\Big]\leq C\,\E\Big[\sum_{k=1}^\infty |u_k|\Big]<\infty\]
for a suitable constant $0<C<\infty$ by boundedness of $F$ and square integrability of $u$.
\end{proof}

From now on and to simplify the notation we will write $DF$ for the discrete gradient applied to a Rademacher functional $F\in L^2(\Omega)$ and interpret this as $D'F$ if $F\in L^2(\Omega)\setminus{\rm dom}(D)$.

\subsection{Stein's method for one-dimensional normal approximation.}
It is a well known fact that a real-valued random variable $Z$ follows a standard normal (or Gaussian) distribution if and only if $$\E[f'(Z)-Zf(Z)]=0$$ for all bounded, continuous and piecewise continuously differentiable functions $f:\R\to\R$ satisfying $\E|f'(Z)|<\infty$, see \cite[Lemma 2.1]{CheGolSha}. This is the so-called Stein-type characterization of the standard normal distribution and the corresponding Stein-equation reads
\begin{equation}\label{steineq}
f'(z)-zf(z)=\1(z\leq x) - P(N\leq x)\,, \quad x\in\R\,,
\end{equation}
where $N$ stands for a standard normal random variable. For a given $x$, a solution of \eqref{steineq} will be denoted by $f_x(z)$. Taking expectations in \eqref{steineq} suggests to re-write the Kolmogorov distance
$$d_K(Z,N):=\sup_{x\in\R}\big|P(Z\leq x)-P(N\leq x)\big|$$
between (the distributions of) $Z$ and $N$ as
\begin{equation}\label{steinRHS}
d_K(Z,N)\leq \sup_{f_x}\big|\E[f_x'(Z)-Zf_x(Z)]\big|\,,
\end{equation}
where the supremum runs over the class of solutions $f_x$ of \eqref{steineq}. The unique bounded solution of \eqref{steineq} is of the form $$f_x(z)=e^{z^2/2}\int\limits_{-\infty}^z\big({\bf 1}(y\leq x)-P(N\leq x)\big)e^{-y^2/2}\,dy\,,$$ see \cite[Lemma 2.2]{CheGolSha}, and satisfies the estimate $0 < f_x(z)\leq{\sqrt{2\pi}\over 4}$. Moreover $f_x$ is continuous on $\R$, infinitely differentiable on $\R\setminus\{x\}$, but not differentiable at $x$. However, interpreting the derivative of $f_x$ at $x$ as $1 - P(N\leq x)+xf(x)$ in view of \eqref{steineq}, we have
\begin{equation}\label{eq:SteinBoundFirstDerivative}
 \big|f_x'(z)\big| \leq 1\qquad{\rm for\ all}\qquad z\in\R
\end{equation}
from Lemma 2.3 in \cite{CheGolSha}. Moreover, the same result ensures that $f_x$ satisfies
\begin{equation}\label{boundsolution}
 \big|(w+u)f_x(w+u)-(w+v)f_x(w+v)\big|\leq \Big(|w|+{\sqrt{2\pi}\over 4}\Big)(|u|+|v|)
\end{equation}
for all $u,v,w\in\R$.

\subsection{Stein's method for multivariate normal approximation.} 

There is also a multivariate version of Stein's method for normal approximation. It starts with the observation that a centred random variable ${\bf Z}$ with values in $\R^d$ for some $d\geq 2$ follows a multivariate normal distribution with covariance matrix $C$ (which is a positive semi-definite $(d\times d)$-matrix) if and only if
$$\E[\langle {\bf Z},\nabla f({\bf Z})\rangle_{\R^d}-\langle C,{\rm Hess}f({\bf Z})\rangle_{H.S.}]=0$$ for all twice differentiable $f:\R^d\to\R$ with $$\E|\langle {\bf Z},\nabla f({\bf Z})\rangle_{\R^d}|+\E|\langle C,{\rm Hess}f({\bf Z})\rangle_{H.S.}|<\infty\,.$$ Here, $\langle \,\cdot\,,\,\cdot\,\rangle_{\R^d}$ is the inner product in $\R^d$, for two matrices $A$ and $B$, $\langle A,B\rangle_{H.S.}={\rm trace}(AB^T)$ is the Hilbert-Schmidt inner product and ${\rm Hess}f(z)$ stands for the Hessian matrix of $f$ at $z$. If $g:\R^d\to\R$, the multivariate Stein equation reads
\begin{equation}\label{steineqMULTI}
\langle z,\nabla f(z)\rangle_{\R^d}-\langle C,{\rm Hess}f(z)\rangle_{H.S.} = g(x) - \E[g({\bf N})] \,,
\end{equation}
where ${\bf N}$ stands for a random variable with a multivariate centred normal distribution having covariance matrix $C$. It is well known that for a given function $g$,
\begin{equation*}
f_g(x)=\int_0^1{1\over 2t}\,\E[g(\sqrt{t}\,{x}+\sqrt{1-t}\,{\bf N})-g({\bf N})]\,dt\,,\qquad x\in\R^d
\end{equation*}
is a solution of \eqref{steineqMULTI}.
To rephrase smoothness properties of $f_g$ we introduce the following notation. If $h:\R^d\to\R$, $k\in\N$ and $i_1,\ldots,i_k\in\{1,\ldots,d\}$, put
$$M_k(h):=\max_{1\leq i_1,\ldots,i_k\leq d}\,\sup_{x\in\R^d}\Big|{\partial^k\over\partial x_{i_1}\ldots\partial x_{i_k}}h(x)\Big|$$ (provided this is well defined). We notice that
\begin{equation*}
{\partial^k\over\partial x_{i_1}\ldots\partial x_{i_k}}f_g(x)=\int_0^1{1\over 2t}\,t^{k/2}\,\E\Big[{\partial^k\over\partial x_{i_1}\ldots\partial x_{i_k}}g(\sqrt{t}\,{x}+\sqrt{1-t}\,{\bf N})\Big]\,dt\,, 
\end{equation*}
whenever $g$ possesses partial derivatives up to order $k$. In particular, this shows that $M_k(g)\leq 1$ implies $M_k(f_g)\leq 2/k$, see \cite[Lemma 2.6]{CheGolSha}.

To compare (the distributions of) the $\R^d$-valued random variables ${\bf Z}$ and ${\bf N}$, and inspired by \eqref{steineqMULTI}, we use the $d_4$-distance
\begin{align}\label{eq:d4Definition}
d_4({\bf Z},{\bf N}):=\sup_{g}\big|\E g({\bf Z})-\E g({\bf N})\big|\,,
\end{align}
where the supremum runs over all $g:\R^d\to\R$ having continuous partial derivatives up to order $4$ and satisfy $M_i(g)\leq 1$ for $i\in\{1,\ldots,4\}$.



Note that convergence in the $d_4$-distance implies convergence in distribution of the involved random variables.

\section{A one-dimensional Berry-Esseen bound}\label{sec:1dimbound}


We now present the first main result of this work, a Berry-Esseen bound for Radema\-cher functionals in terms of discrete Malliavin operators. From a structural point of view, this bound is very similar to that obtained in Theorem 3.1 of \cite{EicTha}, which is not surprising as we also follow the basic idea of that paper. However, the proof and the interpretation of the involved Malliavin operators are different, because of the special structure of a Rademacher functional. This point will further be discussed in Remark \ref{NewArguments} below.

\begin{theorem}
\label{mainthm}
Let $F\in\text{dom}(D)$ with $\E[F]=0$ and let $N$ be a standard Gaussian random variable. Then
\begin{equation}\label{abstractbound}
\begin{split}
d_K(F,N)&\leq\E[|1-\langle DF,-DL^{-1}F\rangle_{\ell^2(\N)}|]+\frac{\sqrt{2\pi}}{4}\E[\langle(DF)^2,|DL^{-1}F|\rangle_{\ell^2(\N)}]\notag\\
&\quad +\E[\langle(DF)^2,|F\cdot DL^{-1}F|\rangle_{\ell^2(\N)}]+2\,\sup_{x\in\R}\E[\langle (DF)D\1_{\{F>x\}},|DL^{-1}F|\rangle_{\ell^2(\N)}]\,.
\end{split}
\end{equation}
\end{theorem}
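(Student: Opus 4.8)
The plan is to follow the now-standard Malliavin--Stein route, using the representation $F = -\delta D L^{-1} F$ and the integration-by-parts machinery, but with careful attention to the fact that $F \in \mathrm{dom}(D)$ need not imply $\1_{\{F>x\}} \in \mathrm{dom}(D)$. First I would start from \eqref{steinRHS}, so that it suffices to bound $\big|\E[f_x'(F) - F f_x(F)]\big|$ uniformly over the Stein solutions $f_x$. For the term $\E[F f_x(F)]$, I would write $F = -\delta D L^{-1} F$ (valid since $\E[F]=0$, so $F = LL^{-1}F = -\delta D L^{-1} F$) and apply the integration-by-parts formula \eqref{intbyparts} with the functional $f_x(F)$, giving $\E[F f_x(F)] = \E[\langle D(f_x(F)), -DL^{-1}F\rangle_{\ell^2(\N)}]$. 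The subtlety here is that $f_x$ is only piecewise $C^1$ (not differentiable at $x$), so $f_x(F)$ may fail to lie in $\mathrm{dom}(D)$; this is exactly why the excerpt develops Lemma \ref{Integration by parts II}, and I would either approximate $f_x$ by smooth functions or split $f_x(F) = (\text{smooth part}) + (\text{multiple of }\1_{\{F>x\}})$ and apply Lemma \ref{Integration by parts II} to the indicator part, checking the sign condition $(D'_k\1_{\{F>x\}}) u_k \ge 0$ after an appropriate choice of $u$ (restricting to coordinates where the sign of $D_k L^{-1} F$ is fixed, or decomposing accordingly).

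The heart of the computation is expanding $D_k(f_x(F))$. Because $D$ is a discrete difference operator, not a genuine derivative, one does not simply get $f_x'(F) D_k F$; instead, using $D_k(f_x(F)) = \tfrac12(f_x(F_k^+) - f_x(F_k^-))$ and a Taylor-type expansion around $F$, I would write
\[
D_k(f_x(F)) = f_x'(F)\, D_k F + R_k\,,
\]
where $R_k$ collects the second-order remainder plus the contribution of the non-smooth point $x$. For the smooth part, the remainder is controlled via \eqref{boundsolution}: with $w = F - X_k D_k F$ (the value not depending on $X_k$) and $u,v = \pm D_k F$ adjustments, one bounds the deviation of the discrete difference from $f_x'(F) D_k F$ by a quantity of order $(|F| + \tfrac{\sqrt{2\pi}}{4})(D_kF)^2$, and using $|f_x'| \le 1$ from \eqref{eq:SteinBoundFirstDerivative} another piece is of order $(D_k F)^2$; summing against $|D_k L^{-1} F|$ produces the second and third terms of \eqref{abstractbound}. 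The non-smooth contribution at $x$ is precisely where $D\1_{\{F>x\}}$ enters: the remainder contains a term proportional to $(D_kF)(D_k\1_{\{F>x\}})$, and summing $\langle (DF) D\1_{\{F>x\}}, |DL^{-1}F|\rangle_{\ell^2(\N)}$ and taking $\sup_x$ gives the last term (the factor $2$ and the $\tfrac{\sqrt{2\pi}}{4}$ tracking the bound $f_x \le \tfrac{\sqrt{2\pi}}{4}$). Finally, collecting the leading term $\E[f_x'(F)\langle DF, -DL^{-1}F\rangle_{\ell^2(\N)}]$ against $\E[f_x'(F)]$ and using $|f_x'| \le 1$ yields $\E[|1 - \langle DF, -DL^{-1}F\rangle_{\ell^2(\N)}|]$, the first term.

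The main obstacle, and the step requiring genuinely new work (as the excerpt flags), is the rigorous treatment of the indicator $\1_{\{F>x\}}$: justifying the integration-by-parts step when $f_x(F) \notin \mathrm{dom}(D)$, and extracting the discrete-difference remainder at the kink $z = x$ in a way that is both sign-controlled (to invoke Lemma \ref{Integration by parts II} and Fubini) and quantitatively sharp. I would handle this by first proving the bound for smooth bounded approximations $f$ of $f_x$, controlling $D_k(f(F))$ via Taylor expansion and \eqref{eq:SteinBoundFirstDerivative}, \eqref{boundsolution}, then passing to the limit while isolating the jump term as the $\sup_x \E[\langle (DF) D\1_{\{F>x\}}, |DL^{-1}F|\rangle_{\ell^2(\N)}]$ contribution; the sign condition needed for Lemma \ref{Integration by parts II} is arranged by treating the positive and negative parts of $D_k L^{-1} F$ separately, which does not worsen the constants since $|DL^{-1}F|$ already appears in the final bound. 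A secondary, more routine point is verifying all the requisite integrability (that $u = -DL^{-1}F$ times the relevant functionals lies in $\mathrm{dom}(\delta)$ or satisfies the hypotheses of Lemma \ref{Integration by parts II}), which follows from $F \in \mathrm{dom}(D)$, boundedness of $f_x$ and $f_x'$, and the isometry \eqref{Isometry property}.
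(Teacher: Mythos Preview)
Your overall strategy is correct and close to the paper's, but the mechanism you sketch for handling the remainder differs from what the authors actually do, and your version introduces an unnecessary difficulty. You propose a ``Taylor-type expansion'' of $f_x(F_k^\pm)$ around $F$, with the second-order remainder controlled via \eqref{boundsolution}. The problem is that $f_x$ is not twice differentiable at $x$, so a second-order Taylor expansion is not directly available; you acknowledge this and propose smoothing or splitting $f_x$ into a smooth part plus a multiple of an indicator, but this makes the argument considerably more delicate than it needs to be.

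The paper instead uses only the first derivative of $f_x$, via the fundamental theorem of calculus:
\[
D_k f_x(F) = \tfrac12\int_{F_k^--F}^{F_k^+-F} f_x'(F+t)\,dt
= f_x'(F)\,D_kF + \tfrac12\int_{F_k^--F}^{F_k^+-F}\big(f_x'(F+t)-f_x'(F)\big)\,dt\,.
\]
The key trick is then to substitute the Stein equation itself into the integrand: since $f_x'(z) = z f_x(z) + \1_{\{z\le x\}} - P(N\le x)$, the difference $f_x'(F+t)-f_x'(F)$ splits exactly into a ``smooth'' piece $(F+t)f_x(F+t)-Ff_x(F)$, which is controlled by \eqref{boundsolution} and produces the $(|F|+\tfrac{\sqrt{2\pi}}{4})(D_kF)^2$ contribution, and an indicator piece $\1_{\{F+t\le x\}}-\1_{\{F\le x\}}$, whose integral is bounded by a direct case analysis (over the signs of $X_k$ and $D_kF$) as $4\,D_kF\cdot D_k\1_{\{F>x\}}$. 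No smoothing of $f_x$ is needed, and the indicator term $D_k\1_{\{F>x\}}$ emerges naturally from this decomposition rather than from a separate integration-by-parts argument applied to the indicator itself.

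Regarding integration by parts: since $f_x$ is Lipschitz with $\|f_x'\|_\infty\le 1$, one has $|D_k f_x(F)|\le |D_kF|$, so $f_x(F)\in\mathrm{dom}(D)$ by Lemma~\ref{DandDprime} whenever $F\in\mathrm{dom}(D)$; thus the ordinary integration-by-parts formula \eqref{intbyparts} already applies to $\E[f_x(F)\,\delta(-DL^{-1}F)]$, and the extended Lemma~\ref{Integration by parts II} (with its sign hypothesis) is not strictly needed at this step. Your plan to split $DL^{-1}F$ into positive and negative parts to force the sign condition is therefore unnecessary here.
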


\begin{remark}
Our result should be compared with the estimate from \cite{ReiPec}. For a Rademacher functional $F\in\text{dom}(D)$ with $\E[F]=0$, define $B_1(F)$ and $B_2(F)$ by
\begin{align*}
B_1(F) &:= \E[|1-\langle DF,-DL^{-1}F\rangle_{\ell^2(\N)}|]\,,\\
B_2(F) &:= {20\over 3}\E[\langle |DF|^3,|DL^{-1}F|\rangle_{\ell^2(\N)}]\,.
\end{align*}
Then Theorem 3.1 in \cite{ReiPec} says that 
\begin{equation}\label{eq:d3EstimateNRP}
\big|\E[g(F)]-\E[g(N)]\big|\leq \min\{4\|g'\|_\infty,\|g''\|_\infty\}\,B_1(F)+\|g''\|_\infty\,B_2(F)\,,
\end{equation}
where $N$ is a standard Gaussian random variable and $g:\R\to\R$ is a twice differentiable function with bounded derivatives of order one and two. Moreover, using a standard smoothing argument, this has been extended in Corollary 3.6 ibidem to an estimate for the Wasserstein distance:
\begin{equation}\label{eq:dWEstimate}
\begin{split}
d_W(F,N) &:= \sup_{g\in{\rm Lip}_1}|\E[g(F)]-\E[g(N)]|\\
&\leq \sqrt{2(B_1(F)+B_2(F))(5+\E[|F|])}\,,
\end{split}
\end{equation}
where the supremum runs over all Lipschitz functions $g:\R\to\R$ with Lipschitz constant bounded by $1$.
In view of the well-known relation $d_K(F,N)\leq 2\sqrt{d_W(F,N)}$ between the Kolmogorov and the Wasserstein distance, this leads in general to a suboptimal estimate for $d_K(F,N)$. Even \eqref{eq:dWEstimate} is suboptimal compared to \eqref{eq:d3EstimateNRP}. Our bound provided in Theorem \ref{mainthm} resolves this problem. Moreover, in our applications below we will see that the Kolmogorov distance will be of the same order of magnitude as \eqref{eq:d3EstimateNRP}, improving thereby also \eqref{eq:dWEstimate}. 
\end{remark}

\begin{proof}[Proof of Theorem \ref{mainthm}.]
Let $f_x$ be the solution of the Stein equation \eqref{steineq}. In view of \eqref{steinRHS} we have to bound the quantity
\[\E[f_x'(F)-Ff_x(F)]\]
uniformly in $x$. For the following calculation we suppress the dependence on $x$ of the Stein solution and use the abbreviation $f:=f_x$. Using the relation $\delta D=-L$ from \eqref{eq:L=-dD} and the integration-by-parts formula in Lemma \ref{Integration by parts II} we get
\begin{equation}\label{rightsidestein}
\begin{split}
\E[f'(F)-F f(F)] &=\E[f'(F)-\delta(-DL^{-1} F)f(F)]\\
&=\E[f'(F)-\langle Df(F),-DL^{-1}F\rangle_{\ell^2(\N)}]\,.
\end{split}
\end{equation}
We now rewrite the scalar product on the right hand side of \eqref{rightsidestein}. To this end, we find another representation of the gradient $Df(F)$, using the fundamental theorem of calculus. Note that this only makes use of the first derivative of $f$, which is in contrast to the approach taken in \cite{ReiPec}, where an approximate chain rule for $Df(F)$ is used and higher derivatives of $f$ are involved. We get for $k\in\N$,
\begin{align*}
D_kf(F)&=\frac{1}{2}\left((f(F))_k^+-(f(F))_k^-\right)=\frac{1}{2}\left((f(F_k^+))-(f(F_k^-))\right)\\
&=\frac{1}{2}\int_{F_k^--F}^{F_k^+-F}f'(F+t)~dt\\
&=\frac{1}{2}\Big(\int_{F_k^--F}^{F_k^+-F}\big(f'(F+t)-f'(F)\big)~dt+\int_{F_k^--F}^{F_k^+-F}f'(F)~dt\Big)\\
&=\frac{1}{2}\Big(\int_{F_k^--F}^{F_k^+-F}\big(f'(F+t)-f'(F)\big)~dt+f'(F)\cdot(F_k^+-F_k^-)\Big)\,.
\end{align*}
Combining this with \eqref{rightsidestein} we get
\begin{equation}\label{steinNachHDI}
\begin{split}
\E[f'(F)-Ff(F)]&=\E[f'(F)-\langle f'(F)DF,-DL^{-1}F\rangle_{\ell^2(\N)}]\\
&\quad -\E\Big[\Big\langle\frac{1}{2}\int_{F_{(\cdot)}^--F}^{F_{(\cdot)}^+-F}\big(f'(F+t)-f'(F)\big)~dt,-DL^{-1}F\Big\rangle_{\ell^2(\N)}\Big].
\end{split}
\end{equation}
Since $f$ is a solution of the Stein equation, we have for all $t\in\R$,
\[f'(F+t)=(F+t)f(F+t)+\1_{\{F+t\leq x\}}-P(N\leq x).\]
Thus,
\begin{equation}\label{I1+I2}
\begin{split}
\int_{F_k^--F}^{F_k^+-F}\big(f'(F+t)-f'(F)\big)~dt&=\underbrace{\int_{F_k^--F}^{F_k^+-F}\big((F+t)f(F+t)-F f(F)\big)~dt}_{=:I_1(k)}\\
&\qquad +\underbrace{\int_{F_k^--F}^{F_k^+-F}\big(\1_{\{F+t\leq x\}}-\1_{\{F\leq x\}}\big)~dt}_{=:I_2(k)}.
\end{split}
\end{equation}
One can now make use of the bound \eqref{boundsolution}. Using the identities
\begin{equation}\label{difference}
F_k^\pm-F=\pm2D_kF\1_{\{X_k=\mp 1\}}\,,
\end{equation}
 we get
\begin{align}
|I_1(k)|&=\Big|\int_{F_k^--F}^{F_k^+-F}\big((F+t)f(F+t)-F f(F)\big)~dt\Big|\notag\\
&\leq\int\limits_{\min{\{F_k^--F,F_k^+-F\}}}^{\max{\{F_k^--F,F_k^+-F\}}}(|F|+\frac{\sqrt{2\pi}}{4})|t|~dt\notag\\
&=\int\limits_{2\min{\{D_kF\1_{\{X_k=-1\}},-D_kF\1_{\{X_k=1\}}\}}}^{2\max{\{D_kF\1_{\{X_k=-1\}},-D_kF\1_{\{X_k=1\}}\}}}(|F|+\frac{\sqrt{2\pi}}{4})|t|~dt\notag\\
&=\frac{(2D_kF)^2}{2}\,\Big(|F|+\frac{\sqrt{2\pi}}{4}\Big)\,.\label{I1}
\end{align}
For an evaluation of the term $I_2(k)$ we first define
\begin{align*}
I_{+/+}(k)&=\1_{\{X_k=1,D_kF\geq 0\}}\cdot I_2(k)\,,\quad
I_{+/-}(k)=\1_{\{X_k=1,D_kF< 0\}}\cdot I_2(k),\\
I_{-/+}(k)&=\1_{\{X_k=-1,D_kF\geq 0\}}\cdot I_2(k)\,,\ \,
I_{-/-}(k)=\1_{\{X_k=-1,D_kF< 0\}}\cdot I_2(k).
\end{align*}
Using \eqref{difference} we compute $I_{+/+}$ as follows:
\begin{align*}
\left|I_{+/+}\right|&=\Big|\1_{\{X_k=1,D_kF\geq 0\}}\cdot\int_{F_k^--F}^{F_k^+-F}\big(\1_{\{F+t\leq x\}}-\1_{\{F\leq x\}}\big)~dt\Big|\\
&=\1_{\{X_k=1,D_kF\geq 0\}}\cdot\Big|\int_{-2D_kF}^{0}\big(\!\!\!\!\!\underbrace{\1_{\{F+t\leq x\}}}_{\leq \1_{\{F-2D_kF\leq x\}}}-\1_{\{F\leq x\}}\big)~dt\Big|\\
&\leq\1_{\{X_k=1,D_kF\geq 0\}}\cdot \left|\,2\,D_kF\cdot(\1_{\{F-2D_kF\leq x\}}-\1_{\{F\leq x\}})\right|\\
&=\1_{\{X_k=1,D_kF\geq 0\}}\cdot \left|\,2\,D_kF\cdot(\1_{\{F_k^-\leq x\}}-\1_{\{F_k^+\leq x\}})\right|\\
&=\1_{\{X_k=1,D_kF\geq 0\}}\cdot \left|\,2\,D_kF\cdot(\1_{\{F_k^+> x\}}-\1_{\{F_k^-> x\}})\right|\\
&=\1_{\{X_k=1,D_kF\geq 0\}}\cdot \,4\,D_kF\cdot D_k\1_{\{F>x\}}\,.
\end{align*}
Similarly, we get the bounds
\begin{align*}
\left|I_{+/-}(k)\right|&\leq\1_{\{X_k=1,D_kF< 0\}}\cdot \,4\,D_kF\cdot D_k\1_{\{F>x\}}\,,\\
\left|I_{-/+}(k)\right|&\leq\1_{\{X_k=-1,D_kF\geq 0\}}\cdot\, 4\,D_kF\cdot D_k\1_{\{F>x\}}\,,\\
\left|I_{-/-}(k)\right|&\leq\1_{\{X_k=-1,D_kF< 0\}}\cdot \,4\,D_kF\cdot D_k\1_{\{F>x\}}\,.
\end{align*}
Since
\[I_2(k)=I_{+/+}(k)+I_{+/-}(k)+I_{-/+}(k)+I_{-/+}(k)\,,\]
we arrive at 
\begin{equation}
\label{I2}
|I_2(k)|\leq 4\,D_kF\cdot D_k\1_{\{F>x\}}\,.
\end{equation}
Using our bounds $\eqref{I1}$ and $\eqref{I2}$ for $I_1:=(I_1(k))_{k\in\N}$ and $I_2:=(I_2(k))_{k\in\N}$ and the identities $\eqref{steinNachHDI}$ and $\eqref{I1+I2}$ we conclude that
\begin{align*}
\left|\E[f'(F)-F f(F)]\right|&\leq \E\left|1-\langle DF,-DL^{-1}F\rangle_{\ell^2(\N)}\right|+\frac{1}{2}\E[\langle|I_1|+|I_2|,|DL^{-1}F|\rangle_{\ell^2(\N)}]\\
&\leq\E\left|1-\langle DF,-DL^{-1}F\rangle_{\ell^2(\N)}\right|+{\sqrt{2\pi}\over 4}\,\E[\langle (DF)^2,|DL^{-1}F|\rangle_{\ell^2(\N)}]\\
&\quad +\E[\langle(DF)^2,|F\cdot DL^{-1}F|\rangle_{\ell^2(\N)}]+\E[\langle(2DF)\1_{\{F>x\}},|DL^{-1}F|\rangle_{\ell^2(\N)}].
\end{align*}
The proof is completed by taking the supremum over all $x\in\R$.
\end{proof}


Let us finally introduce a simplification of the terms arising in Theorem  \ref{mainthm}, which will be used below. 
\begin{corollary}
\label{MainCorollary}
Let $F \in dom(D)$. Then
\begin{enumerate}[(i)]
\item $\E \big[ \big| 1 - \langle DF, -DL^{-1}F \rangle_{\ell^2(\N)} \big| \big] \leq \big( \E \big[ 
	( 1 - \langle DF, -DL^{-1}F \rangle_{\ell^2(\N)} )^2 \big] \big)^\frac{1}{2}$

\item $\E \big[ \langle (DF)^2, |F \cdot DL^{-1}F| \rangle_{\ell^2(\N)} \big] + 
	\frac{\sqrt{2\pi}}{4} \E \left[ \langle (DF)^2, |DL^{-1}F| \rangle_{\ell^2(\N)} \right]\\
	\leq \big( \E \big[ \langle (DF)^2, (DL^{-1}F)^2 \rangle_{\ell^2(\N)} \big] 
	\big)^\frac{1}{2} \big( \E \big[ \|DF\|_{\ell^2(\N)}^4 \big] \big)^\frac{1}{4} \big( 
	\big( \E [ F^4 ] \big)^\frac{1}{4} + 1 \big)$,
\end{enumerate}
provided all occurring expectations on the right hand sides of the above inequalities are well defined.
\end{corollary}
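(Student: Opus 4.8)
The plan is to prove Corollary \ref{MainCorollary} as a pair of essentially routine applications of the Cauchy--Schwarz inequality (in the appropriate $L^2$ spaces), starting from the bound in Theorem \ref{mainthm} and the elementary observation that $D\1_{\{F>x\}}$ is pointwise dominated, up to a factor, by $|DF|$ divided by a suitable small-ball quantity --- but for part (ii) of the corollary the indicator term has already been dropped, so the argument is purely about repackaging the remaining three summands.

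For part (i), the statement is simply Jensen's inequality (equivalently Cauchy--Schwarz with the constant function $1$): for any random variable $Y \in L^2(\Omega)$ one has $\E[|Y|] \le (\E[Y^2])^{1/2}$, applied to $Y = 1 - \langle DF, -DL^{-1}F\rangle_{\ell^2(\N)}$. No structure of the discrete Malliavin operators is needed here.

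For part (ii), first I would combine the two summands on the left: since $|F \cdot DL^{-1}F| + \tfrac{\sqrt{2\pi}}{4}|DL^{-1}F| = |DL^{-1}F|\bigl(|F| + \tfrac{\sqrt{2\pi}}{4}\bigr) \le |DL^{-1}F|\bigl(|F| + 1\bigr)$ pointwise on $\N$ (using $\sqrt{2\pi}/4 < 1$), the left-hand side is at most $\E\bigl[\langle (DF)^2, |DL^{-1}F|(|F|+1)\rangle_{\ell^2(\N)}\bigr]$. Writing the inner product as a sum over $k$ and inserting the factorization $(D_kF)^2 |D_kL^{-1}F| (|F|+1) = \bigl(|D_kF||D_kL^{-1}F|\bigr) \cdot \bigl(|D_kF|(|F|+1)\bigr)$, I would apply the Cauchy--Schwarz inequality with respect to the measure $P \otimes \kappa$ on $\Omega \times \N$ to obtain the bound
\[
\Bigl(\E\bigl[\langle (DF)^2, (DL^{-1}F)^2\rangle_{\ell^2(\N)}\bigr]\Bigr)^{1/2}\Bigl(\E\bigl[(|F|+1)^2 \|DF\|_{\ell^2(\N)}^2\bigr]\Bigr)^{1/2}.
\]
The first factor is already in the desired form. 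For the second, I would first use $(|F|+1)^2$ is not pulled out directly; instead apply Cauchy--Schwarz again on $\Omega$ to the product $(|F|+1)^2 \cdot \|DF\|_{\ell^2(\N)}^2$, giving $\bigl(\E[(|F|+1)^4]\bigr)^{1/2}\bigl(\E[\|DF\|_{\ell^2(\N)}^4]\bigr)^{1/2}$, whose second half yields $\bigl(\E[\|DF\|^4]\bigr)^{1/4}$ after taking the outer square root. Finally I would bound $\bigl(\E[(|F|+1)^4]\bigr)^{1/4} \le (\E[F^4])^{1/4} + 1$ by the triangle inequality for the $L^4$-norm (Minkowski). Assembling the factors reproduces exactly the right-hand side of (ii).

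The only mild subtlety --- not really an obstacle --- is bookkeeping the three-layered use of Cauchy--Schwarz/Minkowski so that the exponents $\tfrac14$ land where claimed, and making sure the "provided all occurring expectations are well defined'' hypothesis is invoked so that every application is legitimate; no new idea beyond Theorem \ref{mainthm} and standard inequalities is required.
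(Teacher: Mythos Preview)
Your proposal is correct and follows essentially the same route as the paper. For (i) the paper also just invokes Cauchy--Schwarz; for (ii) the paper combines the two terms via $\sqrt{2\pi}/4<1$, factors $(DF)^2|DL^{-1}F|(1+|F|)$ as $(DF)|DL^{-1}F|\cdot (DF)(1+|F|)$, applies Cauchy--Schwarz first in $\ell^2(\N)$ and then in $\E$ (whereas you do it in one step on $L^2(\Omega\times\N,P\otimes\kappa)$, which gives the same bound), and finishes with Cauchy--Schwarz plus Minkowski on the $(1+|F|)^2\|DF\|_{\ell^2(\N)}^2$ factor exactly as you describe. The opening remark about $D\1_{\{F>x\}}$ is extraneous here, as you yourself note.
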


\begin{proof}
An application of the Cauchy-Schwarz inequality yields (i). Using the Cauchy-Schwarz inequality again with respect to $\E[\,\cdot\,]$ and $\langle\,\cdot\,,\,\cdot\,\rangle_{\ell^2(\N)}$ we find
\begin{align}
&\quad\frac{\sqrt{2\pi}}{4}\E[\langle(DF)^2,|DL^{-1}F|\rangle_{\ell^2(\N)}]+\E[\langle(DF)^2,|F\cdot DL^{-1}F|\rangle_{\ell^2(\N)}]\notag\\
&\leq\E[\langle(DF)^2,(1+|F|)|DL^{-1}F|\rangle_{\ell^2(\N)}]\notag\\
&=\E[\langle(DF)|DL^{-1}F|,(DF)(1+|F|)\rangle_{\ell^2(\N)}]\notag\\
&\leq\E\left[\|(DF)(DL^{-1}F)\|_{\ell^2(\N)}\cdot\|(DF)(1+|F|)\|_{\ell^2(\N)}\right]\notag\\
&\leq\left(\E\left[\langle(DF)^2,(DL^{-1}F)^2\rangle_{\ell^2(\N)}\right]\right)^{\frac{1}{2}}\cdot\big(\E\big[(1+|F|)^2\|DF\|_{\ell^2(\N)}^2\big]\big)^{\frac{1}{2}}\label{mink}.
\end{align}
Now, (ii) is obtained by applying both the Cauchy-Schwarz and the Minkowski inequality to the second term in \eqref{mink}:
\begin{align*}
\big(\E\big[(1+|F|)^2\|DF\|_{\ell^2(\N)}^2\big]\big)^{\frac{1}{2}}
&\leq\E[(1+|F|)^4]^{\frac{1}{4}}\cdot\E[\|DF\|_{\ell^2(\N)}^4]^{\frac{1}{4}}\\
&=\|1+|F|\|_{L^4(\Omega)}\cdot\E[\|DF\|_{\ell^2(\N)}^4]^{\frac{1}{4}}\\
&\leq (1+\E[F^4]^{\frac{1}{4}})\cdot\E[\|DF\|_{\ell^2(\N)}^4]^{\frac{1}{4}}\,.
\end{align*}
This completes the proof.
\end{proof}

\begin{remark}\label{NewArguments}
Let us briefly discuss the novelties of the proof of Theorem \ref{mainthm} compared with the existing literature (such as \cite{ReiPec}). The common thread underlying the Malliavin-Stein approach (also on the Gaussian \cite{NouPecPTRF09} or Poisson space \cite{PecSolTaqUtz}) is the usage of an integration-by-parts formula. For smooth test functions, this is then combined with a Taylor expansion, which leads to an appropriate chain rule. Here, we could not build on the existing so-called approximate chain rule from \cite{ReiPec} and instead followed the idea of \cite{EicTha} by expressing the remainder term in integral form. Handling this term required new estimates, since the Malliavin operator $D$ has a different representation and follows different computation rules in case of Rademacher sequences. 
\end{remark}

\section{Explicit bounds for discrete multiple stochastic integrals}\label{sec:multipleintegrals}

\subsection{The first chaos}

In the present section we apply our abstract bound from Theorem \ref{mainthm} dealing with the Kolmogorov distance between the distribution of a general Rademacher functional $F\in{\rm dom}(D)$ and the standard normal distribution in the case that $F$ belongs to the first Rademacher chaos. This way, we establish a connection to small ball probabilities. So, let $F=J_1(f)$ for some $f\in\ell^2(\N)$, i.e., $F=\sum_{i=1}^\infty f(i)X_i$. Such functionals are known as Rademacher averages in the literature. 

\begin{theorem} 
\label{berryJ1}
Let $F=\sum_{i=1}^\infty a_iX_i$ for $(a_i)_{i\in\N}\in\ell^2(\N)$ such that $\E[F^2]=\sum_{i=1}^\infty a_i^2=1$ and let $N$ be a standard Gaussian random variable. Then
\begin{equation}
d_K(F,N)\leq 2\sum_{i=1}^\infty |a_i|^3+\sup_{x\in\R}\,\sum_{k=1}^\infty a_k^2\cdot P\Big(x-|a_k|<\sum_{\stackrel{i=1}{i\neq k}}^\infty a_iX_i\leq x+|a_k|\Big).\label{smallball}
\end{equation}
\end{theorem}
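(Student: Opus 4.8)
The plan is to specialize the abstract Berry-Esseen bound of Theorem~\ref{mainthm} to $F=J_1(f)$ with $f=(a_i)_{i\in\N}$ and simplify each of the four terms. First I would record the relevant Malliavin operators: since $F=\sum_{i}a_iX_i$ is in the first chaos, one has $L^{-1}F=-F$, hence $-DL^{-1}F=DF$, and moreover $D_kF=a_k$ for every $k\in\N$ (this is deterministic). Thus $\langle DF,-DL^{-1}F\rangle_{\ell^2(\N)}=\sum_k a_k^2=\E[F^2]=1$, so the first term in \eqref{abstractbound} vanishes identically. This is the same mechanism that makes the first-chaos case trivial for smooth distances; the whole point of the theorem is that the fourth (Kolmogorov-specific) term survives.

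Next I would handle the second and third terms together. Since $D_kF=a_k$ and $|DL^{-1}F|_k=|a_k|$, we get $\langle (DF)^2,|DL^{-1}F|\rangle_{\ell^2(\N)}=\sum_k a_k^2|a_k|=\sum_k|a_k|^3$ (deterministic), and $\langle(DF)^2,|F\cdot DL^{-1}F|\rangle_{\ell^2(\N)}=|F|\sum_k|a_k|^3$, whose expectation is $\E|F|\cdot\sum_k|a_k|^3$. Bounding $\E|F|\le(\E[F^2])^{1/2}=1$ and combining with the $\tfrac{\sqrt{2\pi}}{4}$-term gives a contribution at most $(1+\tfrac{\sqrt{2\pi}}4)\sum_k|a_k|^3\le 2\sum_k|a_k|^3$, using $1+\tfrac{\sqrt{2\pi}}4<2$. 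This accounts for the first summand on the right-hand side of \eqref{smallball}.

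The main work is the fourth term $2\sup_x\E[\langle(DF)D\1_{\{F>x\}},|DL^{-1}F|\rangle_{\ell^2(\N)}]$. Here I would fix $x$ and $k$, write $F=a_kX_k+R_k$ with $R_k:=\sum_{i\ne k}a_iX_i$ independent of $X_k$, and compute $D_k\1_{\{F>x\}}=\tfrac12(\1_{\{F_k^+>x\}}-\1_{\{F_k^->x\}})=\tfrac12(\1_{\{R_k+a_k>x\}}-\1_{\{R_k-a_k>x\}})$. Multiplying by $D_kF=a_k$ gives $a_k D_k\1_{\{F>x\}}=\tfrac{a_k}2(\1_{\{R_k>x-a_k\}}-\1_{\{R_k>x+a_k\}})$; regardless of the sign of $a_k$ this equals $\tfrac{|a_k|}2\1_{\{x-|a_k|<R_k\le x+|a_k|\}}$ (one checks the two sign cases, using that the indicator difference telescopes to an indicator of an interval of length $2|a_k|$). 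Multiplying by the remaining factor $|a_k|$ and taking expectations yields $\E[a_kD_k\1_{\{F>x\}}\,|a_k|]=\tfrac{a_k^2}{2}P(x-|a_k|<R_k\le x+|a_k|)$. Summing over $k$ and inserting the prefactor $2$ produces exactly $\sup_x\sum_k a_k^2\,P(x-|a_k|<\sum_{i\ne k}a_iX_i\le x+|a_k|)$, the second summand in \eqref{smallball}.

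The step I expect to require the most care is the sign bookkeeping in the fourth term: one must verify that the quantity $a_k(\1_{\{F_k^+>x\}}-\1_{\{F_k^->x\}})$ is nonnegative and collapses to $|a_k|\1_{\{x-|a_k|<R_k\le x+|a_k|\}}$ for both signs of $a_k$, and one should make sure the interval endpoints ($<$ versus $\le$) come out matching the statement. A minor secondary point is justifying the interchange of expectation and the infinite $\ell^2(\N)$-sum (Fubini/Tonelli, valid since all terms after taking absolute values are nonnegative and $\sum_k a_k^2<\infty$) and checking that $J_1(f)\in\mathrm{dom}(D)$ so that Theorem~\ref{mainthm} applies, which follows from $\sum_k a_k^2<\infty$ via \eqref{domD}.
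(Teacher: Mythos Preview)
Your proposal is correct and follows essentially the same route as the paper: both specialize Theorem~\ref{mainthm} to $F=J_1(f)$, use $D_kF=-D_kL^{-1}F=a_k$ to make the first term vanish, bound the second and third terms by $\sum_i|a_i|^3$ each (you do this a bit more sharply via $1+\tfrac{\sqrt{2\pi}}{4}<2$, but the paper's cruder $A_2\le\sum|a_i|^3$ gives the same final constant), and then unpack $D_k\1_{\{F>x\}}$ exactly as you do, arriving at the small-ball term after the same sign case analysis.
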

\begin{proof}
We first introduce abbreviations for the four terms appearing on the right hand side of the bound in Theorem \ref{mainthm}:
\begin{align}
A_1(F)&:=\E[|1-\langle DF,-DL^{-1}F\rangle_{\ell^2{\N}}|]\,,\label{eq:A1original}\\
A_2(F)&:=\frac{\sqrt{2\pi}}{4}\E[\langle(DF)^2,|DL^{-1}F|\rangle_{\ell^2(\N)}]\,,\label{eq:A2original}\\
A_3(F)&:=\E[\langle(DF)^2,|F\cdot DL^{-1}F|\rangle_{\ell^2(\N)}]\,,\label{eq:A3original}\\
A_4(F)&:=2\sup_{x\in\R}\E[\langle (DF)D\1_{\{F>x\}},|DL^{-1}F|\rangle_{\ell^2(\N)}]\,.\label{eq:A4original}
\end{align}
In our case we have that $D_kF=-D_kL^{-1}F=a_k$ for all $k\in\N$ and thus get the following bounds for $A_1(F), A_2(F)$ and $A_3(F)$:
\begin{equation}\label{A1}
\begin{split}
A_1(F)&=\Big|1-\sum_{i=1}^\infty a_i^2\Big|=0\,,\qquad A_2(F)\leq \sum_{i=1}^\infty |a_i|^3\,,\\
A_3(F)&=\E\Big[\sum_{i=1}^\infty |a_i|^3\cdot\Big|\sum_{i=1}^\infty a_i X_i\Big|\Big]\leq\Big(\sum_{i=1}^\infty |a_i|^3\Big)\cdot\sqrt{\E\Big[\Big(\sum_{i=1}^\infty a_iX_i\Big)^2\Big]}=\sum_{i=1}^\infty |a_i|^3\,.
\end{split}
\end{equation}
For the term $A_4(F)$ we first observe that
\begin{align*}
D_k\1\Big(\sum_{i=1}^\infty a_iX_i>x\Big)&=\frac{1}{2}\Big(\1\Big(\sum_{\stackrel{i=1}{i\neq k}}^\infty a_iX_i>x-a_k\Big)-\1\Big(\sum_{\stackrel{i=1}{i\neq k}}^\infty a_iX_i>x+a_k\Big)\Big)\,.
\end{align*}
Thus,
\begin{align}
 A_4(F)
&=2\sup_{x\in\R}\,\E\Big[\sum_{k=1}^\infty D_kF\cdot D_k\1(F>x)|D_kL^{-1}F|\Big]\notag\\
&=\sup_{x\in\R}\,\E\Big[\sum_{k=1}^\infty a_k\cdot|a_k|\Big(\1\Big(\sum_{\stackrel{i=1}{i\neq k}}^\infty a_iX_i>x-a_k\Big)-\1\Big(\sum_{\stackrel{i=1}{i\neq k}}^\infty a_iX_i>x+a_k\Big)\Big)\Big]\notag\\
&=\sup_{x\in\R}\,\E\Big[\sum_{k=1}^\infty a_k^2 \1\Big(x-|a_k|<\sum_{\stackrel{i=1}{i\neq k}}^\infty a_iX_i\leq x+|a_k|\Big)\Big]\notag\\
&=\sup_{x\in\R}\,\sum_{k=1}^\infty a_k^2\cdot P\Big(x-|a_k|<\sum_{\stackrel{i=1}{i\neq k}}^\infty a_iX_i\leq x+|a_k|\Big)\label{A4}.
\end{align}
Putting together \eqref{A1} and \eqref{A4} yields the assertion.
\end{proof}
\begin{remark}
It is interesting to see that the bound \eqref{smallball} in Theorem \ref{berryJ1} involves quantities which are known as small ball probabilities in the literature. More precisely, if $\xi_1,\dots,\xi_n$ are i.i.d.\ real-valued random variables and $a_1,\dots,a_n$ real numbers, then a quantity of the type
\[\sup_{x\in\R}P\Big(\Big|\sum_{i=1}^n a_i\xi_i-x\Big|\leq\varepsilon\Big)\,,\qquad\varepsilon>0\,,\] 
is what is usually called a small ball probability and can be considered as a kind of measure of anti-concentration for the partial sum $\sum_{i=1}^n a_i\xi_i$. The authors in \cite{RudVer} (see also \cite{LitPajRud}) found a bound for these small ball probabilities using the classical Berry-Esseen theorem for i.i.d.\ random variables with finite third moment. In our set-up, i.e., if $X_1,\dots,X_n$ is a sequence of independent Rademacher random variables and if $a_1,\dots,a_n\in\R$, Corollary 2.9 in \cite{RudVer} says that
\begin{equation}
\sup_{x\in\R}P\Big(\Big|\sum_{i=1}^n a_iX_i-x\Big|\leq\varepsilon\Big)\leq\sqrt{\frac{2}{\pi}}\,\varepsilon+C\sum_{i=1}^n|a_i|^3\,,\qquad\varepsilon>0\,,\label{smbound}
\end{equation}
where $0<C<\infty$ is an absolute constant. Using \eqref{smbound} one can see that Theorem \ref{berryJ1} reproduces the correct order for the Berry-Esseen bound for the normal approximation of a finite sum $F= \sum_{i=1}^n a_iX_i$, which is $O(\sum_{i=1}^n|a_i|^3)$. Our Theorem \ref{berryJ1} can be interpreted as an inverse of \eqref{smbound}, as it provides a Berry-Esseen bound for $F$ in terms of small ball probabilities. Also note that our Theorem  \ref{berryJ1} goes beyond this set-up, since it allows $F$ to depend on an infinite sequence of independent Rademacher variables.
\end{remark}

\subsection{The case $q\geq 2$}

Our main result in this section is an estimate for the Kolmogorov distance of a discrete multiple stochastic integral of arbitrary order $q\geq 2$ and a standard Gaussian random variable in terms of contraction norms. We present two estimates, which will separately be used below. We emphasize that they are the discrete analogues of similar results for Gaussian or Poisson multiple stochastic integrals, see \cite{EicTha,NouPecPTRF09}.

\begin{theorem} 
\label{BerryEsseenMultipleIntegrals}
Let $F = J_q(f)$ for a fixed integer $q \geq 2$ and a symmetric kernel $f \in \ell^2_0(\N)^{\circ q}$. Assume that $\|(f\star_r^r f)\1_{\Delta_{2(q-r)}}\|_{\ell^2(\N)^{\otimes 2(q-r)}} < 1$, for all $r=1, \dotsc, q-1$. Furthermore, let $N$ be a standard Gaussian random variable. Then
\begin{align*}
d_K(F, N) &\leq C_1\max\big\{|1-q!\lnorm{2}{q}{f}^2|,\max_{r=1,\ldots,q-1}\{\|(f\star_r^r f)\1_{\Delta_{2(q-r)}}\|_{\ell^2(\N)^{\otimes 2(q-r)}}\},\\
&\qquad\qquad\qquad\qquad\max_{r=1,\ldots,q}\{\|f\star_r^{r-1} f\|_{\ell^2(\N)^{\otimes(2(q-r)+1)}}\}\big\}\\
&\leq C_2\max\big\{|1-q!\lnorm{2}{q}{f}^2|,\max_{r=1,\ldots,q-1}\{\|f\star_r^r f\|_{\ell^2(\N)^{\otimes 2(q-r)}}\}\big\}
\end{align*}
with universal constants $0<C_1,C_2<\infty$ only depending on $q$.
\end{theorem}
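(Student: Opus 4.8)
The plan is to feed $F = J_q(f)$ into the abstract Kolmogorov bound of Theorem \ref{mainthm} and then estimate each of the four terms $A_1(F), A_2(F), A_3(F), A_4(F)$ (in the notation of \eqref{eq:A1original}--\eqref{eq:A4original}) by contraction norms, using the multiplication formula \eqref{Multiplication formula}, the isometry relation \eqref{Isometric relation}, and the combinatorial identities of Lemma \ref{Taqqu}--\eqref{Contraction inequality}. Note that for $F = J_q(f)$ one has $-L^{-1}F = \frac{1}{q}F$, so $-D_kL^{-1}F = \frac{1}{q}D_kF = J_{q-1}(f(\cdot,k))$; this removes the $L^{-1}$-bookkeeping and reduces everything to moments of products of the random sequence $DF = (J_{q-1}(f(\cdot,k)))_{k\in\N}$. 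First I would handle $A_1(F)$ via Corollary \ref{MainCorollary}(i): $\langle DF, -DL^{-1}F\rangle = \frac{1}{q}\|DF\|^2_{\ell^2(\N)} = \frac{1}{q}\sum_k J_{q-1}(f(\cdot,k))^2$, and expanding each square by \eqref{Multiplication formula} and taking expectations produces, after using \eqref{Isometric relation}, exactly $q!\|f\|^2$ as the mean term plus a sum of $\|f\star_r^r f\|^2$-type terms for $r=1,\dots,q-1$; the identity \eqref{Taqqu} is the clean way to package $\E[(1-\langle DF,-DL^{-1}F\rangle)^2]$ in terms of $(1-q!\|f\|^2)^2$ and $\sum_r \|f\star_r^r f\|^2$. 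This gives control of $A_1(F)$ by the max appearing in the first displayed bound.

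Next I would treat $A_2(F)$ and $A_3(F)$ jointly, since Corollary \ref{MainCorollary}(ii) bounds their sum by $(\E[\langle (DF)^2,(DL^{-1}F)^2\rangle])^{1/2}(\E[\|DF\|^4])^{1/4}((\E[F^4])^{1/4}+1)$. Here $\E[\|DF\|^4_{\ell^2(\N)}] = \E[(\sum_k J_{q-1}(f(\cdot,k))^2)^2]$ is again expanded by the multiplication formula; its leading term is $(q\cdot q!\|f\|^2)^2$ and the remainder is a linear combination of contraction norms, so this factor is $O(1)$ under the hypothesis. Likewise $\E[F^4] = 3(q!\|f\|^2)^2 + \kappa_4(F)$ and $\kappa_4(F)$ has a standard expansion into contraction norms $\|f\star_r^r f\|^2$ and $\|\widetilde{f\star_r^r f}\|^2$ for $r=1,\dots,q-1$ (this is where \eqref{Taqqu} again does the work), so $\E[F^4]$ is bounded. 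The genuinely new factor is $\E[\langle (DF)^2,(DL^{-1}F)^2\rangle] = \frac{1}{q^2}\E[\sum_k J_{q-1}(f(\cdot,k))^4]$, and expanding the fourth power via two applications of \eqref{Multiplication formula} yields a sum whose terms are products of inner products of kernels — each recognizable as a contraction norm of $f$ — with the upshot that this expectation is $O(\max_r\{\|f\star_r^r f\|, \|f\star_r^{r-1}f\|, \dots\})$; the inequalities \eqref{eq:Relation01} and \eqref{Contraction inequality} are used to reduce the various contraction norms that appear to the ones listed in the statement.

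The main obstacle will be $A_4(F) = 2\sup_x \E[\langle (DF)D\1_{\{F>x\}}, |DL^{-1}F|\rangle]$, since the indicator $\1_{\{F>x\}}$ does not live in a fixed chaos and cannot be expanded directly. The strategy is to bound $|D_k\1_{\{F>x\}}| \le \1\{F\text{ changes sign across }x\text{ when flipping }X_k\}$ pointwise, hence $|D_k\1_{\{F>x\}}| \le \1\{|F-x| \le 2|D_kF|\}$ or a similar localization, so that $\langle (DF)D\1_{\{F>x\}},|DL^{-1}F|\rangle \le \frac{1}{q}\sum_k (D_kF)^2 \1\{|F-x|\le 2|D_kF|\}$; then a Cauchy--Schwarz step (in $k$ and in $\omega$) against $\sqrt{\E[\sum_k (D_kF)^2 \1\{\cdots\}]}$ together with a uniform bound on the small-ball quantity $\sup_x P(|F-x|\le \varepsilon \mid X_k)$ is needed. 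For $q\ge 2$ one does not have the clean Rademacher-average small-ball estimate of the first chaos, so instead I would bound $\E[(D_kF)^2 \1\{|F-x|\le 2|D_kF|\}]$ crudely by $\E[(D_kF)^2 \cdot \1\{|D_kF|\ge \delta\}] + (\text{small-ball at scale }\delta)$, optimize over $\delta$, and express the resulting quantities again through $\E[\|DF\|^4]$ and through $\E[\sum_k (D_kF)^4]$ — both already shown to be controlled by contraction norms. Assembling the four estimates and absorbing all $q$-dependent constants gives the first inequality; the second (coarser) inequality then follows because $\|f\star_r^{r-1}f\|^2 \le \|f\star_{r-1}^{r-1}f\|^2$ by \eqref{Contraction inequality}, because $\|(f\star_r^r f)\1_{\Delta}\| \le \|f\star_r^r f\|$, and because $\|f\star_q^{q-1}f\| = \|f\star_1^0 f\|$ by \eqref{eq:Relation01} which is itself dominated by $\|f\star_1^1 f\|^{1/2}\cdot(\text{bounded factor})$ via \eqref{Contraction inequality} and the normalization — so every contraction norm in the first bound is majorized by $\max\{|1-q!\|f\|^2|, \max_{r=1,\dots,q-1}\|f\star_r^r f\|\}$ up to a constant depending only on $q$.
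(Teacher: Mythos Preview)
Your treatment of $A_1(F)$, $A_2(F)$ and $A_3(F)$ matches the paper's approach essentially line for line: reduce $-DL^{-1}F$ to $\frac{1}{q}DF$, expand products via the multiplication formula, use isometry, and recognise the resulting squared norms as contraction norms; Lemma \ref{A_1 & A_2} and the identity \eqref{Taqqu} do exactly the bookkeeping you describe.

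The genuine gap is in your handling of $A_4(F)$. Your plan hinges on an anti-concentration (small-ball) estimate for $F=J_q(f)$, $q\ge 2$, of the form $\sup_x P(|F-x|\le\delta)\to 0$ with a quantitative rate in $\delta$; without such a bound the optimisation in $\delta$ you propose gives nothing. No such estimate is available in the Rademacher setting with the tools at hand (Carbery--Wright is Gaussian, and the first-chaos argument of Theorem \ref{berryJ1} does not extend), so this step does not close.

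The paper avoids small-ball estimates entirely. The key move you are missing is a second integration by parts: since $(D_kF)|D_kL^{-1}F|=\frac{1}{q}(D_kF)|D_kF|$ is independent of $X_k$, Lemma \ref{Integration by parts II} gives
\[
A_4(F)=\frac{2}{q}\sup_x\E\big[\langle D\1_{\{F>x\}},(DF)|DF|\rangle_{\ell^2(\N)}\big]
=\frac{2}{q}\sup_x\E\big[\1_{\{F>x\}}\,\delta\big((DF)|DF|\big)\big]
\le \frac{2}{q}\big(\E\big[\delta((DF)|DF|)^2\big]\big)^{1/2},
\]
which removes the indicator altogether. One then applies the Skorohod isometry \eqref{Isometry property} to $\delta((DF)|DF|)$; the diagonal part is $\E\big[\|DF\|^4_{\ell^4(\N)}\big]=q^2A_2(F)^2$, and the cross term $\E\big[\sum_{k,l}(D_k((D_lF)|D_lF|))^2\big]$ is reduced, via the product rule \eqref{Product formula gradient operator} and the identity $\E[D_k(\,\cdot\,)]=\E[X_k(\,\cdot\,)]$, to $4\E\big[\sum_{k,l}(D_lF)^2(D_kD_lF)^2\big]$. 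After Cauchy--Schwarz in $l$ this factorises as $A_2(F)^{1/2}$ times $\big(\E\big[\sum_l(\sum_k(D_kD_lF)^2)^2\big]\big)^{1/4}$, and the latter is again a sum over fixed chaoses (since $D_kD_lF=q(q-1)J_{q-2}(f(\cdot,k,l))$) whose second moment is a combination of $\|f\star_r^{r-1}f\|^2$. This closes the estimate with the contraction norms in the statement, and no small-ball input is needed. Your concluding reduction of the first bound to the second (via \eqref{eq:Relation01} and \eqref{Contraction inequality}) is correct.
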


\begin{remark}
Theorem \ref{BerryEsseenMultipleIntegrals} only proves useful for applications to sequences $F_n = J_q(f_n)$ with kernels $f_n$ for which at least one of the above bounds vanishes. In such applications, the assumption $\|(f_n \star_r^r f_n)\1_{\Delta_{2(q-r)}}\|_{\ell^2(\N)^{\otimes 2(q-r)}} < 1$, for all $r=1, \dotsc, q-1$, is naturally fullfilled, whenever $n$ is large enough. 
\end{remark}

We prepare the proof of Theorem \ref{BerryEsseenMultipleIntegrals} with the following lemma, which corresponds to Theorem 4.1 in \cite{ReiPec} combined with \eqref{eq:Relation01} and \eqref{Contraction inequality}.
\begin{lemma}\label{A_1 & A_2}
Let $F = J_q(f)$ for a fixed integer $q \geq 2$ and a symmetric kernel $f \in \ell^2_0(\N)^{\circ q}$. Then
\begin{align}
&\E \Big[ \Big( 1 - \frac{1}{q}\| DF \|_{\ell^2(\N)}^2 \Big)^2 \Big] \notag\\
&= \big( 1 - q! \lnorm{2}{q}{f}^2 \big)^2 \notag\\
&\phantom{{}={}} + q^2 \sum_{r=1}^{q-1} \Big( (r-1)! \binom{q-1}{r-1}^2 \Big)^2 (2(q-r))! \,\lnormb{2}{2(q-r)}{( \widetilde{f \star_r^r f} )  \1_{\Delta_{2(q-r)}}}^2\label{eq:HerzAbschaetzung}\\
&\leq \big( 1 - q! \lnorm{2}{q}{f}^2 \big)^2 \notag\\
&\phantom{{}={}}+ q^2 \sum_{r=1}^{q-1} \Big( (r-1)! \binom{q-1}{r-1}^2 \Big)^2 (2(q-r))!\, \lnorm{2}{2(q-r)}{f \star_r^r f}^2\notag
\end{align}
and
\begin{align}
\E \left[ \| DF \|_{\ell^4(\N)}^4 \right]&= \sum_{k=1}^\infty q^4 \sum_{r=1}^{q} \Big( (r-1)! \binom{q-1}{r-1}^2 \Big)^2 (2(q-r))!\notag\\
&\phantom{{}= \sum_{k=1}^\infty q^4 \sum_{r=1}^{q}{}} \times \lnormb{2}{2(q-r)}{( \widetilde{f(\,\cdot\,,k) \star_{r-1}^{r-1} f(\,\cdot\,,k)} ) \1_{\Delta_{2(q-r)}}}^2\notag\\
&\leq q^4 \sum_{r=1}^{q} \Big( (r-1)! \binom{q-1}{r-1}^2 \Big)^2 (2(q-r))! \lnorm{2}{2(q-r)+1}{f \star_r^{r-1} f}^2\notag\\
&\leq C\max\limits_{r=1,\dots,q-1}\{\lnorm{2}{2(q-r)}{f \star_r^{r} f}^2\},\label{eq:4NormHoch4}
\end{align}
for a constant $0<C<\infty$ depending only on $q$.
\end{lemma}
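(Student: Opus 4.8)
The plan is to unwind both displayed formulas from the definitions of the discrete gradient and the multiplication formula \eqref{Multiplication formula}, and then invoke parts (i) and (ii) of the first Lemma on contractions to pass from the exact identities to the stated upper bounds. Throughout, write $F = J_q(f)$ so that $D_k F = q\,J_{q-1}(f(\,\cdot\,,k))$ by the definition of the gradient operator on a fixed chaos, and note that $-D_k L^{-1}F = \frac1q D_k F = J_{q-1}(f(\,\cdot\,,k))$, which is why the combination $\frac1q\|DF\|_{\ell^2(\N)}^2$ is the natural object appearing in \eqref{eq:HerzAbschaetzung}.

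First I would establish \eqref{eq:HerzAbschaetzung}. Expand $\frac1q\|DF\|_{\ell^2(\N)}^2 = q\sum_{k=1}^\infty J_{q-1}(f(\,\cdot\,,k))^2$ and apply the multiplication formula \eqref{Multiplication formula} to each product $J_{q-1}(f(\,\cdot\,,k))J_{q-1}(f(\,\cdot\,,k))$, giving
\[
J_{q-1}(f(\,\cdot\,,k))^2 = \sum_{r=0}^{q-1} r!\binom{q-1}{r}^2 J_{2(q-1-r)}\big((\widetilde{f(\,\cdot\,,k)\star_r^r f(\,\cdot\,,k)})\1_{\Delta_{2(q-1-r)}}\big).
\]
Summing over $k$ and re-indexing (the order-zero term $r=q-1$ contributes the constant $(q-1)!\,\|f\|^2$-type piece that assembles into $q!\|f\|_{\ell^2(\N)^{\otimes q}}^2$ after multiplying by $q$), one recognises $\sum_k f(\,\cdot\,,k)\star_r^r f(\,\cdot\,,k)$ as essentially $f\star_{r+1}^{r+1}f$. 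Then take $\E[(1-\tfrac1q\|DF\|^2)^2]$ and use the isometry relation \eqref{Isometric relation}: the constant chaos contributes $(1-q!\|f\|^2)^2$, the higher chaoses are orthogonal and contribute the sum of their squared $L^2$-norms, which after bookkeeping of the combinatorial factors and a shift $r\mapsto r+1$ yields exactly the middle line of \eqref{eq:HerzAbschaetzung}. The inequality in the last line is then immediate from $\|\widetilde h\|\le\|h\|$ applied to $h = (f\star_r^r f)\1_{\Delta}$ together with $\|h\1_\Delta\|\le\|h\|$.

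For \eqref{eq:4NormHoch4} the argument is parallel: $\|DF\|_{\ell^4(\N)}^4 = \sum_{k=1}^\infty (D_k F)^4 = q^4\sum_k J_{q-1}(f(\,\cdot\,,k))^4$. I would write $J_{q-1}(f(\,\cdot\,,k))^4 = \big(J_{q-1}(f(\,\cdot\,,k))^2\big)^2$, but it is cleaner to take the expectation first: by \eqref{Isometric relation}, $\E[J_{q-1}(g)^4] = \E\big[(J_{q-1}(g)^2)^2\big]$ equals the sum over $r$ of the squared norms of the chaotic components of $J_{q-1}(g)^2$ computed above (again using orthogonality), applied with $g = f(\,\cdot\,,k)$. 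Summing over $k$ produces the first displayed equality in \eqref{eq:4NormHoch4}, where the kernel inside is $(\widetilde{f(\,\cdot\,,k)\star_{r-1}^{r-1}f(\,\cdot\,,k)})\1_{\Delta_{2(q-r)}}$ after the index shift. The first inequality drops the symmetrization and the indicator and recognises $\sum_k \|f(\,\cdot\,,k)\star_{r-1}^{r-1}f(\,\cdot\,,k)\|^2 = \|f\star_r^{r-1}f\|_{\ell^2(\N)^{\otimes(2(q-r)+1)}}^2$ — this is the only place where one contracts over the extra $k$-variable. The final inequality is the part that uses the earlier Lemma: for $r=1$, \eqref{eq:Relation01} gives $\|f\star_1^0 f\|^2 = \|f\star_q^{q-1}f\|_{\ell^2(\N)}^2$, and for $r\ge 2$ the chain \eqref{Contraction inequality} bounds $\|f\star_r^{r-1}f\|^2$ by $\|f\star_{r-1}^{r-1}f\|^2$, which is iterated down to an $\|f\star_s^s f\|^2$ with $1\le s\le q-1$; one also needs to control the $r=q$ term $\|f\star_q^{q-1}f\|^2$, and here one uses that it equals $\|f\star_1^0 f\|^2 \le \|f\star_1^1 f\|\cdot\|f\|^2$-type estimates, or more simply that $\|f\star_q^{q-1}f\|^2 \le \|f\star_{q-1}^{q-1}f\|^2$ by the same inequality \eqref{Contraction inequality} with $r=q$. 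Collecting all terms under a single maximum and absorbing the finitely many $q$-dependent constants into $C$ gives the claim.

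The main obstacle is the careful combinatorial bookkeeping: tracking how the factors $r!\binom{q-1}{r}^2$ transform under the index shift $r\mapsto r-1$ (or $r+1$) so that they match $(r-1)!\binom{q-1}{r-1}^2$ with the $(2(q-r))!$ coming from the isometry on the chaos of order $2(q-r)$, and making sure the $r$-range endpoints ($r=q-1$ becoming the constant term in the first identity, $r=q$ appearing as a genuine off-diagonal-free term in the second) are handled correctly. A secondary subtlety is that $f(\,\cdot\,,k)\star_r^r f(\,\cdot\,,k)$, summed over $k$, is not literally $f\star_{r+1}^{r+1}f$ but rather its restriction to the appropriate diagonal set, so one must be attentive to where the indicators $\1_{\Delta}$ sit; fortunately for the inequalities these only help, and for the identities they are exactly what the multiplication formula \eqref{Multiplication formula} produces. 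Everything else is a routine application of \eqref{Isometric relation}, \eqref{Multiplication formula}, and the norm inequalities already recorded.
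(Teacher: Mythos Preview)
Your approach is correct and is essentially what the paper does: it attributes the identities to Theorem~4.1 in \cite{ReiPec} (whose proof is exactly the multiplication-formula-plus-isometry computation you outline, and which the paper in fact reproduces later in the proof of Theorem~\ref{BerryEsseenMultipleIntegrals}) and then passes to the final bound via \eqref{eq:Relation01} and \eqref{Contraction inequality}. One small simplification: for $r\ge 2$ a single application of \eqref{Contraction inequality} already gives $\|f\star_r^{r-1}f\|\le\|f\star_{r-1}^{r-1}f\|$ with $r-1\in\{1,\dots,q-1\}$, so no iteration is needed.
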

\begin{proof}[Proof of Theorem \ref{BerryEsseenMultipleIntegrals}]
Let us first assume that the support of $f$ satisfies $${\rm supp}(f) \subseteq \lbrace 1, \dotsc, n \rbrace^q$$ for some $n \in \N$. According to Theorem \ref{mainthm} and Corollary \ref{MainCorollary}, we have to bound the following quantities:
\begin{align}
A_1(F) &:= \big( \E \big[ \left( 1 - \langle DF, -DL^{-1}F \rangle_{\ell^2(\N)} \right)^2 \big] \big)^\frac{1}{2}\,,\label{eq:A1} \\
A_2(F) &:= \big( \E \big[ \langle \left( DF \right)^2, ( DL^{-1}F )^2 \rangle_{\ell^2(\N)} \big] \big)^\frac{1}{2}\,, \label{eq:A2}\\
A_3(F) &:= \big( \E \big[ \| DF \|_{\ell^2(\N)}^4 \big] \big)^\frac{1}{4} \big( \big( \E [ F^4 ] \big)^\frac{1}{4} + 1 \big)\,, \label{eq:A3}\\
A_4(F) &:= 2 \,\sup_{x \in \R} \E \big[ \langle ( DF ) ( D\1_{\lbrace F > x \rbrace} ), | DL^{-1}F | \rangle_{\ell^2(\N)} \big]\,.\label{eq:A4}
\end{align}
First observe that, since $D_kF = qJ_{q-1}(f(\cdot, k))$ and $-D_kL^{-1}F = J_{q-1}(f(\cdot, k)) = \frac{1}{q}D_kF$, we have
\begin{align*}
A_1(F) &= \big( \E \big[ \big( 1 - \frac{1}{q}\| DF \|_{\ell^2(\N)}^2 \big)^2 \big] \big)^\frac{1}{2}\quad\text{and}\quad
A_2(F) = \big( \E \big[ \frac{1}{q^2}\| DF \|_{\ell^4(\N)}^4 \big] \big)^\frac{1}{2}\,.
\end{align*}
Thus, using \eqref{eq:Relation01}, \eqref{Contraction inequality} and Lemma \ref{A_1 & A_2}, we can estimate $A_1(F)$ and $A_2(F)$ as follows:
\begin{align}
A_1(F) \notag &\leq  \big| 1 - q! \lnorm{2}{q}{f}^2 \big| \notag\\
&\qquad+ q\Big( \sum_{r=1}^{q-1} \Big( (r-1)! \binom{q-1}{r-1}^2 \Big)^2 (2(q-r))! \lnormb{2}{2(q-r)}{(f \star_r^r f)\1_{\Delta_{2(q-r)}}}^2 \Big)^\frac{1}{2}\label{XA1Bound}\\
&\leq \big| 1 - q! \lnorm{2}{q}{f}^2 \big| \notag\\
&\qquad+ q\Big( \sum_{r=1}^{q-1} \Big( (r-1)! \binom{q-1}{r-1}^2 \Big)^2 (2(q-r))! \lnormb{2}{2(q-r)}{f \star_r^r f}^2 \Big)^\frac{1}{2} \label{A_1 bound}
\end{align}
and
\begin{align}
A_2(F) 
&\leq q\Big( \sum_{r=1}^{q} \Big( (r-1)! \binom{q-1}{r-1}^2 \Big)^2 (2(q-r))! \lnorm{2}{2(q-r)+1}{f \star_r^{r-1} f}^2 \Big)^\frac{1}{2} \label{XA2Bound}\\
&= q\Big( (2(q-1))! \lnormb{2}{2(q-1)+1}{f \star_1^0 f}
\vphantom{\sum_{r=2}^{q} \Big( (r-1)! \binom{q-1}{r-1}^2 \Big)^2} \notag\\
&\qquad+ \sum_{r=2}^{q} \Big( (r-1)! \binom{q-1}{r-1}^2 \Big)^2 (2(q-r))! \lnorm{2}{2(q-r)+1}{f \star_r^{r-1} f}^2 \Big)^\frac{1}{2} \notag\\
&\leq q\Big( (2(q-1))! \lnormb{2}{2}{f \star_{q-1}^{q-1} f}^2\notag\\
&\qquad+ 
\sum_{r=2}^{q} \Big( (r-1)! \binom{q-1}{r-1}^2 \Big)^2 (2(q-r))! \lnorm{2}{2(q-r+1)}{f \star_{r-1}^{r-1} f}^2 \Big)^\frac{1}{2} \notag\\
&= q\Big( (2(q-1))! \lnormb{2}{2}{f \star_{q-1}^{q-1} f}^2\notag\\
&\qquad+ 
 \sum_{r=1}^{q-1} \Big( r! \binom{q-1}{r}^2 \Big)^2 (2(q-r-1))! \lnorm{2}{2(q-r)}{f \star_r^r f}^2 \Big)^\frac{1}{2}\,.\label{A2 bound}
\end{align}

Considering $A_3(F)$, we use the multiplication formula \eqref{Multiplication formula} to see that
\begin{align}
(D_kF)^2 &= q^2 \sum_{r=0}^{q-1} r! \binom{q-1}{r}^2 J_{2(q-r-1)}\big( ( \widetilde{f(\cdot,k) \star_r^r f(\cdot,k)} ) \1_{\Delta_{2(q-r-1)}} \big) \notag\\
&= q^2 \sum_{r=1}^{q} (r-1)! \binom{q-1}{r-1}^2 J_{2(q-r)}\big( ( \widetilde{f(\cdot,k) \star_{r-1}^{r-1} f(\cdot,k)} ) \1_{\Delta_{2(q-r)}} \big). \notag
\end{align}
Since $f$ has finite support, we thus find that
\begin{align}
\| DF \|_{\ell^2(\N)}^2 &= \sum_{k=1}^\infty q^2 \sum_{r=1}^{q} (r-1)! \binom{q-1}{r-1}^2 J_{2(q-r)}\big( ( \widetilde{f(\cdot,k) \star_{r-1}^{r-1} f(\cdot,k)} ) \1_{\Delta_{2(q-r)}} \big) \notag\\
&= q^2 \sum_{r=1}^{q} (r-1)! \binom{q-1}{r-1}^2 J_{2(q-r)}\big( ( \widetilde{f \star_r^r f} ) \1_{\Delta_{2(q-r)}} \big) \notag\\
&= q^2 \sum_{r=1}^{q-1} (r-1)! \binom{q-1}{r-1}^2 J_{2(q-r)} \big( ( \widetilde{f \star_r^r f} ) \1_{\Delta_{2(q-r)}} \big) + q \cdot q! \lnorm{2}{q}{f}^2\,. \notag
\end{align}
Using the isometry relation \eqref{Isometric relation} we deduce the bound
%
\begin{align}
&\big( \E \big[ \| DF \|_{\ell^2(\N)}^4 \big] \big)^\frac{1}{4} \leq  \big(q \cdot q! \lnorm{2}{q}{f}^2 \big)^\frac{1}{2}\notag\\
&\qquad +q \Big( \sum_{r=1}^{q-1} \Big( (r-1)! \binom{q-1}{r-1}^2 \Big)^2 (2(q-r))! \lnormb{2}{2(q-r)}{(f \star_r^r f)\1_{\Delta_{2(q-r)}}}^2 \Big)^\frac{1}{4} \label{XA31Bound}\\
&\leq q \Big( \sum_{r=1}^{q-1} \Big( (r-1)! \binom{q-1}{r-1}^2 \Big)^2 (2(q-r))! \lnormb{2}{2(q-r)}{f \star_r^r f}^2 \Big)^\frac{1}{4} + \big(q \cdot q! \lnorm{2}{q}{f}^2 \big)^\frac{1}{2}\label{A31 bound}
\end{align}
for the first factor in $A_3(F)$. For the second factor we use again the multiplication formula \eqref{Multiplication formula} to see that
\begin{align}
F^2 = \sum_{r=0}^{q-1} r! \binom{q}{r}^2 J_{2(q-r)} \big( ( \widetilde{f \star_r^r f} ) \1_{\Delta_{2(q-r)}} \big) + q! \lnorm{2}{q}{f}^2\,. \notag
\end{align}
The isometry relation \eqref{Isometric relation} then shows that $\E[F^4]$ can be expressed as 
\begin{equation*}
\E[F^4]=\sum_{r=0}^{q-1} \Big( r! \binom{q}{r}^2 \Big)^2 (2(q-r))! \lnormb{2}{2(q-r)}{( \widetilde{f \star_r^r f} ) \1_{\Delta_{2(q-r)}}}^2 + \big( q! \lnorm{2}{q}{f}^2 \big)^2\,.
\end{equation*}
Separating the term $r=0$ and applying \eqref{Taqqu} we find that $\E[F^4]$ is bounded from above by
\begin{align}
&3\big( q! \lnorm{2}{q}{f}^2 \big)^2 \notag\\
&\qquad+ \sum_{r=1}^{q-1} \Big( \Big( q!\binom{q}{r} \Big)^2 + \Big( r!\binom{q}{r}^2 \Big)^2 (2(q-r))! \Big) \lnormb{2}{2(q-r)}{(f \star_r^r f)\1_{\Delta_{2(q-r)}}}^2 \label{XA32Bound}\\
&\leq 3\big( q! \lnorm{2}{q}{f}^2 \big)^2 + \sum_{r=1}^{q-1} \Big( q!\binom{q}{r} \Big)^2 \Big( 1 + \binom{2(q-r)}{q-r} \Big) \lnorm{2}{2(q-r)}{f \star_r^r f}^2\,. \label{A32 bound}
\end{align}


Next, we consider the term $A_4(F)$. By virtue of Lemma \ref{Integration by parts II}, we have
\begin{align}
A_4(F) &= 2 \, \sup_{x \in \R} \E \left[ \langle (DF)(D \1_{\lbrace F > x \rbrace}), |DL^{-1}F| \rangle_{\ell^2(\N)} \right] \notag\\
&= \frac{2}{q} \, \sup_{x \in \R} \E \left[ \langle D\1_{\lbrace F > x \rbrace}, (DF)|DF| \rangle_{\ell^2(\N)} \right] \notag\\
&= \frac{2}{q} \, \sup_{x \in \R} \E \left[ \1_{\lbrace F > x \rbrace} \delta((DF)|DF|) \right] \notag\\
&\leq \frac{2}{q} \,\E \left[ | \delta((DF)|DF|) | \right] \notag\\
&\leq \frac{2}{q} \, (\E \left[ (\delta((DF)|DF|))^2 \right])^\frac{1}{2}\,. \label{A_4(F) inequality}
\end{align}
We now apply the isometry property \eqref{Isometry property} of the divergence operator. This leads to
\begin{align}
&\E \big[ (\delta((DF)|DF|))^2 \big] \notag\\
&= \E \big[ \| (DF) |DF| \|_{\ell^2(\N)}^2 \big] + \E \Big[ \sum_{k,l=1}^\infty [D_k(D_lF|D_lF|)]\,[ D_l(D_kF|D_kF|)] \Big] \notag\\
&= \E \big[ \| DF \|_{\ell^4(\N)}^4 \big] + \E \Big[ \sum_{k,l=1}^\infty [D_k(D_lF|D_lF|)]\,[ D_l(D_kF|D_kF|)] \Big] \notag\\
&\leq \E \big[ \| DF \|_{\ell^4(\N)}^4 \big] + \E \Big[ \sum_{k,l=1}^\infty (D_k(D_lF|D_lF|))^2 \Big]\,. \label{delta inequality}
\end{align}
Now, using both the product formula \eqref{Product formula gradient operator} for the gradient operator and \eqref{stroock} we see that
\begin{align}
&\E \left[ (D_k(D_lF|D_lF|))^2 \right] \notag\\
&= \E \left[ (D_k((D_lF)^2))^2 \1_{\lbrace D_lF \geq 0 \rbrace} + (D_k(-(D_lF)^2))^2 \1_{\lbrace D_lF < 0 \rbrace} \right] \notag\\
&= \E \left[ (D_k((D_lF)^2))^2 \right] \notag\\
&= \E \left[ \left( 2(D_lF)(D_kD_lF) - 2X_k(D_kD_lF)^2 \right)^2 \right] \notag\\
&= 4 \E \left[ (D_lF)^2(D_kD_lF)^2 \right] - 8 \E \left[ X_k(D_lF)(D_kD_lF)^3 \right] + 4 \E \left[ (D_kD_lF)^4 \right] \notag\\
&= 4 \E \left[ (D_lF)^2(D_kD_lF)^2 \right] - 8 \E \left[ D_k((D_lF)(D_kD_lF)^3) \right] + 4 \E \left[ (D_kD_lF)^4 \right] \notag\\
&= 4 \E \left[ (D_lF)^2(D_kD_lF)^2 \right] - 4 \E \left[ (D_kD_lF)^4 \right] \notag\\
&\leq 4 \E \left[ (D_lF)^2(D_kD_lF)^2 \right]. \label{teta inequality}
\end{align}
Combining \eqref{delta inequality} and \eqref{teta inequality} we further estimate
\begin{align}
&\E \big[ (\delta((DF)|DF|))^2 \big] \notag\\
&\leq \E \big[ \| DF \|_{\ell^4(\N)}^4 \big] + 4 \E \Big[ \sum_{l=1}^\infty \Big( (D_lF)^2  \sum_{k=1}^\infty (D_kD_lF)^2 \Big) \Big] \notag\\
&\leq \E \big[ \| DF \|_{\ell^4(\N)}^4 \big] + 4 \E \Big[ \| DF \|_{\ell^4(\N)}^2  \Big( \sum_{l=1}^\infty \Big( \sum_{k=1}^\infty (D_kD_lF)^2 \Big)^2 \Big)^\frac{1}{2} \Big] \notag\\
&\leq \E \big[ \| DF \|_{\ell^4(\N)}^4 \big] + 4 \big( \E \big[ \| DF \|_{\ell^4(\N)}^4 \big] \big)^\frac{1}{2}  \Big( \E \Big[ \sum_{l=1}^\infty \Big( \sum_{k=1}^\infty (D_kD_lF)^2 \Big)^2 \Big] \Big)^\frac{1}{2}. \notag
\end{align}

Putting this into \eqref{A_4(F) inequality}, we arrive at
\begin{align}
A_4(F) 
\leq 2 A_2(F) + 4 \Big( \frac{1}{q} A_2(F) \Big)^\frac{1}{2} \, \underbrace{\Big( \E \Big[ \sum_{l=1}^\infty \Big( \sum_{k=1}^\infty (D_kD_lF)^2 \Big)^2 \Big] \Big)^\frac{1}{4}}_{=: A_4^\prime(F)}. \label{A41 bound}
\end{align}

To bound $A_4'(F)$ further we notice that $D_kD_lF = q(q-1)J_{q-2}(f(\,\cdot\,,k,l))$, which implies
\begin{align}
&\sum_{k=1}^\infty (D_kD_lF)^2 \notag\\
&= \sum_{k=1}^\infty q^2(q-1)^2 \sum_{r=0}^{q-2} r! \binom{q-2}{r}^2 J_{2(q-r-2)} \big( ( \widetilde{f(\cdot,k,l) \star_r^r f(\cdot,k,l)} ) \1_{\Delta_{2(q-r-2)}}\big) \notag\\
&= \sum_{k=1}^\infty q^2(q-1)^2 \sum_{r=2}^{q} (r-2)! \binom{q-2}{r-2}^2 J_{2(q-r)} \big( ( \widetilde{f(\cdot,k,l) \star_{r-2}^{r-2} f(\cdot,k,l)} ) \1_{\Delta_{2(q-r)}}\big) \notag\\
&= q^2(q-1)^2 \sum_{r=2}^{q} (r-2)! \binom{q-2}{r-2}^2 J_{2(q-r)} \big( ( \widetilde{f(\cdot,l) \star_{r-1}^{r-1} f(\cdot,l)} ) \1_{\Delta_{2(q-r)}}\big). \notag
\end{align}
Then, by the isometry of discrete multiple stochastic integrals \eqref{Isometric relation} and the contraction inequality \eqref{Contraction inequality}, we see that
\begin{align}
&\sum_{l=1}^\infty \E \Big[ \Big( \sum_{k=1}^\infty (D_kD_lF)^2 \Big)^2 \Big] \notag\\
&= \sum_{l=1}^\infty q^4(q-1)^4 \sum_{r=2}^{q} \Big( (r-2)! \binom{q-2}{r-2}^2 \Big)^2 (2(q-r))! \notag \lnormb{2}{2(q-r)}{( \widetilde{f(\cdot,l) \star_{r-1}^{r-1} f(\cdot,l)} ) \1_{\Delta_{2(q-r)}}}^2\notag\\
&\leq \sum_{l=1}^\infty q^4(q-1)^4 \sum_{r=2}^{q} \Big( (r-2)! \binom{q-2}{r-2}^2 \Big)^2 (2(q-r))! \lnorm{2}{2(q-r)}{f(\cdot,l) \star_{r-1}^{r-1} f(\cdot,l)}^2 \notag\\
&= q^4(q-1)^4 \sum_{r=2}^{q} \Big( (r-2)! \binom{q-2}{r-2}^2 \Big)^2 (2(q-r))! \lnorm{2}{2(q-r)+1}{f \star_{r}^{r-1} f}^2 \label{XA4Bound}\\
&\leq q^4(q-1)^4 \sum_{r=2}^{q} \Big( (r-2)! \binom{q-2}{r-2}^2 \Big)^2 (2(q-r))! \lnorm{2}{2(q-r+1)}{f \star_{r-1}^{r-1} f}^2 \notag\\
&= q^4(q-1)^4 \sum_{r=1}^{q-1} \Big( (r-1)! \binom{q-2}{r-1}^2 \Big)^2 (2(q-r-1))! \lnorm{2}{2(q-r)}{f \star_{r}^{r} f}^2.\notag
\end{align}

Thus,
\begin{equation}\label{A4Bound}
A_4^\prime(F) \leq q(q-1) \Big( \sum_{r=1}^{q-1} \Big( (r-1)! \binom{q-2}{r-1}^2 \Big)^2 (2(q-r-1))! \lnorm{2}{2(q-r)}{f \star_{r}^{r} f}^2 \Big)^\frac{1}{4}.
\end{equation}
Combining \eqref{XA1Bound}, \eqref{XA2Bound}, \eqref{XA31Bound}, \eqref{XA32Bound}, \eqref{A41 bound} and \eqref{XA4Bound} yields the first inequality in Theorem \ref{BerryEsseenMultipleIntegrals}, while \eqref{A_1 bound}, \eqref{A2 bound}, \eqref{A31 bound}, \eqref{A32 bound}, \eqref{A41 bound} and \eqref{A4Bound} give the second bound in Theorem \ref{BerryEsseenMultipleIntegrals} for a discrete multiple stochastic integral $J_q(f)$ whose integrand $f$ satisfies ${\rm supp}(f) \subseteq \lbrace 1, \dotsc, n \rbrace^q$. For the general case we use the following approximation argument (cf.\ \cite{Pri}). Consider for $n\in\N$ the sequence of truncated kernels $f_{n}:=f\1_{\{1,\dots,n\}^q}$. Since the sequence $(J_q(f_n))_{n\geq 1}$ is a martingale with respect to the filtration $(\mathcal{F}_n)_{n\geq 1}$ with $\mathcal{F}_n:=\sigma(X_1,\dots,X_n)$, an application of the martingale convergence theorem yields that
\[\lim_{n\to\infty}J_q(f\1_{\{1,\dots,n\}^q})=\lim_{n\to\infty}\E[J_q(f)|\mathcal{F}_n]=J_q(f)\,.\]
The assertion thus follows by means of \cite[Lemma 2.6]{Pri} and continuity of the gradient operator on a fixed Rademacher chaos.
\end{proof}

\begin{remark}\label{rem:NormalGeneral}
If in Theorem \ref{BerryEsseenMultipleIntegrals} a general centred Gaussian random variable $N_{\sigma^2}$ with variance $\sigma^2>0$ is used, the Berry-Esseen bound has to be replaced by
\[d_K(F, N_{\sigma^2}) \leq C\,{1\over \sigma^2}\max\big\{|\sigma^2-q!\lnorm{2}{q}{f}^2|,\max_{r=1,\ldots,q-1}\{\|f\star_r^r f\|_{\ell^2(\N)^{\otimes 2(q-r)}}\}\big\}\,.\]
This is easily verified by a re-scaling argument.
\end{remark}

\subsection{A necessary condition for double integrals}

Theorem \ref{BerryEsseenMultipleIntegrals} says that $J_q(f_n)$ converges in distribution to a standard Gaussian random variable if
$$\lim_{n\to\infty}\max_{r=1,\ldots,q-1}\{\|f_n\star_r^r f_n\|_{\ell^2(\N)^{\otimes 2(q-r)}}\}=0\,,$$
provided that $q!\|f_n\|\to 1$, as $n\to\infty$.
In view of the results from \cite{EicTha,LacPec1,NouPecPTRF09} implying that vanishing contraction norms yield a central limit theorem for Gaussian or Poisson multiple integrals (in the latter case at least if the functions $f_n$ are non-negative), it is natural to ask whether this is also a necessary condition for a sequence of discrete multiple integrals to satisfy a central limit theorem. Here, we concentrate on the case of double integrals $F_n:=J_2(f_n)$ and recall that it has been claimed in Proposition 4.6 of \cite{ReiPec} that $$\lim_{n\to\infty}\lnorm{2}{2}{f_n \star_1^1 f_n}^2 = 0$$ is a necessary and sufficient condition for weak convergence of the distribution of $F_n$ to the standard normal distribution, if $\E[F_n^2]\to 1$, as $n\to\infty$. The proof of sufficiency of this condition for asymptotic normality of (quite general) quadratic forms goes back to the classical work \cite{Jon} of de Jong. Another paper in this regard is the paper \cite{Cha} of Chatterjee, where in Proposition 3.1 ibidem a bound for the normal approximation of quadratic forms of Rademacher functionals has been obtained by means of Stein's method. As shown in \cite{ReiPec}, these bounds also have a representation in terms of norms of contractions. However, it turns out that the convergence of these norms to zero is not neccessary for asymptotic normality as the following example shows (see Section 1.6 in \cite{NouPecReiInvariance}).

\begin{example}\label{ex:Gegenbsp}\rm 
Consider $F_n = J_2(f_n)$, $n \geq 2$, with
\begin{align*}
f_n(i,j) = \begin{cases}
\frac{1}{2\sqrt{n-1}} \,, &\text{if $\{ i, j \} = \{ 1, k \}$ for some $k=2, \dotsc, n$}\,,\\
0\,, &\text{otherwise}.
\end{cases}
\end{align*}
Then, $$F_n = \sum_{i,j=1}^n f_n(i,j)X_iX_j = \frac{X_1}{\sqrt{n-1}} \sum_{i=2}^n X_i$$ with $\E[F_n^2] = 1$. We notice that the distribution of $F_n$ converges weakly to the standard normal distribution. But
\begin{align}
(f_n \star_1^1 f_n) (i,j) &= \sum_{k=2}^n f_n^2(1,k) \1_{\{ i=j=1 \}}(i,j) + f_n(1,i)f_n(1,j) \1_{\{ i \geq 2, j \geq 2 \}}(i,j) \notag\\
&= \frac{1}{4} \1_{\{ i=j=1 \}}(i,j) + \frac{1}{4(n-1)} \1_{\{ i \geq 2, j \geq 2 \}}(i,j) \notag
\end{align}
and hence
\begin{align}
&\lnorm{2}{2}{f_n \star_1^1 f_n}^2 = \sum_{i,j=1}^n \left( \frac{1}{16} \1_{\{ i=j=1 \}}(i,j) + \frac{1}{16(n-1)^2} \1_{\{ i \geq 2, j \geq 2 \}}(i,j) \right) = \frac{1}{8}. \notag
\end{align}
\end{example}

We now deduce a new necessary condition for a sequence $J_2(f_n)$ of double integrals to converge in distribution to a standard Gaussian random variable. It shows that the validity of such a central limit theorem depends in general on a subtle interplay of contraction norms on and off diagonals. Such a phenomenon is not visible for Gaussian or Poisson multiple integrals and seems to be a special feature of the discrete set-up.

\begin{theorem}\label{prop:Necessary}
Let $F_n = J_2(f_n)$, $f_n \in \ell_0^2(\N)^{\circ 2}$ and assume that $\E [F_n^2 ]= 1$ for all $n\in\N$. A necessary condition for the convergence of the distribution of $F_n$ to the standard normal distribution is that
\begin{align}
2\lnorm{4}{2}{f_n}^4 + 3\left( \lnorm{2}{2}{\left( f_n \star_1^1 f_n \right) \1_{\Delta_2}}^2 - \lnorm{2}{2}{\left( f_n \star_1^1 f_n \right) \1_{\Delta_2^c}}^2 \right) \longrightarrow 0\,, \label{Necessary condition}
\end{align}
as $n \rightarrow \infty$.
\end{theorem}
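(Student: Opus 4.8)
The guiding principle is the classical fact that if $\E[F_n^2]=1$ for all $n$ and $F_n \to N$ in distribution, then—provided the sequence is uniformly integrable in a strong enough sense—one has convergence of moments, and in particular $\E[F_n^3]\to 0$ and $\E[F_n^4]\to 3$. Since $F_n=J_2(f_n)$ lives in a fixed Rademacher chaos, hypercontractivity-type estimates on a fixed chaos give that all $L^p$-norms of $F_n$ are comparable; more precisely, for a fixed order $q$, boundedness of $\E[F_n^2]$ already implies boundedness of every $\E[F_n^{2m}]$, hence uniform integrability of $F_n^2$ and $F_n^3$, and therefore $\E[F_n^3]\to\E[N^3]=0$ and $\E[F_n^4]\to\E[N^4]=3$. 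So the first step is to record these two moment consequences of the assumed CLT.

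The second step is purely algebraic: express both $\E[F_n^3]$ and $\kappa_4(F_n)=\E[F_n^4]-3$ in terms of the kernel $f_n$ using the multiplication formula \eqref{Multiplication formula} and the isometry \eqref{Isometric relation}, being careful (this is the whole point) to keep the on-diagonal and off-diagonal parts of $f_n\star_1^1 f_n$ separate rather than collapsing them. Concretely, writing $F_n^2$ via \eqref{Multiplication formula} one gets $F_n^2 = J_4(\widetilde{f_n\star_0^0 f_n}\,\1_{\Delta_4}) + 4 J_2((\widetilde{f_n\star_1^1 f_n})\1_{\Delta_2}) + 2\|f_n\|^2_{\ell^2(\N)^{\otimes 2}}$, so that $\E[F_n^4]=\E[(F_n^2)^2]$ splits by the isometry into $\|(f_n\star_0^0 f_n)\1_{\Delta_4}\|^2$ (symmetrized, with the combinatorial factor from \eqref{Taqqu}), a term $16\cdot 2!\,\|(\widetilde{f_n\star_1^1 f_n})\1_{\Delta_2}\|^2$, and $4\|f_n\|^4$; and similarly $\E[F_n^3]=\E[F_n\cdot F_n^2]$ picks out, via the $J_2$ component, a multiple of $\|(f_n\star_1^1 f_n)\1_{\Delta_2}\|^2$ together with a genuinely off-diagonal contribution coming from contracting $f_n$ against $f_n\star_1^1 f_n$ restricted to $\Delta_2^c$, i.e.\ a term involving $\|(f_n\star_1^1 f_n)\1_{\Delta_2^c}\|^2$ and $\|f_n\|_{\ell^4(\N)^{\otimes 2}}^4$. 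The key identity to establish here is a clean representation of the form $\kappa_4(F_n)+(\text{const})\cdot\E[F_n^3]$ (or an analogous linear combination) that reduces, after the leading $\|(f_n\star_0^0 f_n)\1_{\Delta_4}\|^2$ term is handled via \eqref{Taqqu} and cancellations with $\|f_n\|^4$, to exactly the left-hand side of \eqref{Necessary condition}, namely $2\|f_n\|_{\ell^4(\N)^{\otimes 2}}^4 + 3(\|(f_n\star_1^1 f_n)\1_{\Delta_2}\|^2 - \|(f_n\star_1^1 f_n)\1_{\Delta_2^c}\|^2)$. One also has to use that $f_n\star_1^1 f_n$ is nonnegative on the diagonal block (where it equals $\sum_k f_n(i,k)^2$) so that $\|(f_n\star_1^1 f_n)\1_{\Delta_2^c}\|^2$ really is an honest $\ell^2$-norm of the $\ell^4$-type diagonal, matching the $\ell^4$-norm of $f_n$ up to symmetrization constants.

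The final step combines the two: since $\E[F_n^3]\to 0$ and $\kappa_4(F_n)\to 0$, any fixed linear combination of them tends to $0$, and by the algebraic identity of the second step this linear combination equals (a positive multiple of) the expression in \eqref{Necessary condition} plus a term that is manifestly nonnegative—presumably the leading contraction norm $\|(\widetilde{f_n\star_0^0 f_n})\1_{\Delta_4}\|^2$ or what remains of it after the cancellations. One must check that this leftover term is indeed $\geq 0$ so that its presence cannot mask the convergence to $0$ of \eqref{Necessary condition}; that sign check is where I expect the real subtlety to lie, because \eqref{Necessary condition} is itself not sign-definite (the off-diagonal norm enters with a minus sign), so the decomposition must be arranged so that all the \emph{other} pieces are nonnegative and the potentially-negative combination is isolated exactly as written. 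The main obstacle, then, is bookkeeping: getting all the combinatorial constants in \eqref{Taqqu}, in the multiplication formula, and in the symmetrizations to line up so that the cancellation producing precisely "$2\lnorm{4}{2}{f_n}^4 + 3(\cdots)$" actually occurs, rather than some other linear combination. Once the identity is in hand, the limit statement is immediate.
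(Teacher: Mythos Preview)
Your overall strategy is right and matches the paper's: hypercontractivity on a fixed chaos gives uniform boundedness of all moments, hence $\E[F_n^4]\to 3$; then one expresses $\E[F_n^4]$ via the multiplication formula and isometry, keeping on- and off-diagonal pieces of $f_n\star_1^1 f_n$ separate. So the core ideas are in place.

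Where you go astray is in anticipating more structure than is actually present. Two points:

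\medskip
\textbf{(1) The third moment is irrelevant.} You propose forming a linear combination $\kappa_4(F_n)+c\,\E[F_n^3]$ and then isolating \eqref{Necessary condition}. In fact no third-moment input is needed: the paper shows (in the lemma immediately preceding the theorem) the \emph{exact} identity
\[
\E[F^4]\;=\;3 \;+\; 32\,\lnorm{4}{2}{f}^4 \;+\; 48\big(\lnorm{2}{2}{(f\star_1^1 f)\1_{\Delta_2}}^2 - \lnorm{2}{2}{(f\star_1^1 f)\1_{\Delta_2^c}}^2\big),
\]
so the expression in \eqref{Necessary condition} is exactly $\tfrac{1}{16}\kappa_4(F_n)$. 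Since $\kappa_4(F_n)\to 0$, you are done.

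\medskip
\textbf{(2) There is no ``nonnegative leftover'' to worry about.} Your concern that a residual term $\lnormb{2}{4}{(\widetilde{f\star_0^0 f})\1_{\Delta_4}}^2$ (or some remnant of it) might survive and create a sign obstruction dissolves once the computation is carried through. The key step is to split
\[
\lnormb{2}{4}{(\widetilde{f\star_0^0 f})\1_{\Delta_4}}^2 \;=\; \lnormb{2}{4}{\widetilde{f\star_0^0 f}}^2 \;-\; \lnormb{2}{4}{(\widetilde{f\star_0^0 f})\1_{\Delta_4^c}}^2,
\]
apply \eqref{Taqqu} to the first piece, and compute the second piece combinatorially (only the patterns $(i,i,j,k)$ and $(i,i,j,j)$ contribute, since $f$ vanishes on diagonals). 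This yields $\lnormb{2}{4}{(\widetilde{f\star_0^0 f})\1_{\Delta_4^c}}^2 = \tfrac{8}{3}\lnorm{2}{2}{(f\star_1^1 f)\1_{\Delta_2^c}}^2 - \tfrac{4}{3}\lnorm{4}{2}{f}^4$, and substituting back produces the clean identity above with nothing left over. So your worry that ``the decomposition must be arranged so that all the other pieces are nonnegative'' is moot: there are no other pieces.

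In short, your plan would work but is more elaborate than necessary; the paper's route is a direct fourth-moment identity with no third-moment term and no residual.
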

It is readily checked that our new necessary condition \eqref{Necessary condition} is satisfied for Example \ref{ex:Gegenbsp}.

\medspace

The proof of Theorem \ref{prop:Necessary} is prepared by the following two lemmas. The first one is a hypercontractivity property of Rademacher functionals with a finite chaotic decomposition and the second one provides an expression for the 4th moment of a discrete stochastic double integral.
\begin{lemma}
\label{Hypercontractivity}
Let $F = \E[F] + \sum_{n=1}^d J_n(f_n)$, $f_n\in\ell^2_0(\N)^{\circ n}$, for some $d \in \N$, and suppose that $2 \leq p < q$.  Then there exists a constant $0<C<\infty$ depending only on $d$ such that
\begin{align}
\left( \E \left[ \left| F \right|^q \right] \right)^\frac{1}{q} \leq C \left( \frac{q-1}{p-1} \right)^\frac{d}{2} \left( \E \left[ \left| F \right|^p \right] \right)^\frac{1}{p}\,. 
\end{align}
\end{lemma}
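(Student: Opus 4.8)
The plan is to reduce the hypercontractivity statement for a finite sum of Rademacher multiple integrals to the classical hypercontractive inequality on each fixed chaos, which is a standard and well-known fact (it is the Rademacher analogue of the Nelson--Gross hypercontractivity, and appears e.g. in the work of Bonami, Beckner, or in the book of Janson). Concretely, the ingredient I would invoke is: for a single chaos $J_n(f_n)$ one has $(\E[|J_n(f_n)|^q])^{1/q}\leq (q-1)^{n/2}(\E[J_n(f_n)^2])^{1/2}$ for $q\geq 2$, and more generally, for $2\leq p<q$, $(\E[|J_n(f_n)|^q])^{1/q}\leq \bigl((q-1)/(p-1)\bigr)^{n/2}(\E[|J_n(f_n)|^p])^{1/p}$. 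This last inequality is exactly the Rademacher version of the hypercontractivity estimate that follows from the semigroup/contraction property of the Ornstein--Uhlenbeck operator; one may quote it, for instance, from de la Pe\~na--Gin\'e or from \cite{Jon}, or derive it from the two-point inequality.

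First I would write $F=\E[F]+\sum_{n=1}^d J_n(f_n)$ and apply the triangle inequality in $L^q(\Omega)$:
\begin{align*}
\bigl(\E[|F|^q]\bigr)^{1/q}\leq |\E[F]|+\sum_{n=1}^d \bigl(\E[|J_n(f_n)|^q]\bigr)^{1/q}.
\end{align*}
Then on each summand I apply the single-chaos hypercontractivity bound to get
\begin{align*}
\bigl(\E[|J_n(f_n)|^q]\bigr)^{1/q}\leq \Bigl(\frac{q-1}{p-1}\Bigr)^{n/2}\bigl(\E[|J_n(f_n)|^p]\bigr)^{1/p}\leq \Bigl(\frac{q-1}{p-1}\Bigr)^{d/2}\bigl(\E[|J_n(f_n)|^p]\bigr)^{1/p},
\end{align*}
where in the last step I used that $(q-1)/(p-1)\geq 1$ (since $p<q$) and $n\leq d$. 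For the constant term note $|\E[F]|=\bigl(\E[|\E[F]|^2]\bigr)^{1/2}=\bigl(\E[|\E[F]|^p]\bigr)^{1/p}$ and trivially $1\leq ((q-1)/(p-1))^{d/2}$. So altogether $(\E[|F|^q])^{1/q}\leq ((q-1)/(p-1))^{d/2}\sum_{n=0}^d (\E[|J_n(f_n)|^p])^{1/p}$, writing $J_0(f_0):=\E[F]$.

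It remains to bound $\sum_{n=0}^d (\E[|J_n(f_n)|^p])^{1/p}$ by a constant (depending only on $d$) times $(\E[|F|^p])^{1/p}$. This is the only step that requires a small argument rather than a pure quotation, and it is where I'd spend the care: the components $J_n(f_n)$ are orthogonal in $L^2$, so by the Stroock-type isometry each $\E[J_n(f_n)^2]$ is controlled by $\E[F^2]$, and since $p\geq 2$ we have $\E[J_n(f_n)^2]^{1/2}\leq \E[F^2]^{1/2}\leq \E[|F|^p]^{1/p}$; combining this with the lower-order hypercontractivity estimate $\E[|J_n(f_n)|^p]^{1/p}\leq (p-1)^{n/2}\E[J_n(f_n)^2]^{1/2}$ gives $\E[|J_n(f_n)|^p]^{1/p}\leq (p-1)^{d/2}\E[|F|^p]^{1/p}$, and summing over $n=0,\dots,d$ produces a constant $(d+1)(p-1)^{d/2}$. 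A cleaner alternative that avoids any $p$-dependence in the constant is to use that the orthogonal projection $F\mapsto J_n(f_n)$ onto the $n$-th chaos is bounded on $L^p(\Omega)$ with norm depending only on $n$ and $d$ (again a consequence of hypercontractivity/Riesz-type bounds for the discrete OU semigroup), so that $\E[|J_n(f_n)|^p]^{1/p}\leq C_{n}\,\E[|F|^p]^{1/p}$; then $C:=\sum_{n=0}^d C_n$ depends only on $d$. I would phrase it this way to keep the constant independent of $p$ and $q$ as the statement requires.

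The main obstacle is purely bookkeeping: making sure the constant genuinely depends only on $d$ and not on $p$ or $q$, which forces one to separate the $((q-1)/(p-1))^{d/2}$ factor cleanly (this is where all the $q$-dependence lives) and to absorb everything else into a $d$-dependent constant using the boundedness of the chaos projections on $L^p$; the underlying hypercontractivity inputs themselves are classical and can simply be cited.
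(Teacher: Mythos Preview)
Your argument is correct in outline, but it is a genuinely different route from the paper. The paper gives essentially no argument at all: it cites Theorem~3.2.5 in de~la~Pe\~na--Gin\'e for the case where $F$ depends on only finitely many Rademacher variables, and then passes to the general case by an approximation argument (citing Kwapie\'n--Woyczy\'nski). So what you have written is closer to a proof of the cited result than to the paper's proof of the lemma.

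Two points in your write-up deserve sharpening. First, the crucial claim that the chaos projection $F\mapsto J_n(f_n)$ is bounded on $L^p$ with norm depending only on $n$ and $d$ (and \emph{not} on $p$) is correct but not obvious, and your justification (``Riesz-type bounds'') is vague; the standard hypercontractive estimate only gives a bound $(p-1)^{n/2}$, which is $p$-dependent and is exactly why your first approach fails. A clean way to get the $p$-independent bound is to use that $T_\rho F=\sum_{n=0}^d \rho^n J_n(f_n)$ for the noise operator $T_\rho$, choose $d+1$ distinct values $\rho_0,\dots,\rho_d\in[0,1]$, invert the resulting Vandermonde system to write $J_n(f_n)=\sum_j c_{n,j}T_{\rho_j}F$, and then use that each $T_{\rho_j}$ is a contraction on every $L^p$. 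The constants $c_{n,j}$ depend only on $d$. Second, you do not address the passage from kernels of finite support to general $f_n\in\ell_0^2(\N)^{\circ n}$; the paper handles this separately by approximation, and you should at least remark that the finite-support case extends by truncating the kernels and taking limits.
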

\begin{proof}
For $F$ depending only on finitely many Rademacher variables this is Theorem 3.2.5 in \cite{PenGin}. In the general case, the assertion follows by means of an approximation argument and Corollary 0.2.1 in \cite{KwaWoy}.
\end{proof}

\begin{lemma}
Let $F = J_2(f)$ with $f \in \ell_0^2(\N)^{\circ 2}$ such that $\E[F^2] = 1$. Then,
\begin{align}
\E[F^4] &= 3 + 32\lnorm{4}{2}{f}^4 + 48\left( \lnorm{2}{2}{\left( f \star_1^1 f \right) \1_{\Delta_2}}^2 - \lnorm{2}{2}{\left( f \star_1^1 f \right) \1_{\Delta_2^c}}^2 \right). \label{4th moment}
\end{align}
\end{lemma}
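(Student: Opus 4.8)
The plan is to compute $\E[F^4]=\E[(F^2)^2]$ by first expanding $F^2=J_2(f)^2$ with the multiplication formula \eqref{Multiplication formula} and then invoking the isometry relation \eqref{Isometric relation} together with the mutual orthogonality of discrete multiple integrals of different orders. Taking $q=p=2$ in \eqref{Multiplication formula}, and using that $f\star_1^1 f$ is already symmetric, that $f\star_2^2 f=\lnorm{2}{2}{f}^2$ because $f$ vanishes on the diagonal, and that $2\lnorm{2}{2}{f}^2=\E[F^2]=1$, one obtains
\[
F^2=J_4\big((\widetilde{f\star_0^0 f})\1_{\Delta_4}\big)+4\,J_2\big((f\star_1^1 f)\1_{\Delta_2}\big)+1\,.
\]
Squaring, taking expectations, discarding the vanishing mixed terms, and applying \eqref{Isometric relation} then yields
\[
\E[F^4]=24\,\lnorm{2}{4}{(\widetilde{f\star_0^0 f})\1_{\Delta_4}}^2+32\,\lnorm{2}{2}{(f\star_1^1 f)\1_{\Delta_2}}^2+1\,.
\]
The last two summands already have the shape required in \eqref{4th moment}, so everything comes down to evaluating the first one.

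For that I would write $\lnorm{2}{4}{(\widetilde{f\star_0^0 f})\1_{\Delta_4}}^2=\lnorm{2}{4}{\widetilde{f\star_0^0 f}}^2-\lnorm{2}{4}{(\widetilde{f\star_0^0 f})\1_{\Delta_4^c}}^2$ and treat the two parts separately. The full squared norm is handled by the identity \eqref{Taqqu} with $q=2$, which, using $2\lnorm{2}{2}{f}^2=1$, gives $24\,\lnorm{2}{4}{\widetilde{f\star_0^0 f}}^2=2+16\,\lnorm{2}{2}{f\star_1^1 f}^2$; here one further splits $\lnorm{2}{2}{f\star_1^1 f}^2=\lnorm{2}{2}{(f\star_1^1 f)\1_{\Delta_2}}^2+\lnorm{2}{2}{(f\star_1^1 f)\1_{\Delta_2^c}}^2$.

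The main obstacle, and the source of the terms absent in the Gaussian or Poisson setting, is the off-diagonal quantity $\lnorm{2}{4}{(\widetilde{f\star_0^0 f})\1_{\Delta_4^c}}^2$. I would start from the explicit expression
\[
\widetilde{f\star_0^0 f}(i_1,i_2,i_3,i_4)=\tfrac13\big(f(i_1,i_2)f(i_3,i_4)+f(i_1,i_3)f(i_2,i_4)+f(i_1,i_4)f(i_2,i_3)\big)\,,
\]
valid because $f$ is symmetric in its two entries, so the $4!$ permutations in the symmetrization collapse onto the three pairings of $\{1,2,3,4\}$. Since $f$ vanishes on the diagonal, a short case analysis shows that a tuple $(i_1,i_2,i_3,i_4)\in\Delta_4^c$ contributes a nonzero value only when its multiset of entries has the form $\{a,a,b,c\}$ with $a,b,c$ pairwise distinct, in which case $\widetilde{f\star_0^0 f}$ equals $\tfrac23 f(a,b)f(a,c)$, or the form $\{a,a,b,b\}$ with $a\neq b$, in which case it equals $\tfrac23 f(a,b)^2$; every other collision pattern forces a factor $f(i,i)=0$. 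Counting the six choices for the position of the repeated value in the first case and the three pairings of the four positions in the second, squaring, summing, and then using $\sum_{a}\big((f\star_1^1 f)(a,a)\big)^2=\lnorm{2}{2}{(f\star_1^1 f)\1_{\Delta_2^c}}^2$ together with $\sum_{a,b}f(a,b)^4=\lnorm{4}{2}{f}^4$, I expect the off-diagonal norm to come out as
\[
\lnorm{2}{4}{(\widetilde{f\star_0^0 f})\1_{\Delta_4^c}}^2=\tfrac83\,\lnorm{2}{2}{(f\star_1^1 f)\1_{\Delta_2^c}}^2-\tfrac43\,\lnorm{4}{2}{f}^4\,.
\]

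Substituting the pieces back gives
\[
24\,\lnorm{2}{4}{(\widetilde{f\star_0^0 f})\1_{\Delta_4}}^2=2+16\,\lnorm{2}{2}{(f\star_1^1 f)\1_{\Delta_2}}^2-48\,\lnorm{2}{2}{(f\star_1^1 f)\1_{\Delta_2^c}}^2+32\,\lnorm{4}{2}{f}^4\,,
\]
and adding $32\,\lnorm{2}{2}{(f\star_1^1 f)\1_{\Delta_2}}^2+1$ produces precisely the right-hand side of \eqref{4th moment}. The only genuinely delicate step is the bookkeeping of the collision patterns in the off-diagonal sum; everything else is a mechanical application of the multiplication formula, the isometry relation, and \eqref{Taqqu}.
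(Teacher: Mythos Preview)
Your argument is correct and follows essentially the same route as the paper: expand $F^2$ via the multiplication formula, apply isometry, split $\lnorm{2}{4}{(\widetilde{f\star_0^0 f})\1_{\Delta_4}}^2$ into the full norm (handled by \eqref{Taqqu}) minus the diagonal part, and evaluate the latter by a case analysis on the collision patterns of $(i_1,i_2,i_3,i_4)\in\Delta_4^c$, arriving at $\tfrac83\lnorm{2}{2}{(f\star_1^1 f)\1_{\Delta_2^c}}^2-\tfrac43\lnorm{4}{2}{f}^4$. The only cosmetic difference is that the paper parametrizes the off-diagonal sum by representative tuples $(i_1,i_1,i_2,i_3)$ and $(i_1,i_1,i_2,i_2)$ with explicit multiplicities $6$ and $3$, whereas you phrase it in terms of multisets; be a little careful that the ``six'' and ``three'' in your description really match the orbit sizes once you fix how you sum over $(a,b,c)$, since the $\{a,a,b,c\}$ pattern has $12$ ordered tuples and the $\{a,a,b,b\}$ pattern has $6$.
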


\begin{proof}
From the multiplication formula \eqref{Multiplication formula}, we have
\begin{align}
F^2 = J_4 \big( ( \widetilde{f \star_0^0 f} ) \1_{\Delta_4} \big) + 4J_2\big( ( f \star_1^1 f ) \1_{\Delta_2} \big) + 2\lnorm{2}{2}{f}^2. \notag
\end{align}

Now the isometry of multiple stochastic integrals \eqref{Isometric relation} and relation \eqref{Taqqu} yield
\begin{align}
\E[F^4] &= 24 \, \big\| ( \widetilde{f \star_0^0 f} ) \1_{\Delta_4} \big\|_{\ell^2(\N)^{\otimes 4}}^2 + 32\lnormb{2}{2}{( f \star_1^1 f ) \1_{\Delta_2}}^2 + 4\lnorm{2}{2}{f}^4 \notag\\
&= 24 \, \big\| \widetilde{f \star_0^0 f} \big\|_{\ell^2(\N)^{\otimes 4}}^2 - 24 \, \big\| ( \widetilde{f \star_0^0 f} ) \1_{\Delta_4^c} \big\|_{\ell^2(\N)^{\otimes 4}}^2 + 32\lnormb{2}{2}{\left( f \star_1^1 f \right) \1_{\Delta_2}}^2 \notag\\
&\qquad\qquad+ 4\lnorm{2}{2}{f}^4 \notag\\
&= 12\lnormb{2}{2}{f}^4 + 48\lnormb{2}{2}{( f \star_1^1 f ) \1_{\Delta_2}}^2 + 16\lnormb{2}{2}{( f \star_1^1 f ) \1_{\Delta_2^c}}^2 \notag\\
&\qquad\qquad-24 \, \big\| ( \widetilde{f \star_0^0 f} ) \1_{\Delta_4^c} \big\|_{\ell^2(\N)^{\otimes 4}}^2\,. \label{E[F^4]}
\end{align}
By definition we have
\[\big\| ( \widetilde{f \star_0^0 f} ) \1_{\Delta_4^c} \big\|_{\ell^2(\N)^{\otimes 4}}^2 
= \sum_{i_1,i_2,i_3,i_4} \big( ( \widetilde{f \star_0^0 f} )(i_1,i_2,i_3,i_4) \big)^2 \1_{\Delta_4^c}(i_1,i_2,i_3,i_4),\]
with the indices running over all non-negative integers.
Since $f$ is symmetric and vanishes on diagonals, there are only two types of $4$-tuples $(i_1,i_2,i_3,i_4)$ for which $(\widetilde{f \star_0^0 f})(i_1,i_2,i_3,i_4)$ can be non-zero. They are of the form $(i_1,i_1,i_2,i_3)$ and $(i_1,i_1,i_2,i_2)$ -- up to permutation of the entries. Hence,
\begin{align}
&\sum_{i_1,i_2,i_3,i_4} \big( ( \widetilde{f \star_0^0 f} )(i_1,i_2,i_3,i_4) \big)^2 \1_{\Delta_4^c}(i_1,i_2,i_3,i_4)\notag\\
&= 6\sum_{i_1,i_2,i_3 \atop i_2\neq i_3} \big( ( \widetilde{f \star_0^0 f} )(i_1,i_1,i_2,i_3) \big)^2 + 3\sum_{i_1,i_2} \big( ( \widetilde{f \star_0^0 f} )(i_1,i_1,i_2,i_2) \big)^2 \notag\\
&= 6\sum_{i_1,i_2,i_3} \big( ( \widetilde{f \star_0^0 f} )(i_1,i_1,i_2,i_3) \big)^2 - 3\sum_{i_1,i_2} \big( ( \widetilde{f \star_0^0 f} )(i_1,i_1,i_2,i_2) \big)^2\,.\label{Contr. comb.}
\end{align}

Note that for every $4$-tupel $(i_1,i_2,i_3,i_4)$ there are $2^{3}$ permutations $\sigma$ such that $$f(i_1,i_2)f(i_3,i_4)=f(i_{\sigma(1)},i_{\sigma(2)})f(i_{\sigma(3)},i_{\sigma(4)})\,.$$ Using this together with the symmetry of $f$, we deduce that
\begin{align}
( \widetilde{f \star_0^0 f} )(i_1,i_2,i_3,i_4)
&= \frac{2^3}{4!}\big( f(i_1,i_2)f(i_3,i_4) + f(i_1,i_3)f(i_2,i_4) + f(i_1,i_4)f(i_2,i_3)\big) \label{Contr. symm.}
\end{align}
for all $i_1,i_2,i_3,i_4\in\N$.
Combining \eqref{Contr. comb.} and \eqref{Contr. symm.}, we get
\begin{align}
\big\| ( \widetilde{f \star_0^0 f} ) \1_{\Delta_4^c} \big\|_{\ell^2(\N)^{\otimes 4}}^2 &= \frac{8}{3} \sum_{i_1,i_2,i_3} (f(i_1,i_2) f(i_1,i_3))^2 - \frac{4}{3} \sum_{i_1,i_2} f(i_1,i_2)^4 \notag\\
&= \frac{8}{3} \lnormb{2}{2}{( f \star_1^1 f ) \1_{\Delta_2^c}}^2 - \frac{4}{3} \lnorm{4}{2}{f}^4\,. \label{Contr. equality}
\end{align}

Plugging \eqref{Contr. equality} into \eqref{E[F^4]}, we conclude that
\begin{align*}
\E[F^4] &= 12\lnorm{2}{2}{f}^4 + 32\lnorm{4}{2}{f}^4 \notag\\
&\qquad\qquad+ 48\big( \lnorm{2}{2}{( f \star_1^1 f ) \1_{\Delta_2}}^2 - \lnorm{2}{2}{( f \star_1^1 f ) \1_{\Delta_2^c}}^2 \big)\,.
\end{align*}

Equation \eqref{4th moment} now follows immediately from our assumption that $\E [F^2] = 2\lnorm{2}{2}{f}^2 = 1$.
\end{proof}

\begin{proof}[Proof of Theorem \ref{prop:Necessary}]
Since $F_n$ converges in distribution to a standard Gaussian random variable and since $\sup_{n \in \N} \E \left[ \left| F_n \right|^q \right] < \infty$ for all $q \geq 2$, due to the hypercontractivity property stated in Lemma \ref{Hypercontractivity}, we have $\E[F_n^4] \rightarrow 3$. 
The statement now follows from \eqref{4th moment}.
\end{proof}

\begin{remark}
It is a natural question whether the proof of Theorem \ref{BerryEsseenMultipleIntegrals} can be modified in such a way that it involves differences of the type 
\begin{equation}\label{eq:BegruendungBSP}
\lnormb{2}{2(q-r)}{(f\star_r^rf)\1_{\Delta_{2(q-r)}}}-\lnormb{2}{2(q-r)}{(f\star_r^rf)\1_{\Delta_{2(q-r)}^c}}\,.
\end{equation}
rather than the contraction norms $\|f\star_r^rf\|_{\ell^2(\N)^{\otimes 2(q-r)}}$.
We doubt that this is possible with the usual Malliavin-Stein technique, since already the key term $A_1(F_n)$ at \eqref{eq:A1} contains only the off-diagonal term in \eqref{eq:BegruendungBSP}. For Example \ref{ex:Gegenbsp} with $F_n=J_2(f_n)$, $A_1(F_n)$ reduces to $$A_1(F_n)^2=\E[(1-{1\over 2}\|DF_n\|_{\ell^2(\N)})^2]=8\lnorm{2}{2}{(f_n\star_1^1 f_n)\,\1_{\Delta_2}}^2={1\over 16}{n-2\over n-1}\,.$$ This converges to ${1\over 16}\neq 0$, as $n\to\infty$, and since the other terms $A_2(F_n)$, $A_3(F_n)$ and $A_4(F_n)$ in the Malliavin-Stein bound (recall \eqref{eq:A2}, \eqref{eq:A3} and \eqref{eq:A4}) are non-negative, $d_K(F_n,N)$ does not converge to zero. Since the variance comparison in terms of $A_1(F_n)$ lies at the heart of the Malliavin-Stein method, new ideas are necessary to overcome this difficulty.
\end{remark}

\subsection{Sums of single and double integrals}

For one of our applications below we need an estimate for the Kolmogorov distance between a Rademacher functional of the type $$F:=J_1(f^{(1)})+J_2(f^{(2)})\quad\text{with}\quad f^{(1)}\in\ell^2(\Z)\,,f^{(2)}\in\ell^2_0(\Z)^{\circ 2}$$ and a standard Gaussian random variable $N$. In other words, $F$ is the sum of an element of the first and an element of the second Rademacher chaos. From a technical point of view, such functionals are more elaborate compared to elements of a single Rademacher chaos. We take up this point and develop a bound for $d_K(F,N)$.

As indicated in \cite{ReiPec}, the results of discrete Malliavin calculus as outlined above and Stein's method extend to Rademacher random variables indexed by $\Z$. In this section and also in our first application presented in Section \ref{sec:applications} below we make use of this extension in order to avoid boundary effects, following thereby \cite{ReiPec}.

\begin{theorem}\label{thm:SumSingle+Double}
Let $F=J_1(f^{(1)})+J_2(f^{(2)})$ with $f^{(1)}\in\ell^2(\Z)$ and $f^{(2)}\in\ell^2_0(\Z)^{\circ 2}$, and let $N$ be a standard Gaussian random variable. Suppose that $\E[F^2]=1$, then
\begin{align*}
d_K(F,N) &\leq 3\|f^{(1)}\star_1^1f^{(2)}\|_{\ell^2(\Z)}+2\sqrt{2}\|(f^{(2)}\star_1^1f^{(2)})\1_{\Delta_2}\|_{\ell^2(\Z)^{\otimes 2}}+2\|f^{(1)}\|_{\ell^4(\Z)}^2\\
&\qquad +(4\sqrt{2}+12)\|(f^{(2)}\star_1^1 f^{(2)})\1_{\Delta_2^c}\|_{\ell^2(\Z)}+(2\sqrt{13}+6)\,\bigg(\sum_{k,j\in\Z}f^{(1)}(k)^2\,f^{(2)}(k,j)^2\bigg)^{\frac{1}{2}}\\
&\qquad+2\sum_{k\in\Z}\Big(|f^{(1)}(k)|+2\sum_{j\in\Z}|f^{(2)}(j,k)|\Big)^3\,.
\end{align*}
\end{theorem}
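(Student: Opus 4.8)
The plan is to feed $F=J_1(f^{(1)})+J_2(f^{(2)})$ into the abstract Kolmogorov bound of Theorem~\ref{mainthm} and to estimate the four resulting terms $A_1(F),\dots,A_4(F)$ one by one, exactly in the spirit of the proof of Theorem~\ref{BerryEsseenMultipleIntegrals}. As there, I would first assume that $f^{(1)}$ and $f^{(2)}$ have finite support and recover the general case at the very end through the martingale/truncation argument together with continuity of the gradient on a fixed chaos. The starting point is the computation of the Malliavin operators: $D_kF=f^{(1)}(k)+2J_1(f^{(2)}(\,\cdot\,,k))$ and, since $-L^{-1}F=\tfrac12 J_2(f^{(2)})+J_1(f^{(1)})$, also $-D_kL^{-1}F=f^{(1)}(k)+J_1(f^{(2)}(\,\cdot\,,k))$. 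The two structural facts I would keep exploiting are that the iterated gradients $D_kD_lF=2f^{(2)}(k,l)$ and $D_kD_lL^{-1}F=-f^{(2)}(k,l)$ are \emph{non-random}, and that $\max\{|D_kF|,|D_kL^{-1}F|\}\le\beta_k:=|f^{(1)}(k)|+2\sum_{j}|f^{(2)}(j,k)|$ pointwise.

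For $A_1(F)$ I would apply the multiplication formula \eqref{Multiplication formula} in the form $J_1(f^{(2)}(\,\cdot\,,k))^2=J_2\big((f^{(2)}(\,\cdot\,,k)\otimes f^{(2)}(\,\cdot\,,k))\1_{\Delta_2}\big)+\|f^{(2)}(\,\cdot\,,k)\|_{\ell^2(\Z)}^2$ and sum over $k$. Since $\sum_k f^{(1)}(k)^2+2\sum_k\|f^{(2)}(\,\cdot\,,k)\|_{\ell^2(\Z)}^2=\|f^{(1)}\|^2+2\|f^{(2)}\|^2=\E[F^2]=1$, this collapses to the identity $1-\langle DF,-DL^{-1}F\rangle_{\ell^2(\Z)}=-3\,J_1(f^{(1)}\star_1^1 f^{(2)})-2\,J_2\big((f^{(2)}\star_1^1 f^{(2)})\1_{\Delta_2}\big)$, so that, by Jensen's inequality, orthogonality of chaoses of different orders and the isometry \eqref{Isometric relation}, $A_1(F)\le\big(9\|f^{(1)}\star_1^1 f^{(2)}\|^2+8\|(f^{(2)}\star_1^1 f^{(2)})\1_{\Delta_2}\|^2\big)^{1/2}$; subadditivity of the square root then produces the first two terms of the bound. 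For $A_2(F)$ and $A_3(F)$ I would simply use the pointwise bounds $|D_kF|\le\beta_k$, $|D_kL^{-1}F|\le\beta_k$ together with $\E|F|\le(\E[F^2])^{1/2}=1$, which give $A_2(F)\le\tfrac{\sqrt{2\pi}}{4}\sum_k\beta_k^3$ and $A_3(F)\le\sum_k\beta_k^3$, hence $A_2(F)+A_3(F)\le 2\sum_k\beta_k^3$ -- the last term.

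The main effort goes into $A_4(F)$. With $u_k:=(D_kF)\,|D_kL^{-1}F|$ one has $u_k$ independent of $X_k$ and $(D_k\1_{\{F>x\}})u_k\ge 0$, because $D_k\1_{\{F>x\}}$ carries the sign of $F_k^+-F_k^-=2D_kF$; Lemma~\ref{Integration by parts II} applied to the bounded functional $\1_{\{F>x\}}$ therefore yields $A_4(F)=2\sup_x\E[\1_{\{F>x\}}\delta(u)]\le 2\big(\E[\delta(u)^2]\big)^{1/2}$. Next apply the isometric formula \eqref{Isometry property}, $\E[\delta(u)^2]=\E\|u\|_{\ell^2(\Z)}^2+\E\big[\sum_{k,l}(D_ku_l)(D_lu_k)\big]\le\E\|u\|_{\ell^2(\Z)}^2+\E\big[\sum_{k,l}(D_ku_l)^2\big]$. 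The first term equals $\sum_k\E[(D_kF)^2(D_kL^{-1}F)^2]$, and expanding the degree-four polynomial in $J_1(f^{(2)}(\,\cdot\,,k))$ and using $\E[J_1(g)^4]=3\|g\|^4-2\|g\|_{\ell^4}^4$ bounds it by $\|f^{(1)}\|_{\ell^4}^4+13\sum_{k,j}f^{(1)}(k)^2f^{(2)}(k,j)^2+12\,\|(f^{(2)}\star_1^1 f^{(2)})\1_{\Delta_2^c}\|_{\ell^2(\Z)}^2$, where one uses that the diagonal of $f^{(2)}\star_1^1 f^{(2)}$ is the map $i\mapsto\sum_j f^{(2)}(i,j)^2$. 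For the double-sum term one invokes the product rule \eqref{Product formula gradient operator} on $D_k\big((D_lF)|D_lL^{-1}F|\big)$; since $D_kD_lF$ and $D_kD_lL^{-1}F$ are the deterministic constants $\pm f^{(2)}(k,l)$, the absolute value is handled by splitting on the sign of $D_lL^{-1}F$, exactly as the term $\delta((DF)|DF|)$ is handled in the proof of Theorem~\ref{BerryEsseenMultipleIntegrals}, and the resulting expressions again reduce to multiples of $\sum_{k,j}f^{(1)}(k)^2f^{(2)}(k,j)^2$ and $\|(f^{(2)}\star_1^1 f^{(2)})\1_{\Delta_2^c}\|_{\ell^2(\Z)}^2$. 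Taking square roots and collecting everything then yields the three terms $2\|f^{(1)}\|_{\ell^4}^2$, $(4\sqrt2+12)\|(f^{(2)}\star_1^1 f^{(2)})\1_{\Delta_2^c}\|_{\ell^2(\Z)}$ and $(2\sqrt{13}+6)\big(\sum_{k,j}f^{(1)}(k)^2f^{(2)}(k,j)^2\big)^{1/2}$.

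The genuine obstacle, I expect, is precisely this last estimate of $\E\big[\sum_{k,l}(D_ku_l)(D_lu_k)\big]$: the absolute value $|D_lL^{-1}F|$ sitting inside $u$ blocks any direct use of the Rademacher product formula, so one must exploit carefully that the iterated gradients of $F$ and of $L^{-1}F$ are deterministic and proportional to $f^{(2)}(k,l)$, both in order to legitimise the sign-splitting and in order to keep every constant attached to the three small norms $\|(f^{(2)}\star_1^1 f^{(2)})\1_{\Delta_2}\|$, $\|(f^{(2)}\star_1^1 f^{(2)})\1_{\Delta_2^c}\|$ and $(\sum_{k,j}f^{(1)}(k)^2f^{(2)}(k,j)^2)^{1/2}$ rather than on cruder, non-vanishing quantities; the cancellation $\E[(D_k((D_lF)^2))^2]=4\E[(D_lF)^2(D_kD_lF)^2]-4\E[(D_kD_lF)^4]$ already used for Theorem~\ref{BerryEsseenMultipleIntegrals} will be essential here to get workable constants. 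A secondary technical point is justifying the interchange of summations and the application of \eqref{Isometry property} when the kernels have infinite support, which is covered by the truncation argument employed there.
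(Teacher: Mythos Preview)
Your plan is correct and essentially coincides with the paper's own proof. You decompose $d_K(F,N)$ via Theorem~\ref{mainthm} into $A_1,\dots,A_4$; you handle $A_1$ by the multiplication formula exactly as in \cite[Proposition~5.1]{ReiPec} (which the paper simply cites), treat $A_2$ and $A_3$ by the same crude pointwise bound $|D_kF|,|D_kL^{-1}F|\le\beta_k$, and attack $A_4$ through Lemma~\ref{Integration by parts II}, the isometric formula \eqref{Isometry property}, and the product rule \eqref{Product formula gradient operator}, arriving at the identical explicit computations for $\E[(D_kF)^2(D_kL^{-1}F)^2]$ and $\E[(D_k((D_lF)|D_lL^{-1}F|))^2]$ that yield the constants $2$, $2\sqrt{13}+6$, and $4\sqrt2+12$. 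Your identification of the absolute value inside $u_l=(D_lF)|D_lL^{-1}F|$ as the crux, to be resolved via the determinism of $D_kD_lF$ and $D_kD_lL^{-1}F$ and a sign-splitting analogous to the $(D_lF)|D_lF|$ step in Theorem~\ref{BerryEsseenMultipleIntegrals}, is precisely how the paper proceeds (it passes from $|D_lL^{-1}F|$ to $D_lL^{-1}F$ before applying the product formula).
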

\begin{remark}
We should compare our result to the bound of Proposition 5.1 in \cite{ReiPec}. It has been shown there that for a twice differentiable function $g:\R\to\R$ with bounded derivatives up to order two the estimate
\begin{align*}
\big|\E[g(F)]-\E[g(N)]\big| &\leq C(g)\,\big(3\|f^{(1)}\star_1^1 f^{(2)}\|_{\ell^2(\Z)}+2\sqrt{2}\|(f^{(2)}\star_1^1 f^{(2)})\1_{\Delta_2}\|_{\ell^2(\Z)^{\otimes 2}}\big)\\
&\qquad+{160\over 3}\|g''\|_\infty\sum_{k\in\Z}\Big[f^{(1)}(k)^4+16\Big(\sum_{i\in\Z}|f^{(2)}(i,k)|\Big)^4\Big]
\end{align*}
holds with $C(g):=\min\{4\|g'\|_\infty,\|g''\|_\infty\}$. The fact that our bound is more involved is not surprising since already the abstract bound in Theorem \ref{mainthm} contains more terms compared to \eqref{eq:d3EstimateNRP} from \cite{ReiPec}, because our bound is based on the fundamental theorem of calculus rather than on an approximate chain rule. As already seen in Theorem \ref{mainthm} this also leads to different exponents in our bound compared to the results from \cite{ReiPec}.
\end{remark}
\begin{proof}[Proof of Theorem \ref{thm:SumSingle+Double}.]
In view of our general Berry-Esseen estimate in Theorem \ref{mainthm} we have to bound the quantities $A_1(F)$--$A_4(F)$ given by \eqref{eq:A1original}--\eqref{eq:A4original} with $F=J_1(f^{(1)})+J_2(f^{(2)})$ there. The first term $A_1(F)$ has already been addressed in \cite[Proposition 5.1]{ReiPec}:
\begin{equation}\label{eq:A1Sum}
A_1(F) \leq 3\|f^{(1)}\star_1^1f^{(2)}\|_{\ell^2(\Z)}+2\sqrt{2}\|(f^{(2)}\star_1^1f^{(2)})\1_{\Delta_2}\|_{\ell^2(\Z)^{\otimes 2}}\,.
\end{equation}
Using the estimates $$|D_kF|\leq|f^{(1)}(k)|+2\sum_{j\in\Z}|f^{(2)}(j,k)|\quad\text{and}\quad|D_kL^{-1}F|\leq|f^{(1)}(k)|+2\sum_{j\in\Z}|f^{(2)}(j,k)|\,,$$ valid for all $k\in\N$, we find that (replacing thereby also $\sqrt{2\pi}/4$ by $1$)
\begin{equation}\label{eq:A2Sum}
A_2(F)\leq \sum_{k\in\Z}\Big(|f^{(1)}(k)|+2\sum_{j\in\Z}|f^{(2)}(j,k)|\Big)^3
\end{equation}
and, in addition, using the Cauchy-Schwarz inequality together with our assumption that $\E[F^2]=1$,
\begin{equation}\label{eq:A3Sum}
A_3(F)\leq \sum_{k\in\Z}\Big(|f^{(1)}(k)|+2\sum_{j\in\Z}|f^{(2)}(j,k)|\Big)^3\,.
\end{equation}
It remains to consider the term $A_4(F)$. Following the strategy already used in the proof of Theorem \ref{BerryEsseenMultipleIntegrals} we find that
\begin{align*}
&\E[\langle(DF)D\1_{\{F>x\}},(DF)|DL^{-1}F|\rangle_{\ell^2(\Z)}]  \leq \big(\E[(\delta((DF)|DL^{-1}F|))^2]\big)^{1/2}\\
& \leq \big(\E[\|(DF)|DL^{-1}F|\|_{\ell^2(\Z)}^2]\big)^{\frac 12} + \bigg(\E\Big[\sum_{k,l\in\Z}(D_k((D_lF)|D_lL^{-1}F|))\,(D_l((D_kF)|D_kL^{-1}F|))\Big]\bigg)^{\frac 12}\\
&\leq \big(\E[\|(DF)|DL^{-1}F|\|_{\ell^2(\Z)}^2]\big)^{\frac 12} + \bigg(\E\Big[\sum_{k,l\in\Z}(D_k((D_lF)|D_lL^{-1}F|))^2\Big]\bigg)^{\frac 12}\,,
\end{align*}
where we have used the isometric relation \eqref{Isometric relation} for the divergence operator. Now, let us consider an individual summand
$$\E[(D_k(D_lF)|D_lL^{-1}F|))^2] = \E[(D_k((D_lF)(D_lL^{-1}F)))^2]\,.$$
Using the product formula \eqref{Product formula gradient operator} we see that this equals $$\E\big[\big((D_kD_lF)(D_lL^{-1}F)+(D_lF)(D_kD_lL^{-1}F)-2X_k(D_kD_lF)(D_kD_lL^{-1}F)\big)^2\big]\,.$$
Taking into account that
$$D_kF=f^{(1)}(k)+2\, J_1(f^{(2)}(\,\cdot\,,k))\quad\text{and}\quad D_kL^{-1}F=-f^{(1)}(k)- J_1(f^{(2)}(\,\cdot\,,k))\,,\quad k\in\N\,,$$
we get, after simplifications using the isometry property \eqref{Isometry property} of stochastic integrals,
\begin{align*}
\E[(D_k((D_lF)(D_lL^{-1}F)))^2] &= 9f^{(1)}(l)^2f^{(2)}(k,l)^2+16 f^{(2)}(k,l)^2\,\sum_{j\in\Z} f^{(2)}(j,l)^2 - 16 f^{(2)}(k,l)^4\,\\
&\leq  9f^{(1)}(l)^2f^{(2)}(k,l)^2+16 f^{(2)}(k,l)^2\,\sum_{j\in\Z} f^{(2)}(j,l)^2.
\end{align*}
Next, we notice that $\E[J_1(f^{(2)}(\,\cdot\,,k))^3]=0$ and $$\E[J_1(f^{(2)}(\,\cdot\,k))^4]=2\,\sum_{i,j\in\Z\atop i\neq j}f^{(2)}(i,k)^2f^{(2)}(j,k)^2+\Big(\sum_{j\in\Z}f^{(2)}(j,k)\Big)^2\,,$$ for all $k\in\Z$, as a consequence of the multiplication formula \eqref{Multiplication formula}. Using this together with the representations of $D_kF$ and $D_kL^{-1}F$ from above, we see that
\begin{align*}
\E[(D_kF)^2(D_kL^{-1}F)^2] &= f^{(1)}(k)^4+13f^{(1)}(k)^2\,\sum_{j\in\Z} f^{(2)}(j,k)^2\\
&\qquad+4\Big(2\,\sum_{i,j\in\Z\atop i\neq j} f^{(2)}(i,k)^2f^{(2)}(j,k)^2+\Big(\sum_{j\in\Z}f^{(2)}(j,k)^2\Big)^2\,\Big)\,.
\end{align*}
Hence, we find that $A_4(F)$ is bounded by
\begin{align*}
A_4(F)&\leq 2\,\big(\E[\|(DF)|DL^{-1}F|\|_{\ell^2(\Z)}^2]\big)^{\frac 12} + 2\bigg(\E\Big[\sum_{k,l\in\Z}(D_k((D_lF)|D_lL^{-1}F|))^2\Big]\bigg)^{\frac 12}\\
&\leq 2\bigg(\sum_{k\in\Z} f^{(1)}(k)^4\bigg)^{\frac 12}+2\bigg(13\,\sum_{k,j\in\Z} f^{(1)}(k)^2f^{(2)}(j,k)^2\bigg)^{\frac 12}\\
&\qquad +4\bigg(2\,\sum_{k\in\Z}\sum_{i,j\in\Z\atop i\neq j} f^{(2)}(i,k)^2f^{(2)}(j,k)^2\bigg)^{\frac 12}\\
&\qquad+4\,\bigg(\sum_{k\in\Z}\Big(\sum_{j\in\Z} f^{(2)}(j,k)^2\Big)^2\bigg)^{\frac 12}+6\bigg(\sum_{k,l\in\Z} f^{(1)}(l)^2f^{(2)}(k,l)^2\bigg)^{\frac 12}\\
&\qquad\qquad+8\,\bigg(\sum_{k,j,l\in\Z} f^{(2)}(k,l)^2f^{(2)}(j,k)^2\bigg)^{\frac 12}\\
&\leq 2\bigg(\sum_{k\in\Z} f^{(1)}(k)^4\bigg)^{\frac 12}+(2\sqrt{13}+6)\,\bigg(\sum_{k,l\in\Z} f^{(1)}(k)^2f^{(2)}(l,k)^2\bigg)^{\frac 12}\\
&\qquad (4\sqrt{2}+12)\,\bigg(\sum_{k\in\Z}\Big(\sum_{i\in\Z} f^{(2)}(i,k)^2\Big)^2\bigg)^{\frac 12}\,.\\
\end{align*}
Re-writing (whenever this is possible) the sums in terms of norms of suitable contractions completes the proof.
\end{proof}

\section{Multivariate limit theorems for Rademacher functionals}\label{sec:multivariate}

In the present section we compare a vector of Rademacher functionals with a multivariate Gaussian random variable. Recall that for this purpose we use the $d_4$-distance introduced at \eqref{eq:d4Definition}.

\begin{theorem}\label{thm:MultiGeneral}
Fix an integer $d\geq 2$, let $F_1,\ldots,F_d\in{\rm dom}(D)$ be Rademacher functionals satisfying $\E[F_i]=0$ and $\E[\|DF_i\|^4_{\ell^4(\N)}]<\infty$ for all $i\in\{1,\ldots,d\}$ and define the random vector ${\bf F}:=(F_1,\ldots,F_d)$. Moreover, let $\Sigma=(\sigma_{ij})_{i,j=1}^d$ be a positive semi-definite symmetric $(d\times d)$-matrix and ${\bf N}$ a centred $d$-dimensional Gaussian random vector with covariance matrix $\Sigma$. Then
\begin{equation}\label{eq:d4BoundGeneral}
\begin{split}
d_4({\bf F},{\bf N}) &\leq {d\over 2}\Big(\sum_{i,j=1}^d\E\big[(\sigma_{ij}-\langle DF_j,-DL^{-1}F_i\rangle_{\ell^2(\N)})^2\big]\Big)^{1/2}\\
&\qquad\qquad\qquad\qquad+{5\over 3}\E\big[\big\langle\big(\sum_{j=1}^d|DF_j|\big)^3,\sum_{i=1}^d|DL^{-1}F_i|\big\rangle_{\ell^2(\N)}\big]\,.
\end{split}
\end{equation}
\end{theorem}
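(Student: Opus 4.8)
The plan is to use the multivariate Stein equation \eqref{steineqMULTI} together with the integration-by-parts formula, following the same scheme as in the one-dimensional case (Theorem \ref{mainthm}) but now exploiting that we only need test functions $g$ with $M_i(g)\le 1$ for $i\in\{1,\dots,4\}$, so that the Stein solution $f_g$ satisfies $M_2(f_g)\le 1$ and $M_3(f_g)\le 2/3$. Concretely, for a fixed such $g$ and its Stein solution $f:=f_g$, I would write
\[
\E[g({\bf F})]-\E[g({\bf N})]=\E\Big[\langle {\bf F},\nabla f({\bf F})\rangle_{\R^d}-\langle\Sigma,{\rm Hess}\,f({\bf F})\rangle_{H.S.}\Big]
=\E\Big[\sum_{i=1}^d F_i\,\partial_i f({\bf F})-\sum_{i,j=1}^d\sigma_{ij}\,\partial_{ij}f({\bf F})\Big].
\]
For each $i$, replace $F_i=\delta(-DL^{-1}F_i)$ (valid since $\E[F_i]=0$) and apply the integration-by-parts formula \eqref{intbyparts} to get $\E[F_i\,\partial_i f({\bf F})]=\E[\langle D(\partial_i f({\bf F})),-DL^{-1}F_i\rangle_{\ell^2(\N)}]$.

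\textbf{Rewriting the gradient via the fundamental theorem of calculus.} Exactly as in the proof of Theorem \ref{mainthm}, I would expand $D_k(\partial_i f({\bf F}))$ using the pathwise formula $D_k G=\tfrac12(G_k^+-G_k^-)$ and the identity $F_{j,k}^\pm-F_j=\pm 2(D_kF_j)\1_{\{X_k=\mp1\}}$. Writing $\partial_i f({\bf F}_k^\pm)-\partial_i f({\bf F})$ as a line integral of $\nabla\partial_i f$ along the segment joining ${\bf F}$ to ${\bf F}_k^\pm$ produces a main term $\sum_{j=1}^d\partial_{ij}f({\bf F})\,D_kF_j$ plus a remainder controlled by $M_3(f)$. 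Summing the main term over $i$ and $k$ yields $\sum_{i,j}\E[\partial_{ij}f({\bf F})\langle DF_j,-DL^{-1}F_i\rangle_{\ell^2(\N)}]$, which is compared with the Hessian term $\sum_{i,j}\sigma_{ij}\E[\partial_{ij}f({\bf F})]$; since $M_2(f)\le 1$, the difference is bounded by $\sum_{i,j}\E|\sigma_{ij}-\langle DF_j,-DL^{-1}F_i\rangle_{\ell^2(\N)}|$, and a Cauchy–Schwarz step in $L^2(\Omega)$ together with the crude bound $d/2$ on the number/combinatorics of the $\partial_{ij}f$-factors (using $M_2(f_g)\le 2/2=1$) gives the first summand in \eqref{eq:d4BoundGeneral} after one more Cauchy–Schwarz over the index pairs.

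\textbf{Estimating the Taylor remainder.} The remainder term has the form $\tfrac12\sum_{i,k}(-D_kL^{-1}F_i)\int(\dots)\,dt$ where the integrand is a second-order difference of $\nabla\partial_i f$; using $M_3(f_g)\le 2/3$ and the fact that the integration range has length $2|D_kF_j|$ while the integrand is dominated by $M_3(f)\sum_{j}|D_kF_j|$, one bounds $|$remainder$|$ by $\tfrac{2}{3}\cdot\tfrac12\cdot 2\sum_k\big(\sum_j|D_kF_j|\big)^2\sum_i|D_kL^{-1}F_i|=\tfrac{2}{3}\sum_k\big(\sum_j|D_kF_j|\big)^2\sum_i|D_kL^{-1}F_i|$; I expect the precise bookkeeping of the constant $5/3$ to come out after tracking that each step gives a $\big(\sum_j|D_kF_j|\big)$ from the interval length and two more from the two increments inside the second-order difference, producing the cube $\big(\sum_j|D_kF_j|\big)^3$ against $\sum_i|D_kL^{-1}F_i|$ as in the statement. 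Finally, take the supremum over all admissible $g$.

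\textbf{Main obstacle.} The genuinely delicate point is justifying the integration-by-parts step: one must check that $\partial_i f({\bf F})$ lies in $\mathrm{dom}(D)$ (or argue via the pathwise operator $D'$), which requires the boundedness of $\nabla f_g$ — this follows from $M_1(f_g)\le 2$ together with boundedness of $\partial_i g$, but it is exactly here that the smoothness hypothesis $M_i(g)\le1$ for $i$ up to $4$ is being used, and care is needed because ${\bf F}$ need not be bounded. A secondary technical nuisance is the interchange of summation over $k$ with expectation in the remainder term; this is handled by the same Fubini/absolute-integrability argument as in Theorem \ref{mainthm}, using $\E[\|DF_i\|_{\ell^4(\N)}^4]<\infty$ to control $\sum_k\big(\sum_j|D_kF_j|\big)^3\sum_i|D_kL^{-1}F_i|$ via Hölder's inequality (noting $\|DL^{-1}F_i\|_{\ell^2}\le\|DF_i\|_{\ell^2}$ and $\|DF_i\|_{\ell^2}\le\|DF_i\|_{\ell^{4/3}}$-type bounds, or more directly the hypercontractivity that makes all these $\ell^4$-moments finite).
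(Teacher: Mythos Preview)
Your approach via the multivariate Stein equation is a natural alternative, but it differs from the paper's proof, which does \emph{not} use the Stein solution $f_g$ at all. The paper instead uses the smart-path interpolation $\Psi(t)=\E[g(\sqrt{t}\,\mathbf{N}+\sqrt{1-t}\,\mathbf{F})]$, bounds $|\Psi(0)-\Psi(1)|\le\sup_t|\Psi'(t)|$, handles the Gaussian part of $\Psi'(t)$ by Gaussian integration-by-parts, and handles the Rademacher part via a new multivariate approximate chain rule (Lemma~\ref{lem:Multi1}) combined with the discrete integration-by-parts formula. The constants $d/2$ and $5/3$ in the statement are artefacts of this route: the factor $\tfrac12$ comes from $\Psi'(t)=\tfrac{1}{2\sqrt t}A-\tfrac{1}{2\sqrt{1-t}}B$, and $\tfrac53=\tfrac12\cdot\tfrac{10}{3}\cdot\sup_{t\in(0,1)}(1-t)$ arises from the third-order remainder constant $\tfrac{10}{3}$ in Lemma~\ref{lem:Multi1} together with the scaling $M_3(g_i^{t,\mathbf b})\le(1-t)^{3/2}M_4(g)$. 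Your approach with $M_2(f_g)\le 1$ and $M_3(f_g)\le 2/3$ cannot reproduce these exact constants.

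More seriously, there is a genuine gap in your remainder analysis. A first-order Taylor expansion of $\partial_i f(\mathbf F_k^{\pm})-\partial_i f(\mathbf F)$ with second-order remainder produces, after taking $\tfrac12$ of the difference, a main term $\sum_j\partial_{ij}f(\mathbf F)\,D_kF_j$ and a remainder bounded by $M_3(f)\big(\sum_j|D_kF_j|\big)^{2}$, not cubed; your claim that ``the interval length and two more from the two increments'' yields $\big(\sum_j|D_kF_j|\big)^{3}$ is incorrect bookkeeping. To obtain the \emph{cubic} expression in \eqref{eq:d4BoundGeneral} one must expand to one further order, as the paper does in Lemma~\ref{lem:Multi1}, and then the second-order term
\[
-\tfrac12\sum_{i,j}\big(\partial_{ij}f(\mathbf F_k^+)-\partial_{ij}f(\mathbf F_k^-)\big)(D_kF_i)(D_kF_j)\,X_k
\]
is not a remainder at all: it must be shown to \emph{vanish} after pairing with $-D_kL^{-1}F_0$ and summing over $k$. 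This is the step where the hypothesis $\E[\|DF_i\|_{\ell^4(\N)}^4]<\infty$ is actually used---it justifies Fubini so that one may apply \cite[Lemma~2.13(1)]{ReiPec}, a Rademacher-specific identity saying that $\E[X_k\,u_k]=0$ whenever $u_k$ is independent of $X_k$. You do not mention this cancellation, and without it your approach yields a bound with $\big(\sum_j|D_kF_j|\big)^{2}$ in place of the cube, which neither implies nor is implied by the stated inequality.
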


The proof of Theorem \ref{thm:MultiGeneral} is given below. A particular case arises if each of the Rademacher functionals has the form of a discrete multiple stochastic integral. Then, Theorem \ref{thm:MultiGeneral} implies the following multivariate analogue of Theorem \ref{BerryEsseenMultipleIntegrals}, whose proof is postponed to the end of this section.

\begin{corollary}\label{cor:MutiIntegrals}
Fix an integer $d\geq 2$ and $q_1,\ldots,q_d\in\N$. Further, let for each $i\in\{1,\ldots,d\}$, $f^{(i)}\in\ell_0^2(\N)^{\circ q_i}$, define the random vector ${\bf F}:=(J_{q_1}(f^{(1)}),\ldots,J_{q_d}(f^{(d)}))$ and denote by ${\bf N}$ a centred Gaussian random vector with covariance matrix $\Sigma=(\sigma_{ij})_{i,j=1}^d$, such that $\sigma_{ij}=0$ if $q_i\neq q_j$. Then,
\begin{align}
d_4({\bf F},{\bf N}) &\leq C_1\max_{i,j=1,\ldots,d}\Big\{\big|\sigma_{ij}-\E[J_{q_i}(f^{(i)})J_{q_j}(f^{(j)})]\big|\,,\notag\\
&\qquad\qquad\max_{r=1,\ldots,\min\{q_i,q_j\}}\big\{\lnormb{2}{q_i+q_j-2r}{f^{(i)}\star_r^r f^{(j)}}\big\}\1_{\{q_i\neq q_j\}}\, ,\notag\\
&\qquad\quad\qquad \max_{r=1,\ldots,q_i-1}\big\{\lnormb{2}{2(q_i-r)}{f^{(i)}\star_r^r f^{(j)}}\big\}\1_{\{q_i= q_j\}}\, ,\notag\\
&\qquad\qquad\qquad \max_{r=1,\ldots,q_i-1}\big\{\lnormb{2}{2(q_i-r)}{f^{(i)}\star_r^r f^{(i)}}^2\big\}\Big\}\label{eq:Cor1}\\
&\leq C_2\!\max_{i,j=1,\ldots,d}\Big\{\big|\sigma_{ij}-\E[J_{q_i}(f^{(i)})J_{q_j}(f^{(j)})]\big|\,,\notag\\
&\qquad\qquad\max_{r=1,\ldots,q_i-1}\big\{\lnormb{2}{2(q_i-r)}{f^{(i)}\star_r^r f^{(i)}},\lnormb{2}{2(q_i-r)}{f^{(i)}\star_r^r f^{(i)}}^2,\notag\\
&\qquad\qquad\qquad\qquad\qquad\lnormb{2}{q_j}{f^{(j)}}\lnormb{2}{2(q_i-r)}{f^{(i)}\star_r^rf^{(i)}}^{1/2}\big\}\Big\}\label{eq:Cor2}
\end{align}
with universal constants $0<C_1,C_2<\infty$ only depending on $d$ and on $q_1,\ldots,q_d$.
\end{corollary}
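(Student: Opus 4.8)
The plan is to derive Corollary~\ref{cor:MutiIntegrals} from Theorem~\ref{thm:MultiGeneral} applied to the vector ${\bf F}=(J_{q_1}(f^{(1)}),\dots,J_{q_d}(f^{(d)}))$. Since $q_i\geq 1$ we have $\E[J_{q_i}(f^{(i)})]=0$, and the integrability hypothesis $\E[\|DF_i\|_{\ell^4(\N)}^4]<\infty$ required in Theorem~\ref{thm:MultiGeneral} is supplied by the bound \eqref{eq:4NormHoch4} of Lemma~\ref{A_1 & A_2} (for $q_i\geq 2$; for $q_i=1$ the gradient $D_kF_i=f^{(i)}(k)$ is deterministic and the quantity in question is just $\|f^{(i)}\|_{\ell^4(\N)}^4$). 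As in the proof of Theorem~\ref{BerryEsseenMultipleIntegrals}, I would first assume that each $f^{(i)}$ has finite support, so that all series manipulations below may be carried out term by term, and then pass to the general case along the truncations $f^{(i)}\1_{\{1,\dots,n\}^{q_i}}$ using the martingale convergence theorem, \cite[Lemma 2.6]{Pri} and the continuity of $D$ on a fixed chaos.

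The two summands in \eqref{eq:d4BoundGeneral} are estimated separately. For the variance-type term I would use that $-D_kL^{-1}F_i=\tfrac1{q_i}D_kF_i=J_{q_i-1}(f^{(i)}(\,\cdot\,,k))$, whence $\langle DF_j,-DL^{-1}F_i\rangle_{\ell^2(\N)}=q_j\sum_k J_{q_i-1}(f^{(i)}(\,\cdot\,,k))\,J_{q_j-1}(f^{(j)}(\,\cdot\,,k))$. Expanding the product of integrals by the multiplication formula \eqref{Multiplication formula} and summing over $k$ — which sends $\sum_k f^{(i)}(\,\cdot\,,k)\star_r^r f^{(j)}(\,\cdot\,,k)$ to $f^{(i)}\star_{r+1}^{r+1}f^{(j)}$ and therefore produces the index shift $r\mapsto r-1$ — yields
\begin{align*}
&\langle DF_j,-DL^{-1}F_i\rangle_{\ell^2(\N)}\\
&\quad=q_j\sum_{r=1}^{q_i\wedge q_j}(r-1)!\binom{q_i-1}{r-1}\binom{q_j-1}{r-1}\,J_{q_i+q_j-2r}\bigl((\widetilde{f^{(i)}\star_r^r f^{(j)}})\1_{\Delta_{q_i+q_j-2r}}\bigr)\,.
\end{align*}
If $q_i=q_j$, the summand with $r=q_i$ is the constant $q_i!\langle f^{(i)},f^{(j)}\rangle_{\ell^2(\N)^{\otimes q_i}}$, which equals $\E[J_{q_i}(f^{(i)})J_{q_j}(f^{(j)})]$ by \eqref{Isometric relation}; if $q_i\neq q_j$, no constant summand occurs and $\E[J_{q_i}(f^{(i)})J_{q_j}(f^{(j)})]=0$. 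Subtracting this covariance and invoking orthogonality of discrete multiple integrals of distinct orders together with the isometry \eqref{Isometric relation}, one obtains that $\E[(\sigma_{ij}-\langle DF_j,-DL^{-1}F_i\rangle_{\ell^2(\N)})^2]$ equals $(\sigma_{ij}-\E[J_{q_i}(f^{(i)})J_{q_j}(f^{(j)})])^2$ plus a finite sum of terms $c_{ij,r}\,\|(\widetilde{f^{(i)}\star_r^r f^{(j)}})\1_{\Delta_{q_i+q_j-2r}}\|^2$ with explicit constants $c_{ij,r}$, the index $r$ ranging over $1,\dots,q_i\wedge q_j$ when $q_i\neq q_j$ and over $1,\dots,q_i-1$ when $q_i=q_j$. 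Bounding $\|(\widetilde{f^{(i)}\star_r^r f^{(j)}})\1_{\Delta_{q_i+q_j-2r}}\|\leq\|f^{(i)}\star_r^r f^{(j)}\|$ and dominating the finite sum over $i,j$ and $r$ by its number of terms times the largest one produces exactly the contributions $|\sigma_{ij}-\E[J_{q_i}(f^{(i)})J_{q_j}(f^{(j)})]|$ and $\|f^{(i)}\star_r^r f^{(j)}\|$ appearing in \eqref{eq:Cor1}.

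For the cubic term of \eqref{eq:d4BoundGeneral} I would expand $(\sum_j|DF_j|)^3=\sum_{j_1,j_2,j_3}|DF_{j_1}||DF_{j_2}||DF_{j_3}|$ pointwise on $\N$ and use $|DL^{-1}F_i|=\tfrac1{q_i}|DF_i|$, which reduces everything to bounding $\E[\sum_k\prod_{m=1}^4|D_kF_{a_m}|]$ for indices $a_1,\dots,a_4\in\{1,\dots,d\}$. H\"older's inequality, applied first on $\ell^4(\N)$ and then on $L^4(\Omega)$, bounds this by $\prod_{m=1}^4(\E[\|DF_{a_m}\|_{\ell^4(\N)}^4])^{1/4}$, and \eqref{eq:4NormHoch4} bounds each factor by a constant times $(\max_r\|f^{(a_m)}\star_r^r f^{(a_m)}\|)^{1/2}$, so that the product is at most a constant times $\max_{i,r}\|f^{(i)}\star_r^r f^{(i)}\|^2$; this is the remaining term in \eqref{eq:Cor1}. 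Collecting everything and absorbing the combinatorial factors into $C_1$ gives the first inequality in Corollary~\ref{cor:MutiIntegrals}. The second inequality \eqref{eq:Cor2} then follows by dominating mixed contraction norms by self-contractions: the Cauchy--Schwarz type identity $\|f^{(i)}\star_r^r f^{(j)}\|^2=\langle f^{(i)}\star_{q_i-r}^{q_i-r}f^{(i)},f^{(j)}\star_{q_j-r}^{q_j-r}f^{(j)}\rangle$ gives, for $r\leq(q_i\wedge q_j)-1$, the estimate $\|f^{(i)}\star_r^r f^{(j)}\|^2\leq\|f^{(i)}\star_{q_i-r}^{q_i-r}f^{(i)}\|\,\|f^{(j)}\star_{q_j-r}^{q_j-r}f^{(j)}\|$, while the boundary value $r=q_i\wedge q_j$ (only relevant when $q_i\neq q_j$, say $q_j<q_i$) uses $f^{(j)}\star_0^0 f^{(j)}=f^{(j)}\otimes f^{(j)}$ to produce $\|f^{(i)}\star_r^r f^{(j)}\|\leq\|f^{(j)}\|_{\ell^2(\N)^{\otimes q_j}}\,\|f^{(i)}\star_{q_i-q_j}^{q_i-q_j}f^{(i)}\|^{1/2}$, which is exactly the last entry in the maximum in \eqref{eq:Cor2}.

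The main obstacle is the variance computation: one has to run the multiplication-formula expansion of $\langle DF_j,-DL^{-1}F_i\rangle_{\ell^2(\N)}$ carefully, keep track of the shift $r\mapsto r-1$ coming from the summation over $k$, single out the unique constant term — present exactly when $q_i=q_j$ — and recognize it as the covariance $\E[J_{q_i}(f^{(i)})J_{q_j}(f^{(j)})]$, before orthogonality and the isometry relation can be applied. Everything else — the H\"older estimates for the cubic term, the reduction to finitely supported kernels, the passage from $\|(\widetilde{f^{(i)}\star_r^r f^{(j)}})\1_{\Delta_{q_i+q_j-2r}}\|$ to $\|f^{(i)}\star_r^r f^{(j)}\|$, and the repackaging of the finitely many error terms into the maxima in \eqref{eq:Cor1} and \eqref{eq:Cor2} — is routine bookkeeping.
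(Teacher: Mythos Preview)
Your proposal is correct and follows essentially the same route as the paper's proof: apply Theorem~\ref{thm:MultiGeneral}, expand $\langle DF_j,-DL^{-1}F_i\rangle_{\ell^2(\N)}$ via the multiplication formula and isometry (splitting into the cases $q_i=q_j$ and $q_i\neq q_j$), control the cubic term through H\"older and \eqref{eq:4NormHoch4}, and then reduce the mixed contractions to self-contractions via the identity $\|f^{(i)}\star_r^r f^{(j)}\|^2=\langle f^{(i)}\star_{q_i-r}^{q_i-r}f^{(i)},f^{(j)}\star_{q_j-r}^{q_j-r}f^{(j)}\rangle$. The only cosmetic difference is that the paper handles the cubic term by first bounding $(\sum_j|D_kF_j|)^4\leq d^3\sum_j|D_kF_j|^4$ and then summing, whereas you expand the product and apply H\"older termwise; both lead to the same estimate.
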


The second estimate in Corollary \ref{cor:MutiIntegrals} especialy implies that a random vector $$(J_{q_1}(f_n^{(1)}),\ldots,J_{q_d}(f_n^{(d)}))\,,\qquad f_n^{(i)}\in\ell_0^2(\N)^{\circ q_i}\,,\quad i\in\{1,\ldots,d\}\,,$$ whose entries are discrete multiple stochastic integrals, converges in distribution to a centred Gaussian random vector ${\bf N}$ with covariance matrix $\Sigma=(\sigma_{ij})_{i,j=1}^d$ if the following two conditions are satisfied:
\begin{itemize}
\item[(i)] $\lim\limits_{n\to\infty}\E[J_{q_i}(f_n^{(i)})J_{q_j}(f_n^{(j)})]=\lim\limits_{n\to\infty}{\rm Cov}(J_{q_i}(f_n^{(i)})J_{q_j}(f_n^{(j)}))=\sigma_{ij}$ for all $i,j\in\{1,\ldots,d\}$,
\item[(ii)]$\lim\limits_{n\to\infty}\|f_n^{(i)}\star_r^r f_n^{(i)}\|_{\ell^2(\N)^{\otimes 2(q_i-r)}}=0$ for all $i\in\{1,\ldots,d\}$ and all $r\in\{1,\ldots,q_i-1\}$.
\end{itemize}
In view of Remark \ref{rem:NormalGeneral}, (i) and (ii) imply that the sequence $(J_{q_i}(f_n^{(i)}))_{n\geq 1}$ satisfies a univariate central limit theorem in that $J_{q_i}(f_n^{(i)})$ converges in distribution to a one-dimensional centred Gaussian random variable with variance $\sigma_{ii}$. This is the discrete analogue to a similar phenomenon observed in \cite{NouPecRev,NuaOrt,PecTud,PecZhe} for Gaussian and Poisson multiple integrals, respectively.

\begin{remark}
A bound on the distance between the law of a vector consisting of discrete multiple stochastic integrals and a multivariate Gaussian random variable similar to that in Corollary \ref{cor:MutiIntegrals} above can also be deduced from universality results for so-called homogeneous sums as in \cite{NouPecReiInvariance}, see in particular Theorem 5.1 ibidem. However, this approach does not deliver the general estimate \eqref{eq:d4BoundGeneral}, which is of independent interest. For this reason, we prefer to give direct proofs using the multivariate Malliavin-Stein technique on the Rademacher chaos and not to employ universality results from \cite{NouPecReiInvariance}, which on their part are based, for example, on highly non-trivial arguments centred around the notion of influence, see \cite{Influence}. We use an interpolation technique, which has already been applied in the Gaussian and Poisson context (cf. \cites{NouPecReiInvariance,PecZhe}) together with a new multivariate approximate chain rule for the gradient operator (see Lemma \ref{lem:Multi1}), which generalizes the one-dimensional chain rule in \cite{ReiPec}*{Proposition 2.14}.
\end{remark}

\begin{remark}\label{MultivariateKolmogorov}
In contrast to the one-dimensional case, in this section a probability metric based on smooth test functions is considered, namely the $d_4$-distance. The multivariate Kolmogorov distance can then be estimated from above in terms of the $d_4$-distance using a smoothing argument (see Lemma 12.1 in \cite{CheGolSha} and the references cited there), which usually leads to suboptimal rates of convergence. Dealing with the multivariate Kolmogorov distance without smoothing techniques would require precise information on the solution of the multivariate Stein equation associated with a multivariate indicator function. To the best of our knowledge, this is still an open problem in the theory of Stein's method. In this context we emphasize that the multivariate normal approximation of a vector of multiple stochastic integrals on the Gaussian (cf.\ \cite{NouPecRev}) or the Poisson space (cf.\ \cite{PecZhe}) in terms of the multivariate Kolmogorov distance has not been considered for the same reason.
\end{remark}

To give a proof of Theorem \ref{thm:MultiGeneral} we need the following two lemmas. Recall that by $X=(X_k)_{k\in\N}$ we denote a Rademacher sequence and that a Rademacher functional $F=F(X)$ is a (possibly) non-linear transformation of $X$.

\begin{lemma}\label{lem:Multi1}
Fix an integer $d\geq 2$, let $F_1,\ldots,F_d\in{\rm dom}(D)$ be Rademacher functionals satisfying $\E[F_i]=0$ for all $i\in\{1,\ldots,d\}$, and define the random vector ${\bf F}:=(F_1,\ldots,F_d)$. Let $f:\R^d\rightarrow\R$ be thrice differentiable with continuous and bounded partial derivatives up to order three. Then, for every $k\in\N$,
\begin{align*}
D_kf(\textbf{F}) &=\sum_{i=1}^d\frac{\partial}{\partial x_i}f(\textbf{F})(D_kF_i)\\ 
&\qquad-\frac{1}{2}\sum_{i,j=1}^d\Big(\frac{\partial^2}{\partial x_i\partial x_j}f({\bf F}_k^+)-\frac{\partial^2}{\partial x_i\partial x_j}f({\bf F}_k^-)\Big)(D_kF_i)(D_kF_j)X_k+R\,,
\end{align*}
where the remainder term $R$ satisfies
\[R=\sum_{i,j,l=1}^dR_{i,j,l}\quad\text{ with }\quad |R_{i,j,l}|\leq\frac{10}{3}\sup_{x\in\R^d}\left|\frac{\partial^3}{\partial x_i\partial x_j\partial x_l}f(x)\right|\,|(D_kF_i)(D_kF_j)(D_kF_l)|\,.\]
\end{lemma}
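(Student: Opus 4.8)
The plan is to carry out a careful second‑order Taylor expansion of $f$, organised by conditioning on the value of $X_k$. Start from the pathwise description of the gradient, $D_k f({\bf F})=\tfrac12\big(f({\bf F}_k^+)-f({\bf F}_k^-)\big)$ with ${\bf F}_k^{\pm}=\big((F_1)_k^{\pm},\ldots,(F_d)_k^{\pm}\big)$, and apply the identity \eqref{difference} coordinatewise to get $(F_i)_k^{\pm}-F_i=\pm 2(D_kF_i)\1_{\{X_k=\mp1\}}$. Writing $D_k{\bf F}:=(D_kF_1,\ldots,D_kF_d)$, this reads ${\bf F}_k^+={\bf F}+(1-X_k)D_k{\bf F}$ and ${\bf F}_k^-={\bf F}-(1+X_k)D_k{\bf F}$; in particular one of the two configurations ${\bf F}_k^{\pm}$ always coincides with ${\bf F}$ (which one depends on the sign of $X_k$), so that $D_k f({\bf F})$ is $\tfrac12$ times the increment of $f$ along the segment from ${\bf F}$ to ${\bf F}\pm2D_k{\bf F}$.

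Next I would Taylor–expand $f$ about ${\bf F}$ up to second order, keeping the third‑order term in Lagrange (equivalently integral) remainder form — this is the only place the hypothesis that $f\in C^3$ with bounded third partials enters. Performing this for each of the two cases $X_k=+1$ and $X_k=-1$ and collecting terms: the zeroth‑order contributions cancel; the first‑order contributions combine — the coefficients $1-X_k$ and $1+X_k$ of $D_kF_i$ coming from ${\bf F}_k^+$ and ${\bf F}_k^-$ add up to $2$, which the prefactor $\tfrac12$ turns into $1$ — into $\sum_{i=1}^d\frac{\partial}{\partial x_i}f({\bf F})(D_kF_i)$; the second‑order contributions inherit the factor $X_k$ because $(1-X_k)^2-(1+X_k)^2=-4X_k$, and after the prefactor they assemble into the Hessian contribution displayed in the statement (equivalently, symmetrising the two Taylor expansions based at ${\bf F}_k^+$ and at ${\bf F}_k^-$ respectively produces the second partials evaluated at the neighbouring configurations ${\bf F}_k^{\pm}$ directly). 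What is left over is exactly the third‑order term; since the displacement in each case is $\pm2D_k{\bf F}$ and the outer prefactor is $\tfrac12$, it is read off as $R=\sum_{i,j,l=1}^dR_{i,j,l}$ with each $R_{i,j,l}$ a fixed numerical multiple of $\frac{\partial^3}{\partial x_i\partial x_j\partial x_l}f(\xi)(D_kF_i)(D_kF_j)(D_kF_l)$ for some $\xi$ on the relevant segment; bounding $\big|\frac{\partial^3}{\partial x_i\partial x_j\partial x_l}f(\xi)\big|$ by its supremum and tracking all numerical factors (the $2$'s from \eqref{difference}, the $\tfrac12$ from the definition of $D_k$, the $\tfrac16$ and $\int_0^1(1-t)^2\,dt=\tfrac13$ from Taylor's formula, and the symmetrisation over the orderings of $(i,j,l)$) yields the asserted bound with constant $\tfrac{10}{3}$.

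There is no genuine conceptual difficulty — the whole lemma is a Taylor expansion of a smooth function on $\R^d$ — so the main obstacle is purely the bookkeeping. One must make sure that the indicator functions $\1_{\{X_k=\pm1\}}$ produced by \eqref{difference} recombine so that the linear coefficient ends up being exactly $D_kF_i$ and the quadratic coefficient carries the single factor $X_k$ (and not $X_k^2=1$ or some mixed expression), and one must leave the cubic remainder in the componentwise product form $|(D_kF_i)(D_kF_j)(D_kF_l)|$ rather than collapsing it to $\|D_k{\bf F}\|^3$, since it is precisely this form — summed against $\sum_i|D_kL^{-1}F_i|$ — that produces the second term of the bound \eqref{eq:d4BoundGeneral} in the proof of Theorem \ref{thm:MultiGeneral}. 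Treating the cases $X_k=+1$ and $X_k=-1$ in isolation and merging them only at the very end makes both of these transparent. (Note that the centering assumption $\E[F_i]=0$ plays no role in establishing the identity itself; it is recorded in the hypotheses only because the lemma is applied together with the representation $F_i=\delta(-DL^{-1}F_i)$.)
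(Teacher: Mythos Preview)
Your plan is correct and matches the paper's approach: write $D_kf({\bf F})=\tfrac12\big(f({\bf F}_k^+)-f({\bf F})\big)-\tfrac12\big(f({\bf F}_k^-)-f({\bf F})\big)$, Taylor–expand each piece about ${\bf F}$ to second order with third‑order remainder, and use \eqref{difference} to combine the linear parts into $\sum_i\partial_i f({\bf F})\,D_kF_i$ and to see that the quadratic part carries the factor $X_k$. The one step you gloss over is how the Hessian ends up evaluated at ${\bf F}_k^{\pm}$ rather than at ${\bf F}$: the paper achieves this by splitting the second‑order term into two equal halves and applying a further first‑order Taylor expansion of $\partial_i\partial_j f({\bf F})$ about ${\bf F}_k^+$ in one half and about ${\bf F}_k^-$ in the other; the resulting first‑order corrections are absorbed into an additional remainder $R_3$ (this is the ``symmetrisation'' you allude to, and it is what produces the constant $\tfrac{10}{3}=\tfrac{2}{3}+\tfrac{2}{3}+2$ from three separate remainders).
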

\begin{proof}
For $k\in\N$ we use the definition of the discrete gradient together with a Taylor series expansion of $f$ to see that
\begin{align*}
&D_kf(\textbf{F})\\
&=\frac{1}{2}\left(f(\textbf{F}_k^+)-f(\textbf{F}_k^-)\right)\\
&=\frac{1}{2}\left(f(\textbf{F}_k^+)-f(\bf{F})\right)-\frac{1}{2}\left(f(\textbf{F}_k^-)-f(\textbf{F})\right)\\
&=\frac{1}{2}\sum_{i=1}^d\frac{\partial}{\partial x_i}f(\textbf{F})((F_i)_k^+-F_i)+\frac{1}{4}\sum_{i,j=1}^d\frac{\partial^2}{\partial x_i\partial x_j}f(\textbf{F})((F_i)_k^+-F_i)((F_j)_k^+-F_j)\\
&\qquad-\Big(\frac{1}{2}\sum_{i=1}^d\frac{\partial}{\partial x_i}f(\textbf{F})((F_i)_k^--F_i)+\frac{1}{4}\sum_{i,j=1}^d\frac{\partial^2}{\partial x_i\partial x_j}f(\textbf{F})((F_i)_k^--F_i)((F_j)_k^--F_j)\Big)\\
&\qquad +R_1+R_2
\end{align*}
\begin{align*}
&=\sum_{i=1}^d\frac{\partial}{\partial x_i}f(\textbf{F}) D_kF_i\\
&\qquad +\frac{1}{8}\sum_{i,j=1}^d\frac{\partial^2}{\partial x_i\partial x_j}f(\textbf{F})\left[((F_i)_k^+-F_i)((F_j)_k^+-F_j)-((F_i)_k^--F_i)((F_j)_k^--F_j)\right]\\
&\qquad +\frac{1}{8}\sum_{i,j=1}^d\frac{\partial^2}{\partial x_i\partial x_j}f(\textbf{F})\left[((F_i)_k^+-F_i)((F_j)_k^+-F_j)-((F_i)_k^--F_i)((F_j)_k^--F_j)\right]\\
&\qquad + R_1+R_2\,.
\end{align*}
where 
\[R_1:=\sum_{i,j,l=1}^d R_{i,j,l}^{(1)}\]
with 
\begin{align*}
|R_{i,j,l}^{(1)}|&\leq\frac{1}{12}\sup_{x\in\R^d}\left|\frac{\partial^3}{\partial x_i\partial x_j\partial x_l}f(x)\right|\,|((F_i)_k^+-F_i)((F_j)_k^+-F_j)((F_l)_k^+-F_l)|\\
&\leq\frac{2}{3}\sup_{x\in\R^d}\left|\frac{\partial^3}{\partial x_i\partial x_j\partial x_l}f(x)\right|\,\left|(D_kF_i)(D_kF_j)(D_kF_l)\right|\,,
\end{align*}
and
\[R_2:=\sum_{i,j,l=1}^d R_{i,j,l}^{(2)}\]
satisfies 
\begin{align*}
|R_{i,j,l}^{(2)}|&\leq\frac{1}{12}\sup_{x\in\R^d}\left|\frac{\partial^3}{\partial x_i\partial x_j\partial x_l}f(x)\right|\,|((F_i)_k^--F_i)((F_j)_k^--F_j)((F_l)_k^--F_l)|\\
&\leq\frac{2}{3}\sup_{x\in\R^d}\left|\frac{\partial^3}{\partial x_i\partial x_j\partial x_l}f(x)\right|\,\left|(D_kF_i)(D_kF_j)(D_kF_l)\right|\,.
\end{align*}
To the two sums on the right hand side of the last equation we now apply a Taylor expansion of $\frac{\partial^2}{\partial x_i\partial x_j}f$ up to order $1$ about ${\bf F}_k^+$ and ${\bf F}_k^-$, respectively. We thus obtain that
\begin{align*}
D_kf(\textbf{F})
&=\sum_{i=1}^d\frac{\partial}{\partial x_i}f(\textbf{F}) D_kF_i+\frac{1}{8}\sum_{i,j=1}^d\bigg\{\left[\frac{\partial^2}{\partial x_i\partial x_j}f(\textbf{F}_k^+)+\frac{\partial^2}{\partial x_i\partial x_j}f(\textbf{F}_k^-)\right]\\
&\qquad\times\left[((F_i)_k^+-F_i)((F_j)_k^+-F_j)-((F_i)_k^--F_i)((F_j)_k^--F_j)\right]\bigg\} + R_1+R_2+R_3\,,
\end{align*}
where 
\[R_3:=\sum_{i,j,l=1}^d R_{i,j,l}^{(3)}\]
is such that
\begin{align*}
&\quad|R_{i,j,l}^{(3)}|\\
&\leq\frac{1}{8}\sup_{x\in\R^d}\left|\frac{\partial^3}{\partial x_i\partial x_j\partial x_l}f(x)\right|\\
&\quad\times\left(|((F_i)_k^+-F_i)((F_j)_k^+-F_j)((F_l)_k^+-F_l)\right|+\left|((F_i)_k^--F_i)((F_j)_k^--F_j)((F_l)_k^--F_l)|\right)\\
&\leq 2\sup_{x\in\R^d}\left|\frac{\partial^3}{\partial x_i\partial x_j\partial x_l}f(x)\right|\,\left|(D_kF_i)(D_kF_j)(D_kF_l)\right|\,.
\end{align*}
This yields the result.
\end{proof}

\begin{lemma}\label{lem:Multi2}
Fix an integer $d\geq 2$, let $F_0,F_1,\ldots,F_d\in{\rm dom}(D)$ be Rademacher functionals satisfying $\E[F_i]=0$ and $\E[\|DF_i\|^4_{\ell^4(\N)}]<\infty$ for $1\leq i\leq d$ and define the random vector ${\bf F}:=(F_1,\ldots,F_d)$. Let $f:\R^d\rightarrow\R$ be thrice differentiable with continuous and bounded partial derivatives up to order three. Then,
\begin{align*}
\E[f({\bf F})\,F_0] = \E\Big[\sum_{j=1}^d{\partial \over \partial x_j}f({\bf F})\,\langle DF_j,-DL^{-1}F_0\rangle_{\ell^2(\N)}\Big] + \E[\langle R,-DL^{-1}F_0\rangle_{\ell^2(\N)}]
\end{align*}
with $R$ satisfying the estimate
\begin{align*}
\big|\E[\langle R,-DL^{-1}F_0\rangle_{\ell^2(\N)}]\big|\leq{10\over 3}M_3(f)\E\big[\big\langle \big(\sum_{j=1}^d|DF_j|\big)^3,|DF^{-1}F_0|\big\rangle_{\ell^2(\N)}\big]\,.
\end{align*}
\end{lemma}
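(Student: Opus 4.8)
The plan is a standard Malliavin--Stein integration-by-parts, with one genuinely discrete twist. First I would use $\E[F_0]=0$ to write $F_0=LL^{-1}F_0=-\delta(DL^{-1}F_0)$ via \eqref{eq:L=-dD} and the definition of $L^{-1}$, so that $\E[f({\bf F})\,F_0]=-\E[f({\bf F})\,\delta(DL^{-1}F_0)]$, and then apply the integration-by-parts formula \eqref{intbyparts}. This requires two membership checks. On the one hand $DL^{-1}F_0\in\text{dom}(\delta)$, since $\delta(DL^{-1}F_0)=-F_0\in L^2(\Omega)$; comparing chaotic expansions moreover gives $\E[\|DL^{-1}F_0\|_{\ell^2(\N)}^2]=\sum_{n\ge1}\tfrac1n\,n!\,\|f_n^{(0)}\|_{\ell^2(\N)^{\otimes n}}^2\le\E[\|DF_0\|_{\ell^2(\N)}^2]<\infty$, so $L^{-1}F_0\in\text{dom}(D)$ as well. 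On the other hand, the mean value theorem yields $|D_kf({\bf F})|=\tfrac12|f({\bf F}_k^+)-f({\bf F}_k^-)|\le M_1(f)\sum_{i=1}^d|D_kF_i|$, which, together with $F_i\in\text{dom}(D)\subset L^2(\Omega)$ and Lemma \ref{DandDprime}, gives $f({\bf F})\in\text{dom}(D)$. Hence \eqref{intbyparts} produces $\E[f({\bf F})F_0]=\E[\langle Df({\bf F}),-DL^{-1}F_0\rangle_{\ell^2(\N)}]$.

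Next I would substitute the multivariate approximate chain rule of Lemma \ref{lem:Multi1}, writing $D_kf({\bf F})=\sum_{i=1}^d\frac{\partial}{\partial x_i}f({\bf F})(D_kF_i)+S_k+R_k$ with
\[S_k:=-\frac12\sum_{i,j=1}^d\Big(\frac{\partial^2}{\partial x_i\partial x_j}f({\bf F}_k^+)-\frac{\partial^2}{\partial x_i\partial x_j}f({\bf F}_k^-)\Big)(D_kF_i)(D_kF_j)\,X_k\]
and $R_k=\sum_{i,j,l=1}^dR_{i,j,l}(k)$ the third-order remainder. Pairing the first-order term with $-DL^{-1}F_0$ reproduces exactly $\E\big[\sum_{j=1}^d\frac{\partial}{\partial x_j}f({\bf F})\,\langle DF_j,-DL^{-1}F_0\rangle_{\ell^2(\N)}\big]$. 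The crucial point is that the second-order contribution vanishes in expectation: since $F_i$ and $L^{-1}F_0$ lie in $\text{dom}(D)$, each of $D_kF_i$, $D_kL^{-1}F_0$ and the second partials of $f$ evaluated at ${\bf F}_k^{\pm}$ is a function of $(X_l)_{l\ne k}$ only and is therefore independent of $X_k$, so $\E[S_k\,(-D_kL^{-1}F_0)]=\E[X_k]\cdot\E[\,\cdots\,]=0$ for every $k$; summing over $k$ then gives $\E[\langle S,-DL^{-1}F_0\rangle_{\ell^2(\N)}]=0$, the interchange of sum and expectation being justified by the $\ell^4$-integrability hypotheses and Hölder's inequality (using also $\|\,\cdot\,\|_{\ell^4(\N)}\le\|\,\cdot\,\|_{\ell^2(\N)}$ and the $\ell^2$-bound on $DL^{-1}F_0$ established above). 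This cancellation is the discrete analogue of the fact that the Gaussian chain rule carries no second-order term, and it is what leaves a purely third-order error.

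Finally I would identify $R:=(R_k)_{k\in\N}$ with the remainder in the statement and bound it. Summing the pointwise estimate $|R_{i,j,l}(k)|\le\tfrac{10}{3}M_3(f)\,|(D_kF_i)(D_kF_j)(D_kF_l)|$ from Lemma \ref{lem:Multi1} over $i,j,l$ and using $\sum_{i,j,l=1}^d|a_ia_ja_l|=\big(\sum_{i=1}^d|a_i|\big)^3$ gives $|R_k|\le\tfrac{10}{3}M_3(f)\big(\sum_{i=1}^d|D_kF_i|\big)^3$, whence
\[\big|\E[\langle R,-DL^{-1}F_0\rangle_{\ell^2(\N)}]\big|\le\E\big[\langle|R|,|DL^{-1}F_0|\rangle_{\ell^2(\N)}\big]\le\tfrac{10}{3}M_3(f)\,\E\Big[\Big\langle\Big(\sum_{i=1}^d|DF_i|\Big)^3,\,|DL^{-1}F_0|\Big\rangle_{\ell^2(\N)}\Big]\,,\]
which is the asserted bound. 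I expect the main obstacle to be bookkeeping rather than conceptual: carefully verifying the two domain memberships and, above all, rigorously justifying the Fubini interchange in the vanishing of the $S$-term under the stated integrability; the short conceptual heart of the argument is simply the observation that the stray factor $X_k$ in the second-order term, combined with the $X_k$-independence of all the other factors, forces that term to drop out.
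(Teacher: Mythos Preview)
Your proposal is correct and follows essentially the same route as the paper's proof: write $F_0=\delta(-DL^{-1}F_0)$, apply the integration-by-parts formula \eqref{intbyparts}, expand $Df({\bf F})$ via Lemma~\ref{lem:Multi1}, and observe that the second-order term vanishes because all factors except $X_k$ are functions of $(X_l)_{l\neq k}$. Your treatment is in fact more self-contained than the paper's, which cites \cite[Lemma~2.13]{ReiPec} for the vanishing of the second-order term and for the Fubini justification, whereas you spell out the independence argument directly.
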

\begin{proof}
We use relation \eqref{eq:L=-dD} and the integration-by-parts formula to see that
\begin{align*}
\E[f({\bf F})\,F_0] = \E[\delta(-DL^{-1}F_0)\,f({\bf F})] = \E[\langle Df({\bf F}),-DL^{-1}F_0\rangle_{\ell^2(\N)}]\,,
\end{align*}
since $f({\bf F})\in{\rm dom}(D)$ due to the assumed boundedness of the partial derivatives of the test function $f$.
Now, we apply Lemma \ref{lem:Multi1} to re-write $Df({\bf F})$. This leads to
\begin{align*}
\E[f({\bf F})\,F_0] = \E\Big[\sum_{j=1}^d{\partial\over \partial x_j}f({\bf F})\langle DF_j,-DL^{-1}F_0\rangle_{\ell^2(\N)}\Big]+\E[\langle R,-DL^{-1}F_0\rangle_{\ell^2(\N)}]
\end{align*}
with the term $R$ satisfying
$$\big|\E[\langle R,-DL^{-1}F_0\rangle_{\ell^2(\N)}]\big|\leq{10\over 3}M_3(f)\E\big[\big\langle\big(\sum_{j=1}^d|DF_j|\big)^3,|DL^{-1}F_0|\big\rangle_{\ell^2(\N)}\big]\,.$$ Note that the term involving second-order partial derivatives of $f$ vanishes because of \cite[Lemma 2.13 (1)]{ReiPec} and an application of Fubini's theorem. More precisely, our assumption that $\E[\|DF_i\|^4_{\ell^4(\N)}]<\infty$ for all $1\leq i\leq d$ and \cite[Lemma 2.13 (3)]{ReiPec} is needed in order to justify an exchange of the order of integration by means of Fubini's theorem. This proves the claim.
\end{proof}

\begin{proof}[Proof of Theorem \ref{thm:MultiGeneral}]
We use the so-called smart-path technique, which in the context of the Malliavin-Stein method has previously found application in \cite{NouPecReiInvariance} and \cite{PecZhe}.

Let $g:\R^d\to\R$ be continuously differentiable up to order $4$ and define $$\Psi(t):=\E[g(\sqrt{t}\,{\bf N}+\sqrt{1-t}\,{\bf F})]\,.$$ Here ${\bf F}=(F_1,\ldots,F_d)$ is a vector of Rademacher functionals and ${\bf N}=(N_1,\ldots,N_d)$ is a Gaussian random vector with covariance matrix $\Sigma=(\sigma_{ij})_{i,j=1}^d$. Then,
$$\big|\E[g({\bf F})]-\E[g({\bf N})]\big|=\big|\Psi(0)-\Psi(1)\big|\leq\sup_{t\in(0,1)}\big|\Psi'(t)\big|$$
with the derivative of $\Psi$ given by
\begin{align}\label{eq:PsiPrime}
\Psi'(t)={1\over 2\sqrt{t}}A-{1\over 2\sqrt{1-t}}B\,,
\end{align}
where
\begin{align*}
A &:=\sum_{i=1}^d\E\Big[{\partial\over \partial x_i}g(\sqrt{t}\,{\bf N}+\sqrt{1-t}\,{\bf F})\,N_i\Big]\quad\text{and}\quad
B := \sum_{i=1}^d\E\Big[{\partial\over \partial x_i}g(\sqrt{t}\,{\bf N}+\sqrt{1-t}\,{\bf F})\,F_i\Big]\,.
\end{align*}
As in the proof of Theorem 4.2 in \cite{PecZhe} an integration-by-parts argument shows that
\begin{equation}\label{eq:AUmgeformt}
A = \sqrt{t}\,\sum_{i,j=1}^d\sigma_{ij}\,\E\Big[ {\partial^2\over\partial x_i\partial x_j}g(\sqrt{t}\,{\bf N}+\sqrt{1-t}\,{\bf F})\Big]\,.
\end{equation}
To evaluate $B$ further, let us condition on ${\bf N}={\bf b}\in\R^d$, i.e., 
\begin{align*}
B &= \sum_{i=1}^d\E\Big[\E\Big[{\partial\over \partial x_i}g(\sqrt{t}\,{\bf b}+\sqrt{1-t}\,{\bf F})\,F_i\Big]_{{\bf b}={\bf N}}\Big]
\end{align*}
and put $g_i^{t,{\bf b}}({\bf F}):={\partial\over \partial x_i}g(\sqrt{t}\,{\bf b}+\sqrt{1-t}\,{\bf F}).$
Now, we apply Lemma \ref{lem:Multi2} to the conditional expectation, which leads to
\begin{align*}
\E\Big[g_i^{t,{\bf b}}({\bf F})\,F_i\Big]_{{\bf b}={\bf N}} &=\E\Big[\sum_{j=1}^d{\partial\over \partial x_j}g_i^{t,{\bf b}}({\bf F})\,\langle DF_j,-DL^{-1}F_i\rangle_{\ell^2(\N)}\Big]_{{\bf b}={\bf N}}\\
&\qquad\qquad\qquad\qquad\qquad+\E[\langle R,-DL^{-1}F_i\rangle_{\ell^2(\N)}]_{{\bf b}={\bf N}}\\
&=\sqrt{1-t}\,\E\Big[\sum_{j=1}^d{\partial\over \partial x_j\partial x_i}g(\sqrt{t}\,{\bf b}+\sqrt{1-t}\,{\bf F})\,\langle DF_j,-DL^{-1}F_i\rangle_{\ell^2(\N)}\Big]_{{\bf b}={\bf N}}\\
&\qquad\qquad\qquad\qquad\qquad+\E[\langle R,-DL^{-1}F_i\rangle_{\ell^2(\N)}]_{{\bf b}={\bf N}}\,,
\end{align*}
with $R$ satisfying
$$\big|\E[\langle R,-DL^{-1}F_i\rangle_{\ell^2(\N)}]_{{\bf b}={\bf N}}\big|\leq {10\over 3}M_3(g_i^{t,{\bf b}})\E\big[\big\langle\big(\sum_{j=1}^d|DF_j|\big)^3,|DL^{-1}F_i|\big\rangle_{\ell^2(\N)}\big]\,.$$
Thus,
\begin{align*}
B = & \sqrt{1-t}\,\sum_{i,j=1}^d\E\Big[{\partial^2\over \partial x_i\partial x_j}g(\sqrt{t}\,{\bf N}+\sqrt{1-t}\,{\bf F})\,\langle DF_j,-DL^{-1}F_i\rangle_{\ell^2(\N)}\Big]\\
&\qquad\qquad+\sum_{i=1}^d\E[\E[\langle R,-DL^{-1}F_i\rangle_{\ell^2(\N)}]_{{\bf b}={\bf N}}]\,.
\end{align*}
Now, note that $M_3(g_i^{t,{\bf b}})\leq (1-t)^{3/2}M_4(g)$ so that
\begin{align*}
&\Big|\sum_{i=1}^d\E[\E[\langle R,-DL^{-1}F_i\rangle_{\ell^2(\N)}]_{{\bf b}={\bf N}}]\Big|\\
&\qquad\qquad\leq {10\over 3}M_4(g)(1-t)^{3/2}\E\big[\big\langle\big(\sum_{j=1}^d|DF_j|\big)^3,\sum_{i=1}^d|DL^{-1}F_i|\big\rangle_{\ell^2(\N)}\big]\,,
\end{align*}
independently of $\bf b$. Thus, \eqref{eq:PsiPrime} implies the bound
\begin{align*}
\big|\E[g({\bf F})]-\E[g({\bf N})]\big| &\leq {1\over 2}M_2(g)\sum_{i,j=1}^d\E[|\sigma_{ij}-\langle DF_j,-DL^{-1}F_i\rangle_{\ell^2(\N)}|]\\
&\qquad +{1\over 2}{10\over 3}{1\over \sqrt{1-t}}M_4(g)(1-t)^{3/2}\E\big[\big\langle\big(\sum_{j=1}^d|DF_j|\big)^3,\sum_{i=1}^d|DL^{-1}F_i|\big\rangle_{\ell^2(\N)}\big]\\
&\leq {d\over 2}\Big(\sum_{i,j=1}^d\E\Big[(\sigma_{ij}-\langle DF_j,-DL^{-1}F_i\rangle_{\ell^2(\N)})^2\Big]\Big)^{1/2}\\
&\qquad +{5\over 3}\E\big[\big\langle\big(\sum_{j=1}^d|DF_j|)^3,\sum_{i=1}^d|DL^{-1}F_i|\big\rangle_{\ell^2(\N)}\big]\,,
\end{align*}
where in the last line we have applied the Cauchy-Schwarz inequality and took the supremum over all $t\in(0,1)$ and over all functions $g:\R^d\to\R$ with $M_2(g)\leq 1$ and $M_4(g)\leq 1$. This proves the theorem.
\end{proof}

\begin{proof}[Proof of Corollary \ref{cor:MutiIntegrals}]
As in the proof of Theorem \ref{BerryEsseenMultipleIntegrals} we assume without loss of generality that the multiple integrals $J_{q_i}(f^{(i)})$ have kernels $f^{(i)}$ with finite support for all $1\leq i\leq d$.
Fix $i,j\in\{1,\dots,d\}$ and to ease the notation let $\sigma:=\sigma_{ij}$, $p:=q_j$, $q:=q_i$, $f:=f^ {(j)}$, $g:=f^{(i)}$. Assume without loss of generality that $p\leq q$. We apply Theorem \ref{thm:MultiGeneral} to the random vector $\textbf{F}$
(note that the assumption $\E[\|DF_i\|^4_{\ell^4(\N)}]<\infty$ for $1\leq i\leq d$ is fulfilled by Lemma \ref{Hypercontractivity}, see also \cite{ReiPec}). Using the multiplication formula \eqref{Multiplication formula} we get for all $k\in\N$,
\begin{align*}
&(D_kJ_p(f))(-D_kL^{-1}J_q(g))\\
&=\frac{1}{q}(D_kJ_p(f))(D_kJ_q(g))\\
&=pJ_{p-1}(f(\,\cdot\,,k))J_{q-1}(g(\,\cdot\,,k))\\
&=p\sum_{r=0}^{p-1}r!{p-1\choose r}{q-1\choose r}J_{p+q-2(r+1)}\big(\widetilde{\big(f(\,\cdot\,,k)\star_r^r g(\,\cdot\,,k)\big)}\1_{\Delta_{p+q-2(r+1)}}\big)\,.
\end{align*}
Thus,
\begin{align*}
&\langle DJ_p(f),-DL^{-1}J_q(g)\rangle_{\ell^2(\N)}\\
&=p\sum_{r=0}^{p-1}r!{p-1\choose r}{q-1\choose r}J_{p+q-2(r+1)}\big(\widetilde{\big(f\star_{r+1}^{r+1} g\big)}\1_{\Delta_{p+q-2(r+1)}}\big)\\
&=p\sum_{r=1}^{p}(r-1)!{p-1\choose r-1}{q-1\choose r-1}J_{p+q-2r}\big(\widetilde{\big(f\star_{r}^{r} g\big)}\1_{\Delta_{ p+q-2r}}\big)\,.
\end{align*}
If $p<q$ we get by isometry \eqref{Isometry property} of discrete multiple stochastic integrals,
\begin{align}
&\E\big[\big(\sigma-\langle DJ_p(f),-DL^{-1}J_q(g)\rangle\big)^2\big]\notag\\
&=\sigma^2+p^2\sum_{r=1}^{p}((r-1)!)^2{p-1\choose r-1}^2{q-1\choose r-1}^2(p+q-2r)!\lnormb{2}{p+q-2r}{\widetilde{\big(f\star_{r}^{r} g\big)}\1_{\Delta_{p+q-2r}}}^2\\
&\leq \sigma^2+p^2\sum_{r=1}^{p}((r-1)!)^2{p-1\choose r-1}^2{q-1\choose r-1}^2(p+q-2r)!\lnormb{2}{p+q-2r}{f\star_{r}^{r} g}^2.\label{eq:p<q}
\end{align}
If $p=q$ we get
\begin{align}
&\E\big[\big(\sigma-\langle DJ_p(f),-DL^{-1}J_q(g)\rangle_{\ell^2(\N)}\big)^2\big]\notag\\
&=\E\big[\big((\sigma-p!\langle f,g\rangle_{\ell^2(\N)^{\otimes p}})-p\sum_{r=1}^{p-1}(r-1)!{p-1\choose r-1}^2J_{2(p-r)}\big(\widetilde{\big(f\star_{r}^{r} g\big)}\1_{\Delta_{2(p-r)}}\big)\big)^2\big]\notag\\
&=(\sigma-p!\langle f,g\rangle_{\ell^2(\N)^{\otimes p}})^2\notag\\
&\qquad+p^2\sum_{r=1}^{p-1}((r-1)!)^2{p-1\choose r-1}^4(2(p-1))!\lnormb{2}{2(p-r)}{\widetilde{\big(f\star_{r}^{r} g\big)}\1_{\Delta_{2(p-r)}}}^2\notag\\
&\leq (\sigma-\E[J_p(f)J_q(g)])^2\notag\\
&\qquad+p^2\sum_{r=1}^{p-1}((r-1)!)^2{p-1\choose r-1}^4(2(p-1))!\lnormb{2}{2(p-r)}{f\star_{r}^{r} g}^2.\label{eq:p=q}
\end{align}
Let us now consider the second term in the bound of Theorem \ref{thm:MultiGeneral}. Let $F_i:=J_{q_i}(f^{(i)})$ for $1\leq i\leq d$. We have 
\begin{align}
\E\big[\big\langle\big(\sum_{j=1}^d|DF_j|\big)^3,\sum_{i=1}^d|DL^{-1}F_i|\big\rangle_{\ell^2(\N)}\big]&=\E\big[\big\langle\big(\sum_{j=1}^d|DF_j|\big)^3,\sum_{i=1}^d\frac{1}{q_i}|DF_i|\big\rangle_{\ell^2(\N)}\big]\notag\\
&\leq\frac{1}{\min\{q_1,\dots,q_d\}}\E\big[\sum_{k=1}^{\infty}\big(\sum_{j=1}^d|D_kF_i|\big)^4\big]\notag\\
&\leq\frac{d^3}{\min\{q_1,\dots,q_d\}}\sum_{i=1}^d\E[\|DF_i\|^4_{\ell^4(\N)}]\label{eq:hoelder}\\
&\leq C\!\max\limits_{{i=1,\dots,d}\atop {r=1\dots,q_i-1}}\big\{\lnormb{2}{2(q_i-r)}{f^{(i)}\star_r^rf^{(i)}}^2\big\}\label{eq:Term2},
\end{align}
with a constant $0<C<\infty$ only depending on $d$ and on $q_1,\ldots,q_d$. In \eqref{eq:hoelder} H\"older's inequality with H\"older conjugates $4$ and $\frac{4}{3}$ has been used. Inequality \eqref{eq:Term2} follows from \eqref{eq:4NormHoch4}. Now, Theorem \ref{thm:MultiGeneral} together with \eqref{eq:p<q}, \eqref{eq:p=q} and \eqref{eq:Term2} gives the first inequality in Corollary \ref{cor:MutiIntegrals}.
To derive the second inequality, we need to show that all `mixed' contractions $f\star_r^rg=f^{(i)}\star_r^r f^{(j)}$  appearing in \eqref{eq:p<q} and \eqref{eq:p=q} can be bounded by contractions of the form $f\star_r^rf$ and $g\star_r^rg$ build up either by $f$ or $g$. In the following we use the vector notation ${\bf k}_r:=(k_1,\dots,k_r)\in\N^r$ for summation indices. Moreover ${\bf k}_r{\bf k}_\ell$ stands for the concatenation $(k_1,\ldots,k_{r},k_{r+1},\ldots,k_{r+\ell})$ of two vectors ${\bf k}_r$ and ${\bf k}_\ell$. First, let us consider the case $r<p\leq q$. Then,
\begin{align}
&\lnormb{2}{p+q-2r}{f\star_r^r g}^2=\sum_{\textbf{k}_{p+q-2r}}(f\star_r^r g)({\bf k}_{p+q-2r})^2\notag\\
&=\sum_{\textbf{k}_{p-r}}\sum_{\textbf{k}_{q-r}}\sum_{\textbf{a}_r}\sum_{\textbf{b}_r}f(\textbf{k}_{p-r}\textbf{a}_r)f(\textbf{k}_{p-r}\textbf{b}_r)g(\textbf{k}_{q-r}\textbf{a}_r)g(\textbf{k}_{q-r}\textbf{b}_r)\notag\\
&=\sum_{\textbf{a}_r}\sum_{\textbf{b}_r}(f\star_{p-r}^{p-r}f)(\textbf{a}_r\textbf{b}_r)\,(g\star_{q-r}^{q-r}g)(\textbf{a}_r\textbf{b}_r)\notag\\
&=\big\langle f\star_{p-r}^{p-r}f,g\star_{q-r}^{q-r}g\big\rangle_{\ell^2(\N)^{\otimes 2r}}\leq \lnormb{2}{2r}{f\star_{p-r}^{p-r}f}\,\lnormb{2}{2r}{g\star_{q-r}^{q-r}g},\notag
\end{align}
which implies that 
\begin{align}
\lnormb{2}{p+q-2r}{f\star_r^r g}\leq\max\big\{\lnormb{2}{2r}{f\star_{p-r}^{p-r}f}\, ,\,\lnormb{2}{2r}{g\star_{q-r}^{q-r}g}\big\}.\label{eq:Absch1}
\end{align}
If $r=p<q$, we get
\begin{align}
&\lnorm{2}{(q-p)}{f\star_p^p g}^2
=\sum_{\textbf{k}_{q-p}}(f\star_p^pg)(\textbf{k}_{q-p})^2\notag\\
&=\sum_{\textbf{k}_{q-p}}\sum_{\textbf{a}_{r}}\sum_{\textbf{b}_{r}}f(\textbf{a}_{r})f(\textbf{b}_{r})g(\textbf{k}_{q-p}\textbf{a}_{r})g(\textbf{k}_{q-p}\textbf{b}_{r})\notag\\
&=\sum_{\textbf{a}_{r}}\sum_{\textbf{b}_{r}}f(\textbf{a}_{r})f(\textbf{b}_{r})(g\star_{q-p}^{q-p}g)(\textbf{a}_{r}\textbf{b}_{r})\,,\notag
\end{align}
Hence,
\begin{equation}\label{eq:Absch2}
\lnormb{2}{(q-p)}{f\star_p^p g}\leq \lnormb{2}{p}{f}\big(\lnormb{2}{2p}{g\star_{q-p}^{q-p}g}\big)^ {1/2}\,.
\end{equation}
Now, the second estimate \eqref{eq:Cor2} in Corollary \ref{cor:MutiIntegrals} follows from the first estimate \eqref{eq:Cor1} by means of \eqref{eq:Absch1} and \eqref{eq:Absch2}. This completes the proof.


\end{proof}

\section{Applications}\label{sec:applications}

\subsection{Infinite weighted $2$-runs}

The notion of a $2$-run is one of the most simple dependency structures and has been studied exhaustively in the literature, see \cite{BalKou}. For example, \cite{SteinProc} uses the so-called local approach, while \cite{ReinertRoellin,RinottRotar} apply exchangeable pair coupling constructions. The first Berry-Esseen bound for the number of finite $2$-runs appeared in \cite{Godbol}. In contrast to most of the previously available results, our method, which is based on a discrete version of the Malliavin calculus of variations, allows to treat infinite weighted $2$-runs. This continues the line of research initiated in Section 5 of \cite{ReiPec} and provides, up to our best knowledge, the first Berry-Esseen bound for such functionals.

\medspace

Let $X=(X_i)_{i\in\Z}$ be a double-sided sequence of i.i.d.\ Rademacher random variables and let for each $n\in\N$, $(a_i^{(n)})_{i\in\Z}$ be a double-sided sequence of real numbers. The sequence $(F_n)_{n\in\N}$ of normalized infinite weighted $2$-runs is then defined as
$$F_n:={G_n-\E G_n\over \sqrt{\Var G_n}}\,,\qquad G_n:=\sum_{i\in\Z}a_i^{(n)}\,Y_iY_{i+1}\,,\qquad n\in\N,$$
where $Y_i:=\frac{1}{2}\big(1-X_i\big)$ for $i\in\Z$. In other words, $G_n$ counts the weighted number of subsequences of $1$'s of length two in an infinite double-sided sequence of Bernoulli trials. We notice that $$\Var G_n={3\over 16}\sum_{i\in\Z}(a_i^{(n)})^2+{1\over 8}\sum_{i\in\Z}a_i^{(n)}a_{i+1}^{(n)}\,.$$ Our result reads as follows.

\begin{theorem}
Let $(F_n)_{n\in\N}$ be a sequence of normalized infinite weighted $2$-runs as above and let $N$ be a standard Gaussian random variable. Then, $$d_K(F_n,N)\leq C\max\Big\{(\Var G_n)^{-3/2}\sum_{i\in\Z}|a_i^{(n)}|^3,(\Var G_n)^{-1}\Big(\sum_{i\in\Z}(a_i^{(n)})^4\Big)^{1/2}\,\Big\}$$
with a constant $0<C<\infty$ not depending on $n$.
\end{theorem}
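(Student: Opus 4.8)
The plan is to exhibit $F_n$ as an element of the direct sum of the first two Rademacher chaoses and then to invoke Theorem~\ref{thm:SumSingle+Double}. First I would write $Y_iY_{i+1}=\tfrac14(1-X_i-X_{i+1}+X_iX_{i+1})$; substituting this into $G_n$, summing against the weights, subtracting the mean $\E G_n=\tfrac14\sum_{i\in\Z}a_i^{(n)}$ and dividing by $\sqrt{\Var G_n}$ yields $F_n=J_1(f^{(1)})+J_2(f^{(2)})$ with
\[
f^{(1)}(i)=-\frac{a_i^{(n)}+a_{i-1}^{(n)}}{4\sqrt{\Var G_n}}\,,\qquad
f^{(2)}(i,j)=\frac{a_{\min(i,j)}^{(n)}}{8\sqrt{\Var G_n}}\,\1_{\{|i-j|=1\}}\,.
\]
Since $G_n\in L^2(\Omega)$ forces $(a_i^{(n)})_{i\in\Z}\in\ell^2(\Z)$, we have $f^{(1)}\in\ell^2(\Z)$ and $f^{(2)}\in\ell^2_0(\Z)^{\circ2}$ (it is symmetric and vanishes on the diagonal), and $\E[F_n^2]=1$ by construction, so all hypotheses of Theorem~\ref{thm:SumSingle+Double} are satisfied.

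Next I would estimate each of the six terms on the right-hand side of Theorem~\ref{thm:SumSingle+Double}. The decisive feature is that both kernels are \emph{banded}: $f^{(2)}(i,j)$ vanishes unless $|i-j|=1$, so every contraction entering the bound is supported on a fixed neighbourhood of the diagonal and can be written out explicitly. For instance one computes
\[
(f^{(2)}\star_1^1 f^{(2)})(i,i)=\frac{(a_i^{(n)})^2+(a_{i-1}^{(n)})^2}{64\,\Var G_n}\,,\qquad
(f^{(2)}\star_1^1 f^{(2)})(i,i\pm2)=\frac{a_i^{(n)}a_{i\pm1}^{(n)}}{64\,\Var G_n}\,,
\]
and $(f^{(1)}\star_1^1 f^{(2)})(j)$ is a fixed linear combination of products $a_k^{(n)}a_l^{(n)}$ with $|k-j|,|l-j|\le2$. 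Applying the elementary inequality $|a_k^{(n)}a_l^{(n)}|\le\tfrac12((a_k^{(n)})^2+(a_l^{(n)})^2)$ inside each squared contraction and summing, the first five terms of the bound are each at most $C\,(\Var G_n)^{-1}\big(\sum_{i\in\Z}(a_i^{(n)})^4\big)^{1/2}$ with an absolute constant $C$ coming from the banding width; the estimate $\|f^{(1)}\|_{\ell^4(\Z)}^2\le C(\Var G_n)^{-1}(\sum_i(a_i^{(n)})^4)^{1/2}$ (via $(x+y)^4\le 8(x^4+y^4)$) is of the same elementary nature. For the sixth term one uses
\[
|f^{(1)}(k)|+2\sum_{j\in\Z}|f^{(2)}(j,k)|\le\frac{|a_k^{(n)}|+|a_{k-1}^{(n)}|}{2\sqrt{\Var G_n}}\,,
\]
so that cubing and summing gives $2\sum_{k\in\Z}\big(|f^{(1)}(k)|+2\sum_j|f^{(2)}(j,k)|\big)^3\le C(\Var G_n)^{-3/2}\sum_{i\in\Z}|a_i^{(n)}|^3$. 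Taking the maximum of the two types of bounds yields the claim.

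I do not expect a genuine obstacle: once the chaotic decomposition and the banded structure are in hand, the proof is bookkeeping, and the two exponents in the maximum simply reflect the cubic term produced by the first chaos versus the $\ell^4$-term produced by the second. The only points requiring a little care are keeping the shifts $i\mapsto i\pm1$ straight when translating between the weights $a_i^{(n)}$ and the kernels, and noting that $\ell^2(\Z)\subseteq\ell^3(\Z)\cap\ell^4(\Z)$ makes the right-hand side finite, so the statement is meaningful precisely when $(a_i^{(n)})_i\in\ell^2(\Z)$ and $\Var G_n>0$. One could instead feed the kernels directly into Theorem~\ref{mainthm} and bound the quantities $A_1(F_n),\dots,A_4(F_n)$ by hand, but Theorem~\ref{thm:SumSingle+Double} already carries out exactly this computation in the required generality.
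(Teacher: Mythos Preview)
Your proposal is correct and follows essentially the same route as the paper: write $F_n=J_1(f^{(1)})+J_2(f^{(2)})$ with the banded kernels you describe, compute the contractions appearing in Theorem~\ref{thm:SumSingle+Double}, and bound each one via $|ab|\le\tfrac12(a^2+b^2)$ (respectively the cube inequality for the last term). The paper carries out exactly these estimates, with the same grouping of terms into those dominated by $(\Var G_n)^{-1}\big(\sum_i(a_i^{(n)})^4\big)^{1/2}$ and those dominated by $(\Var G_n)^{-3/2}\sum_i|a_i^{(n)}|^3$. One cosmetic point: your formula $(f^{(2)}\star_1^1 f^{(2)})(i,i-2)=\tfrac{a_i^{(n)}a_{i-1}^{(n)}}{64\Var G_n}$ has the indices shifted---the correct value is $\tfrac{a_{i-2}^{(n)}a_{i-1}^{(n)}}{64\Var G_n}$---but as you anticipated this is harmless after summation.
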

\begin{proof}
We notice first that each $F_n$ has chaotic decomposition $F_n=J_1(f_n^{(1)})+J_2(f_n^{(2)})$ with $f_n^{(1)}$ and $f_n^{(2)}$ given by
\begin{align*}
f_n^{(1)}(k) &= {1\over 4\sqrt{\Var G_n}}\sum_{i\in\Z}a_i^{(n)}\big(\1_{\{k=i\}}+\1_{\{k=i+1\}}\big)\,,\qquad k\in\Z\,,\\
f_n^{(2)}(k,l) &={1\over 8\sqrt{\Var G_n}}\sum_{i\in\Z}a_i^{(n)}\big(\1_{\{k=i,l=i+1\}}+\1_{\{k=i+1,l=i\}}\big)\,,\qquad k,l\in\Z\,.
\end{align*}
Thus,
\begin{align*}
(f_n^{(1)}\star_1^1 f_n^{(2)})(i) &= {1\over 32\Var G_n}\big(a_i^{(n)}a_{i+1}^{(n)}+(a_{i-1}^{(n)})^2+\big(a_i^{(n)}\big)^2+a_{i-1}^ {(n)}a_{i-2}^{(n)}\big)\,,\\
(f_n^{(2)}\star_1^1 f_n^{(2)})(i,j) &={1\over 64\Var G_n}\big(a_i^{(n)}a_{i+1}^{(n)}\1_{\{j=i+2\}}+a_j^{(n)}a_{j+1}^{(n)}\1_{\{j=i-2\}}\big)\,,\quad i\neq j\,,\\
(f_n^{(2)}\star_1^1f_n^{(2)})(i,i)&= {1\over 64\Var G_n}\big((a_{i-1}^{(n)})^2+(a_i^{(n)})^2\big) \,,
\end{align*}
and consequently
\begin{align*}
\|f_n^{(1)}\star_1^1 f_n^{(2)}\|_{\ell^2(\Z)} &\leq {\sqrt{2}\over 16\Var G_n}\Big(\sum_{i\in\Z}(a_i^{(n)})^2(a_{i+1}^{(n)})^2+\sum_{i\in\Z}(a_i^{(n)})^4\Big)^{1/2}\\
&\leq {1\over 8\Var G_n}\Big(\sum_{i\in\Z}(a_i^{(n)})^4\Big)^{1/2}\,,\\
\|(f_n^{(2)}\star_1^1 f_n^{(2)})\1_{\Delta_2}\|_{\ell^2(\Z)^{\otimes 2}} & \leq {\sqrt{2}\over 64\Var G_n}\Big(\sum_{i\in\Z}(a_i^{(n)})^2(a_{i+1}^{(n)})^2\Big)^{1/2}\\
&\leq {\sqrt{2}\over 64\Var G_n}\Big(\sum_{i\in\Z}(a_i^{(n)})^4\Big)^{1/2}\,,\\
\|(f_n^{(2)}\star_1^1 f_n^{(2)})\1_{\Delta_2^c}\|_{\ell^2(\Z)^{\otimes 2}} & \leq {1\over 64\Var G_n}\bigg(\sum_{i\in\Z}\big((a_{i-1}^{(n)})^2+(a_i^{(n)})^2\big)^2\bigg)^{1/2} \\
&\leq {\sqrt{2}\over 64\Var G_n}\bigg(\sum_{i\in\Z}\big((a_{i-1}^{(n)})^4+(a_i^{(n)})^4\big)\bigg)^{1/2}\\
&\leq {1\over 32\Var G_n}\bigg(\sum_{i\in\Z}(a_i^{(n)})^4\bigg)^{1/2}\,,
\end{align*}
where we have used the Cauchy-Schwarz inequality.
Moreover,
\begin{align*}
\|f_n^{(1)}\|_{\ell^4(\Z)}^2 \leq {1\over 4 \Var G_n}\bigg(\sum_{i\in\Z}(a_i^{(n)})^4\bigg)^{1/2}\,.
\end{align*}

Multiple use of the Cauchy-Schwarz inequality gives
\begin{align*}
\sum_{k,j\in\Z}(f_n^{(1)}(k))^2(f_n^{(2)}(k,j))^2 &= {1\over 4096(\Var G_n)^2}\sum_{i\in\Z}\big((a_{i-1}^{(n)})^2+(a_i^{(n)})^2\big)\big(a_{i-1}^{(n)}+a_i^{(n)}\big)^2\\
&\leq  {1\over 2048(\Var G_n)^2}\sum_{i\in\Z}\big((a_{i-1}^{(n)})^2+(a_i^{(n)})^2\big)^2\\
&\leq  {1\over 1024(\Var G_n)^2}\sum_{i\in\Z}\big((a_{i-1}^{(n)})^4+(a_i^{(n)})^4\big)\\
& \leq {1\over 512(\Var G_n)^2}\sum_{i\in\Z}(a_i^{(n)})^4\,.
\end{align*}
Hence,
\begin{align*}
\bigg(\sum_{k,j\in\Z}(f_n^{(1)}(k))^2(f_n^{(2)}(k,j))^2\bigg)^{1/2} &\leq\frac{1}{\sqrt{512}\Var G_n}\bigg(\sum_{i\in\Z}(a_i^{(n)})^4\bigg)^{1/2}.
\end{align*}
Finally, by H\"older's inequality, we have
\begin{align*}
\sum_{k\in\Z}\Big(|f^{(1)}(k)|+2\sum_{j\in\Z}|f^{(2)}(j,k)|\Big)^3 &\leq 4\sum_{k\in\Z}\Big(|f^{(1)}(k)|^3+8\Big(\sum_{j\in\Z}|f^{(2)}(j,k)|\Big)^3\Big)\\
&=4\sum_{k\in\Z}|f^{(1)}(k)|^3+32\sum_{k\in\Z}\Big(\sum_{j\in\Z}|f^{(2)}(j,k)|\Big)^3\,,
\end{align*}
where each of these summands is bounded by a constant times $${1\over (\Var G_n)^{3/2}}\sum_{i\in\Z}|a_i^{(n)}|^3\,.$$ Now, applying Theorem \ref{thm:SumSingle+Double} proves the result.
\end{proof}

\subsection{A combinatorial central limit theorem}

Our second application deals with an extended version of a combinatorial central limit theorem of Blei and Janson \cite{BleJan}, which has also been studied in \cite{ReiPec}. Using Theorem \ref{BerryEsseenMultipleIntegrals} we can strengthen these results to a Berry-Esseen bound without imposing further conditions. 

The general set-up is as follows. Fix $q\geq 2$ and let $F\subseteq\Delta_q$ be a (possibly infinite) non-empty subset of $\N^q$.
Let further $b=(b_i)_{i\geq 1}\in\ell^2(\N)$ and define a measure $\mu_b$ on $\N$ by putting $$\mu_b(B):=\sum_{i\in B}b_i^2\,,\qquad B\subset\N\,.$$ In what follows we assume that $\mu_b^{\otimes q}(F)>0$, where $\mu_b^{\otimes q}$ stands for the $q$-fold product measure of $\mu_b$. Given a Rademacher sequence $X=(X_i)_{i\geq 1}$, we now define the random variable $S^{(b)}(F)$ by
\begin{equation}\label{eq:DefSbF}
S^{(b)}(F):={1\over (q!\mu_b^{\otimes q}(F))^{1/2}}\sum_{(i_1,\ldots,i_q)\in F}b_{i_1}\cdots b_{i_q}\,X_{i_1}\cdots X_{i_q}\,.
\end{equation}
To discuss the distance between $S^{(b)}(F)$ and a standard Gaussian random variable we need further notation. By $F_j^*$ we denote the collection of all $(i_1,\ldots,i_q)\in F$ such that $i_k=j$ for some $k\in\{1,\ldots,q\}$. Further, let $F^\sharp\subset F\times F$ be defined as follows. A pair $\big((i_1,\ldots,i_q),(j_1,\ldots,j_q)\big)$ belongs to $F^\sharp$ if $\{i_1,\ldots,i_q\}\cap\{j_1,\ldots,j_q\}=\emptyset$ and there are $(k_1,\ldots,k_q),(l_1,\ldots,l_q)\in F$ such that $\{k_1,\ldots,k_q,l_1,\ldots,l_q\}=\{i_1,\ldots,i_q,j_1,\ldots,j_q\}$ with the property that $(k_1,\ldots,k_q)$ does not coincide with $(i_1,\ldots,i_q)$ or $(j_1,\ldots,j_q)$. We finally define the quantities $\Phi^{(b)}(F)$ and $\Psi_j^{(b)}(F)$ by
\begin{equation}\label{eq:SeqPhiPsi}
\Phi^{(b)}(F):={\mu_b^{\otimes 2q}(F^\sharp)^{1/2}\over \mu_b^{\otimes q}(F)}\qquad\text{and}\qquad\Psi_j^{(b)}(F):={\mu_b^{\otimes q}(F_j^*)\over \mu_b^{\otimes q}(F)}\,.
\end{equation}

\begin{theorem}\label{thm:BleiJansonGeneral}
Let $N$ be a standard Gaussian random variable. Then there are constants $0<C_1,C_2<\infty$ only depending on $q$ such that $$d_K(S^{(b)}(F),N)\leq C_1\,\Phi^{(b)}(F)+C_2\,\big(\sup_{j\geq 1}\Psi_j^{(b)}(F)\big)^{1/4}\,.$$
\end{theorem}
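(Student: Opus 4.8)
The plan is to realize $S^{(b)}(F)$ as a discrete multiple stochastic integral and then invoke Theorem \ref{BerryEsseenMultipleIntegrals}. First I would observe that, since $F\subseteq\Delta_q$ and $b\in\ell^2(\N)$, the kernel $f:=\big(q!\,\mu_b^{\otimes q}(F)\big)^{-1/2}\,\widetilde g$, where $g(i_1,\dots,i_q):=b_{i_1}\cdots b_{i_q}\1_F(i_1,\dots,i_q)$, lies in $\ell_0^2(\N)^{\circ q}$ and satisfies $J_q(f)=S^{(b)}(F)$, the symmetrization being harmless because $X_{i_1}\cdots X_{i_q}$ is already symmetric in its indices. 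A direct second-moment computation yields $q!\,\|f\|_{\ell^2(\N)^{\otimes q}}^2=\E[(S^{(b)}(F))^2]=1$, so the variance-defect term in Theorem \ref{BerryEsseenMultipleIntegrals} vanishes. We may moreover assume that $C_1\Phi^{(b)}(F)+C_2\big(\sup_j\Psi_j^{(b)}(F)\big)^{1/4}<1$, since $d_K(S^{(b)}(F),N)\le1$ always; in combination with the contraction estimates below this also validates the hypothesis $\|(f\star_r^r f)\1_{\Delta_{2(q-r)}}\|<1$. After this reduction it remains to bound $\|f\star_r^r f\|_{\ell^2(\N)^{\otimes 2(q-r)}}$ for $r=1,\dots,q-1$, after which the second estimate in Theorem \ref{BerryEsseenMultipleIntegrals} delivers the claim.

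The heart of the argument is a combinatorial bound for $\|f\star_r^r f\|^2$. Writing $c:=\big(q!\,\mu_b^{\otimes q}(F)\big)^{-1/2}$ and expanding, this squared norm is a sum of terms $f(\mathbf i\mathbf a)f(\mathbf k\mathbf a)f(\mathbf i\mathbf b)f(\mathbf k\mathbf b)=c^4\,b_{\mathbf i}^2 b_{\mathbf k}^2 b_{\mathbf a}^2 b_{\mathbf b}^2$ indexed by four $q$-tuples $\mathbf i\mathbf a,\ \mathbf k\mathbf a,\ \mathbf i\mathbf b,\ \mathbf k\mathbf b$ of $F$, where $\mathbf i,\mathbf k$ run over $(q-r)$-blocks and $\mathbf a,\mathbf b$ over the contracted $r$-blocks; for $\|(f\star_r^r f)\1_{\Delta_{2(q-r)}^c}\|^2$ the blocks $\mathbf i$ and $\mathbf k$ additionally share a coordinate. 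I would split the sum according to whether the four tuples involve $2q$ distinct indices or not. In the generic case, with $\mathbf i,\mathbf k,\mathbf a,\mathbf b$ pairwise disjoint, the tuples $\mathbf i\mathbf a$ and $\mathbf k\mathbf b$ have disjoint supports and $\mathbf i\mathbf b,\mathbf k\mathbf a$ witness $(\mathbf i\mathbf a,\mathbf k\mathbf b)\in F^\sharp$ (distinct from $\mathbf i\mathbf a,\mathbf k\mathbf b$ since $\mathbf a\neq\mathbf b$ and $\mathbf i\neq\mathbf k$), and because $b_{\mathbf i}^2 b_{\mathbf k}^2 b_{\mathbf a}^2 b_{\mathbf b}^2=(b_{\mathbf i\mathbf a})^2(b_{\mathbf k\mathbf b})^2$ this part is at most $C\,c^4\mu_b^{\otimes 2q}(F^\sharp)=C\,(q!)^{-2}\,\Phi^{(b)}(F)^2$. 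In every other case (which includes the whole off-diagonal restriction above) some index is repeated; since each of the four $q$-tuples lies in $\Delta_q$ and $\mathbf i,\mathbf k$ are internally distinct, a short check shows that this forces $\mathbf a\cap\mathbf b\neq\emptyset$ or $\mathbf i\cap\mathbf k\neq\emptyset$, and in either situation there is a single index $j^*$ contained in all four tuples; applying Cauchy--Schwarz to decouple $(\mathbf i,\mathbf a)$ from $(\mathbf k,\mathbf b)$ and bounding the inner $f^2$-sums over $F$-tuples through $j^*$ by $c^2\mu_b^{\otimes q}(F_{j^*}^*)$, this part is at most $C\,c^4\sum_{j}\mu_b^{\otimes q}(F_j^*)^2=C\,(q!)^{-2}\sum_j\big(\Psi_j^{(b)}(F)\big)^2\le C\,(q!)^{-2}\,q\,\sup_j\Psi_j^{(b)}(F)$, where the last step uses the elementary identity $\sum_j\Psi_j^{(b)}(F)=q$. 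Combining the two cases gives $\|f\star_r^r f\|\le C\big(\Phi^{(b)}(F)+(\sup_j\Psi_j^{(b)}(F))^{1/2}\big)$, and since $\Psi_j^{(b)}(F)=\mu_b^{\otimes q}(F_j^*)/\mu_b^{\otimes q}(F)\le 1$ this is a fortiori $\le C\big(\Phi^{(b)}(F)+(\sup_j\Psi_j^{(b)}(F))^{1/4}\big)$; substituting into Theorem \ref{BerryEsseenMultipleIntegrals} finishes the proof.

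The main obstacle is the bookkeeping in this last step: one must carefully enumerate the coincidence patterns among the at most $2q$ indices carried by $\mathbf i,\mathbf k,\mathbf a,\mathbf b$, check that the unique fully generic pattern is precisely the one encoded by $F^\sharp$ (with the witness tuples genuinely lying in $F$ and differing from the originals), and verify that every degenerate pattern as well as every on-diagonal pattern really does force a common index shared by all four tuples, so as to be absorbable into a $\Psi_j^{(b)}(F)$-weighted sum via Cauchy--Schwarz. Keeping the $q$-dependent combinatorial multiplicities under control throughout, together with the two elementary facts $\sum_j\Psi_j^{(b)}(F)=q$ and $\Psi_j^{(b)}(F)\le1$, are the only inputs needed beyond Theorem \ref{BerryEsseenMultipleIntegrals} itself.
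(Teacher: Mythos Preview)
Your proposal is correct and follows essentially the same route as the paper: write $S^{(b)}(F)=J_q(f)$ with unit variance, apply Theorem~\ref{BerryEsseenMultipleIntegrals}, and bound the contraction norms by splitting into a ``generic'' part (all $2q$ indices distinct, controlled by $F^\sharp$) and a ``degenerate'' part (a common index $j^*$ forced into all four tuples, controlled by $\sum_j \Psi_j^{(b)}(F)^2\le q\sup_j\Psi_j^{(b)}(F)$). The paper invokes the \emph{first} inequality in Theorem~\ref{BerryEsseenMultipleIntegrals} (separating $\|(f\star_r^rf)\1_{\Delta}\|$ and $\|f\star_r^{r-1}f\|$) and outsources the combinatorics to \cite{ReiPec}, whereas you use the \emph{second} inequality and carry out the case analysis on the full $\|f\star_r^rf\|$ yourself; but the substance of the two arguments is the same, and both in fact yield the slightly sharper exponent $1/2$ (hence a fortiori $1/4$) on $\sup_j\Psi_j^{(b)}(F)$.
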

\begin{proof}
We notice that $S^{(b)}(F)$ can be written as a discrete multiple stochastic integral $J_q(f)$ of order $q$ with kernel function $$f(i_1,\ldots,i_q)={b_{i_1}\cdots b_{i_q}\over (q!\mu_b^{\otimes q}(F))^{1/2}}\,\1_{\{(i_1,\ldots,i_q)\in F\}}\,.$$ Furthermore, $S^{(b)}(F)$ has unit variance so that the first inequality of Theorem \ref{BerryEsseenMultipleIntegrals} implies that $d_K(S^{(b)}(F),N)$ is bounded from above by $$C\,\max\big\{\max_{r=1,\ldots,q-1}\{\|(f\star_r^r f)\1_{\Delta_{2(q-r)}}\|_{\ell^2(\N)^{\otimes 2(q-r)}},\max_{r=1,\ldots,q}\{\|f\star_r^{r-1}f\|_{\ell^2(\N)^{\otimes(2(q-r)+1)}}\}\big\}$$ with a constant $0<C<\infty$ only depending on $q$. These contraction norms can be computed exactly as in the proof of Theorem 6.4 in \cite{ReiPec}, so that we leave out the details. This gives the result.
\end{proof}

We now specialize Theorem \ref{thm:BleiJansonGeneral} to the set-up discussed in \cite{BleJan}. For this, let $(F_n)_{n\geq 1}$ be a sequence of non-empty subsets of $\N^q$ satisfying the following two properties:
\begin{itemize}
\item[(i)] $F_n\subset\Delta_q\cap\{1,\ldots,n\}^q$ for all $n\in\N$,
\item[(ii)] $F_n$ is symmetric in that $(i_1,\ldots,i_q)\in F_n$ implies that $(i_{\sigma(1)},\ldots,i_{\sigma(q)})\in F_n$ for all permutations $\sigma$ of $\{1,\ldots,q\}$.
\end{itemize}
Further let $b_0=(b_i)_{i\geq 1}$ be such that $b_1=\ldots=b_q=1$ and $b_i=0$ for all $i\geq q+1$. The sequence $(S_n)_{n\geq 1}$ of random variables is then given by $S_n:=S^{(b_0)}(F_n)$ with $S^{(b_0)}(F_n)$ as at \eqref{eq:DefSbF}. Note that the measure $\mu_{b_0}$ reduces to the counting measure restricted to $\{1,\ldots,q\}$ and we write $\Phi(F_n)$ and $\Psi_j(F_n)$ in \eqref{eq:SeqPhiPsi} instead of $\Phi^{(b_0)}(F_n)$ and $\Psi_j^{(b_0)}(F_n)$, respectively. For this setting, Theorem \ref{thm:BleiJansonGeneral} yields the following Berry-Essen bound, which improves Theorem 1.7 in \cite{BleJan} and Theorem 6.4 in \cite{ReiPec}.

\begin{corollary}\label{cor:BleiJanson}
Let $N$ be a standard Gaussian random variable. Then there are constants $0<C_1,C_2<\infty$ only depending on $q$ such that $$d_K(S_n,N)\leq C_1\,\Phi(F_n)+C_2\big(\max_{1\leq j\leq n}\Psi_j(F_n)\big)^{1/4}$$ for all $n\in\N$.
\end{corollary}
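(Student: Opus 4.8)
The plan is to obtain Corollary \ref{cor:BleiJanson} as an immediate specialization of Theorem \ref{thm:BleiJansonGeneral} to the particular weight sequence $b_0$ described above. First I would note that, by construction, $S_n = S^{(b_0)}(F_n)$ is exactly of the form \eqref{eq:DefSbF} with $F = F_n$ and $b = b_0$, so Theorem \ref{thm:BleiJansonGeneral} applies verbatim and yields
\[
d_K(S_n,N)\leq C_1\,\Phi^{(b_0)}(F_n)+C_2\,\big(\sup_{j\geq 1}\Psi_j^{(b_0)}(F_n)\big)^{1/4}
\]
with constants $C_1,C_2$ depending only on $q$.

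Next I would unwind the two quantities $\Phi^{(b_0)}(F_n)$ and $\Psi_j^{(b_0)}(F_n)$. Since all nonzero entries of $b_0$ equal $1$, the associated measure $\mu_{b_0}$ is the counting measure on the relevant index set, so that $\mu_{b_0}^{\otimes q}$ and $\mu_{b_0}^{\otimes 2q}$ count the cardinalities of $F_n$, $(F_n)_j^*$ and $F_n^{\sharp}$. Plugging this into \eqref{eq:SeqPhiPsi} reproduces precisely the definitions of $\Phi(F_n)$ and $\Psi_j(F_n)$, hence $\Phi^{(b_0)}(F_n) = \Phi(F_n)$ and $\Psi_j^{(b_0)}(F_n) = \Psi_j(F_n)$ for every $j$.

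Finally I would reduce the supremum over all $j \geq 1$ to the maximum over $1 \leq j \leq n$. By property (i) we have $F_n \subset \{1,\dots,n\}^q$, so for any $j > n$ no tuple in $F_n$ has a coordinate equal to $j$; thus $(F_n)_j^* = \emptyset$ and $\Psi_j(F_n) = 0$, whence $\sup_{j\geq 1}\Psi_j(F_n) = \max_{1\leq j\leq n}\Psi_j(F_n)$. Substituting this into the bound above gives the claim. Since everything is a bookkeeping translation of the already-established Theorem \ref{thm:BleiJansonGeneral}, I do not expect a genuine obstacle; the only point that deserves a line of verification is that the symmetry hypothesis (ii) on $F_n$ ensures the kernel representing $S_n$ indeed lies in $\ell_0^2(\N)^{\circ q}$, as is implicitly used when invoking Theorem \ref{thm:BleiJansonGeneral}.
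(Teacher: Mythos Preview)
Your proposal is correct and matches the paper's approach: the corollary is presented there as an immediate specialization of Theorem \ref{thm:BleiJansonGeneral} to the weight sequence $b_0$, with no separate proof given. Your additional remarks on why the supremum over $j\geq 1$ reduces to a maximum over $1\leq j\leq n$ and on the role of the symmetry hypothesis (ii) simply make explicit what the paper leaves implicit.
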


A concrete situation to which Corollary \ref{cor:BleiJanson} can be applied concerns fractional Cartesian products. We briefly recall their definition and refer to \cite{Ble} for further details. Fix integers $q\geq 3$ and $m\in\{2,\ldots,q\}$, and let $\{M_1,\ldots,M_q\}$ be a collection of distinct non-empty subsets of $[q]:=\{1,\ldots,q\}$ with the following properties:
\begin{itemize}
\item[(i)] $\{M_1,\ldots,M_q\}$ is a connected cover of $[q]$, i.e., $\bigcup\limits_{i=1}^qM_i=[q]$ and $\{M_1,\ldots,M_q\}$ cannot be partitioned into two disjoint partial covers,
\item[(ii)] each subset has exactly $m$ elements,
\item[(iii)] each index $j\in[q]$ appears in exactly $m$ of the subsets $M_1,\ldots,M_q$.
\end{itemize}
For a subset $M\subset[q]$ and a vector ${\bf y}=(y_1,\ldots,y_q)$ we write $\pi_M{\bf y}:=(y_j:j\in M)$ for the projection of ${\bf y}$ on $M$. Now, let for an integer $n\geq q^m$, $K:=\max\{k\in\N:k\leq n^{1/m}\}$, fix a one-to-one map $\varphi:[K]^m\to[n]$ and define $$F_n^{**}:=\{(\varphi(\pi_{M_1}{\bf y}),\ldots,\varphi(\pi_{M_q}{\bf y})):{\bf y}=(y_1,\ldots,y_q)\in[K]^q\}\subset[n]^q\,.$$ In general, $F_n^{**}$ is not symmetric and may contain diagonal elements. We thus define
$$F_n:=\{(i_1,\ldots,i_q)\in\Delta_q^n:(i_{\sigma(1)},\ldots,i_{\sigma(q)})\in F_n^{**}\cap\Delta_q^n\text{ for some permutation $\sigma$ of $[q]$}\}\,,$$
where $\Delta_q^n$ stands for $\Delta_q\cap\{1,\ldots,n\}^q$. The sequence $(F_n)_{n\geq 1}$ is what is called a fractional Cartesian product in the literature. The name comes from the fact that $F_n$ has (fractional) combinatorial dimension $q/m$, a notion for which we refer to \cite{Ble2}. We take such a fractional Cartertesian product as in \eqref{eq:DefSbF} as input sets for our random variables and write $S_n^{\rm fCP}$ in this case. Then Corollary \ref{cor:BleiJanson} and the computations leading to Proposition 6.6 in \cite{ReiPec} imply the following Berry-Esseen bound.

\begin{corollary}\label{cor:BleiJansonfCP}
Let $N$ be a standard Gaussian random variable. Then there is a constant $0<C<\infty$ such that $$d_K(S_n^{\rm fCP},N)\leq C\, n^{-1/(2m)}$$ for all $n\geq 1$.
\end{corollary}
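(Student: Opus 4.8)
The plan is to feed the fractional Cartesian product $(F_n)_{n\ge 1}$ into Corollary \ref{cor:BleiJanson} and then estimate the two resulting quantities $\Phi(F_n)$ and $\sup_{1\le j\le n}\Psi_j(F_n)$. The starting point is a cardinality count. Since $\varphi\colon[K]^m\to[n]$ is one-to-one and $\{M_1,\dots,M_q\}$ covers $[q]$, the map ${\bf y}\mapsto(\varphi(\pi_{M_1}{\bf y}),\dots,\varphi(\pi_{M_q}{\bf y}))$ is injective on $[K]^q$, so $|F_n^{**}|=K^q$; passing to $F_n$ only removes a lower-order diagonal part and symmetrises (at most a $q!$-fold overcount), whence $\mu_{b_0}^{\otimes q}(F_n)=|F_n|\asymp n^{q/m}$ with constants depending only on $q$ and $m$ (recall $K=\lfloor n^{1/m}\rfloor$).

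For the anti-concentration-type term, fix $j\in[n]$. A tuple of $F_n$ has a coordinate equal to $j$ precisely when, for some $k$, the block $\pi_{M_k}{\bf y}$ of the underlying vector ${\bf y}\in[K]^q$ is mapped by $\varphi$ to $j$; injectivity of $\varphi$ then pins down the $m$ coordinates of ${\bf y}$ in $M_k$, leaving at most $K^{q-m}$ vectors ${\bf y}$, hence $|F_{n,j}^*|\lesssim K^{q-m}\asymp n^{(q-m)/m}$. Therefore $\Psi_j(F_n)=\mu_{b_0}^{\otimes q}(F_{n,j}^*)/\mu_{b_0}^{\otimes q}(F_n)\lesssim n^{-1}$ uniformly in $j$, and since $m\ge 2$,
\[\big(\sup_{1\le j\le n}\Psi_j(F_n)\big)^{1/4}\lesssim n^{-1/4}\le n^{-1/(2m)}\,.\]
The substantial term is $\Phi(F_n)=\mu_{b_0}^{\otimes 2q}(F_n^\sharp)^{1/2}/\mu_{b_0}^{\otimes q}(F_n)$, for which one needs $\mu_{b_0}^{\otimes 2q}(F_n^\sharp)\lesssim n^{(2q-1)/m}$, so that $\Phi(F_n)\lesssim n^{(2q-1)/(2m)-q/m}=n^{-1/(2m)}$. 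Here the \emph{connectedness} of the cover $\{M_1,\dots,M_q\}$ is essential: a pair of disjoint tuples in $F_n^\sharp$ arises from two vectors in $[K]^q$ whose $\pi_{M_k}$-blocks can be reshuffled into two further admissible vectors, and connectedness forces any non-trivial reshuffle to link sufficiently many coordinates of the two vectors to gain at least one factor $K^{-1}=n^{-1/m}$ over the trivial count $K^{2q}$. This is exactly the content of the computations leading to Proposition 6.6 in \cite{ReiPec}, carried out there in the equivalent language of the contraction norms $\|f\star_r^rf\|$ of the kernel $f$ representing $S_n^{\rm fCP}=J_q(f)$; I would import those estimates verbatim.

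The main obstacle is this last combinatorial bound on $\mu_{b_0}^{\otimes 2q}(F_n^\sharp)$ (equivalently, on the contraction norms), since it is the only place where the fine structure of the cover — in particular its indecomposability into two partial covers — must be used; for a decomposable cover $F_n$ would be a product set and the rate would degrade. Granting this, the conclusion follows by plugging the two estimates into Corollary \ref{cor:BleiJanson}: $d_K(S_n^{\rm fCP},N)\le C_1\,\Phi(F_n)+C_2\,(\sup_j\Psi_j(F_n))^{1/4}\lesssim n^{-1/(2m)}$, with a constant depending only on $q$ and $m$, hence, for fixed $q$ and $m$, not on $n$.
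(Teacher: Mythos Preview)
Your proposal is correct and follows essentially the same route as the paper: apply Corollary~\ref{cor:BleiJanson} and then bound $\Phi(F_n)$ and $\max_j\Psi_j(F_n)$ via the combinatorial estimates for fractional Cartesian products established in the computations leading to Proposition~6.6 of \cite{ReiPec}. The paper's own argument is in fact terser than yours, simply invoking Corollary~\ref{cor:BleiJanson} together with those computations; your explicit cardinality counts for $|F_n|$ and $|F_{n,j}^*|$ and your identification of connectedness as the key structural ingredient behind the $F_n^\sharp$ bound are accurate elaborations of what the paper leaves implicit.
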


\begin{remark}
The rate in Corollary \ref{cor:BleiJansonfCP} for the Kolmogorov distance is the same as in Proposition 6.6 in \cite{ReiPec}, where the authors considered a probability metric based on twice differentiable test functions.
\end{remark}

\subsection{Traces of powers of Bernoulli matrices}
As an application of Corollary \ref{cor:MutiIntegrals} we consider the normal approximation of a vector of traces of powers of a Bernoulli random matrix. This task has already been accomplished in \cite{NouPecMatrices} for more general classes of random matrices whose entries are independent and obey certain moment conditions. In this paper the authors used universality results for homogeneous sums established in \cite{NouPecReiInvariance}. Our Corollary \ref{cor:MutiIntegrals} offers a bound for the multivariate normal approximation for the special case of Bernoulli random matrices without resorting to universality results. For related limit theorems dealing with traces of random matrices we refer to \cite{AndZeit,Guionnet} and the references cited therein.\\
As already indicated in \cite{ReiPec} the results of discrete Malliavin calculus and Stein's method extend to Rademacher random variables indexed by arbitrary discrete sets. We make use of this possibility by  considering a doubly indexed collection of i.i.d.\ Rademacher random variables $(X_{ij})_{i,j\in\N}$. The object of interest is the {Bernoulli random matrix}
\[X_n:=\left({X_{ij}\over\sqrt {n}}\right)_{1\leq i,j\leq n}.\]
Let us denote by 
\[\text{trace}(X_n^q):=n^{-q/2}\sum_{i_1,i_2,\dots,i_q=1}^n X_{i_1i_2} X_{i_2i_3}\cdots X_{i_qi_1}\]
 the trace of the $q$th power of $X_n$. 
Recall the definition of the $d_4$-distance in \eqref{eq:d4Definition}.

\begin{theorem}\label{thm:RandMatrices}
Let $d\geq 2$ and $1\leq q_1<\dots<q_d$ be integers. Define the random vector 
\[\textbf{F}_n=\big({\rm trace}(X_n^{q_1})-\E[{\rm trace}(X_n^{q_1})],\dots,{\rm trace}(X_n^{q_d})-\E[{\rm trace}(X_n^{q_d})]\big)\] 
and let $\textbf{N}$ be a centred Gaussian random vector with covariance matrix $\Sigma=(\sigma_{ij})_{i,j=1}^d$ such that $\sigma_{ii}=q_i$ for $1\leq i\leq d$ and $\sigma_{ij}=0$ for $1\leq i\neq j\leq d$. Then
\begin{align}\label{eq:d40Matrices}
d_4(\textbf{F}_n,\textbf{N})\leq C\, n^{-{1/ 4}}
\end{align}
for a constant $0<C<\infty$ depending only on $q_1,q_2,\dots,q_d$ and $d$.
\end{theorem}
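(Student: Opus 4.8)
The plan is to represent each centred trace as a finite sum of discrete multiple stochastic integrals over the index set $\N\times\N$, to discard the sub-leading chaos components at a cost that is negligible for the $d_4$-distance, and then to apply Corollary~\ref{cor:MutiIntegrals} to the vector of leading components, at which point everything reduces to a graph-counting estimate on $\{1,\dots,n\}$.

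First I would expand $\mathrm{trace}(X_n^{q_i})=n^{-q_i/2}\sum_{i_1,\dots,i_{q_i}=1}^n X_{i_1i_2}X_{i_2i_3}\cdots X_{i_{q_i}i_1}$ over all closed walks of length $q_i$ on $\{1,\dots,n\}$. Since $X_{ab}^2=1$, the monomial attached to a walk collapses to the product of the Rademacher variables indexed by the directed edges traversed an \emph{odd} number of times; if there are $k$ of these, the monomial (after centring, when $k=0$) lies in the Rademacher chaos of order $k$. Collecting terms yields $F_n^{(i)}=\sum_{k=1}^{q_i}J_k(f_{n,k}^{(i)})$ with symmetric diagonal-free kernels $f_{n,k}^{(i)}\in\ell_0^2(\N\times\N)^{\circ k}$, each supported on the $k$-edge configurations occurring as the odd part of a length-$q_i$ closed walk, in particular on connected digraphs with balanced in- and out-degrees. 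The dominant term is $k=q_i$, where every traversed edge has multiplicity one: the sub-family of genuine directed $q_i$-cycles spanning $q_i$ distinct vertices accounts for $\asymp n^{q_i}$ configurations of weight $\asymp n^{-q_i/2}$, while every remaining contribution (a lower-order chaos, or a degenerate length-$q_i$ walk spanning at most $q_i-1$ vertices) involves only $O(n^{q_i-1})$ configurations of weight $O(n^{-q_i})$. Writing $g_n^{(i)}$ for the pure $q_i$-cycle kernel, a standard walk-counting computation of $\Var(\mathrm{trace}(X_n^{q_i}))$ then gives $\E[(F_n^{(i)}-J_{q_i}(g_n^{(i)}))^2]=O(n^{-1})$.

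Next I would pass to $\tilde{\mathbf F}_n:=(J_{q_1}(g_n^{(1)}),\dots,J_{q_d}(g_n^{(d)}))$. Every admissible test function $g$ in the definition \eqref{eq:d4Definition} of $d_4$ satisfies $M_1(g)\le1$, hence $|g(x)-g(y)|\le\sum_{i=1}^d|x_i-y_i|$; together with the triangle inequality for the integral probability metric $d_4$ this gives $d_4(\mathbf F_n,\mathbf N)\le\sum_{i=1}^d(\E[(F_n^{(i)}-J_{q_i}(g_n^{(i)}))^2])^{1/2}+d_4(\tilde{\mathbf F}_n,\mathbf N)=O(n^{-1/2})+d_4(\tilde{\mathbf F}_n,\mathbf N)$. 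Thus it suffices to bound $d_4(\tilde{\mathbf F}_n,\mathbf N)$, which is exactly the framework of Corollary~\ref{cor:MutiIntegrals}: each entry of $\tilde{\mathbf F}_n$ is a single multiple integral, the orders $q_1<\dots<q_d$ are pairwise distinct, and $\sigma_{ij}=0$ whenever $q_i\ne q_j$. In the bound of Corollary~\ref{cor:MutiIntegrals} the off-diagonal covariance defects vanish identically, since orthogonality of chaoses gives $\E[J_{q_i}(g_n^{(i)})J_{q_j}(g_n^{(j)})]=0=\sigma_{ij}$ for $i\ne j$; counting directed $q_i$-cycles gives $q_i!\,\|g_n^{(i)}\|^2=q_i(1+O(n^{-1}))$, so the diagonal defects are $O(n^{-1})$; and it remains to estimate the contraction norms $\|g_n^{(i)}\star_r^r g_n^{(j)}\|$ for the ranges of $r$ occurring in Corollary~\ref{cor:MutiIntegrals} (the auxiliary mixed contractions in \eqref{eq:Cor2} reduce to these as in the proof of that corollary). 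Now $\|g_n^{(i)}\star_r^r g_n^{(j)}\|^2$ is, up to a $q$-dependent constant and the weight $n^{-(q_i+q_j)}$, the number of ways to realize, with vertices in $\{1,\dots,n\}$, a fixed graph obtained by gluing a directed $q_i$-cycle and a directed $q_j$-cycle along $r$ common directed edges. Since $r$ edges lying inside a cycle form a disjoint union of paths, the two cycles share at least $r+1$ vertices, so such a configuration spans at most $q_i+q_j-r-1$ vertices; hence the count is $O(n^{q_i+q_j-r-1})$ and $\|g_n^{(i)}\star_r^r g_n^{(j)}\|=O(n^{-(r+1)/2})=O(n^{-1})$ for $r\ge1$.

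Feeding the covariance defects ($O(n^{-1})$) and the contraction norms ($O(n^{-1})$, their squares even smaller) into the bound of Corollary~\ref{cor:MutiIntegrals} gives $d_4(\tilde{\mathbf F}_n,\mathbf N)=O(n^{-1/4})$ (indeed $O(n^{-1/2})$), and combining with the preceding step yields $d_4(\mathbf F_n,\mathbf N)\le Cn^{-1/4}$ with $C$ depending only on $d$ and $q_1,\dots,q_d$. I expect the genuine difficulty to lie in the chaotic expansion itself: correctly identifying which edge-multigraphs appear in the expansion of a trace and with which coefficients, and, above all, verifying that every configuration other than the pure $q_i$-cycles contributes at strictly lower order in $n$, so that the replacement of $\mathbf F_n$ by $\tilde{\mathbf F}_n$ is legitimate. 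The contraction-norm estimate is then a routine graph-homomorphism count of precisely the kind already carried out for Gaussian and Poisson multiple integrals.
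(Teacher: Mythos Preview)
Your strategy coincides with the paper's: split $\mathrm{trace}(X_n^{q_i})-\E[\mathrm{trace}(X_n^{q_i})]$ into a single top-order multiple integral plus an $L^2$-small remainder, control $d_4(\mathbf F_n,\tilde{\mathbf F}_n)$ via $M_1(g)\le 1$, and then apply Corollary~\ref{cor:MutiIntegrals} to $\tilde{\mathbf F}_n$. The only substantive difference is that the paper quotes the needed estimates ($\E[(F_i-J_i)^2]=O(n^{-1})$, $|q_i-\E[J_{q_i}^2]|=O(n^{-1})$, $\|f_n^{(q)}\star_r^r f_n^{(q)}\|=O(n^{-1/2})$) from \cite{NouPecMatrices}, while you sketch them by direct graph counting.

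One point does need correction. Your description of $\|g_n^{(i)}\star_r^r g_n^{(j)}\|^2$ as ``the number of ways to realize a gluing of a $q_i$-cycle and a $q_j$-cycle along $r$ common edges'' is not right: expanding the square gives a sum of products of \emph{four} kernel values, i.e.\ a weighted count of configurations built from two $q_i$-cycles and two $q_j$-cycles sharing edges in a theta-type pattern (equivalently, $\|g\star_r^r h\|^2=\langle g\star_{q_i-r}^{q_i-r}g,\,h\star_{q_j-r}^{q_j-r}h\rangle$, as in the proof of Corollary~\ref{cor:MutiIntegrals}). Your two-cycle vertex count therefore does not justify the claimed $\|g_n^{(i)}\star_r^r g_n^{(j)}\|=O(n^{-(r+1)/2})$, and in particular your parenthetical ``indeed $O(n^{-1/2})$'' is unsubstantiated; the paper explicitly remarks that bounding the mixed contractions well enough to use the first estimate \eqref{eq:Cor1} would require considerable extra work. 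This does not affect the theorem itself: the established bound $\|f_n^{(q)}\star_r^r f_n^{(q)}\|=O(n^{-1/2})$ (same kernel) fed into the second estimate \eqref{eq:Cor2} of Corollary~\ref{cor:MutiIntegrals} already yields $d_4(\tilde{\mathbf F}_n,\mathbf N)=O(n^{-1/4})$, and hence \eqref{eq:d40Matrices}.
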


\begin{proof}
In order to apply Corollary \ref{cor:MutiIntegrals} we have to express $\text{trace}(X_n^{q_i})$ in terms of elements of some fixed Rademacher chaos. To this end, the following decomposition taken from \cite{NouPecMatrices} is crucial. It holds that

\[\text{trace}(X_n^{q})=n^{-q/2}\!\sum_{(i_1,\dots,i_q)\in D_n^{(q)}}X_{i_1i_2} X_{i_2i_3}\cdots X_{i_qi_1}+n^{-q/2}\!\sum_{(i_1,\dots,i_q)\notin D_n^{(q)}}X_{i_1i_2} X_{i_2i_3}\cdots X_{i_qi_1},\]

where \[D_n^{(q)}=\{(i_1,\dots,i_q)\in \{1,\dots,n\}^q:\, (i_a,i_{a+1})\neq(i_b,i_{b+1}) \text{ for } a\neq b\}\]
with $i_{q+1}:=i_1$.
This separation of the range of summation is necessary due to the fact that multiple integrals are defined only for kernels that are symmetric and vanish on diagonals. One has
\[n^{-q/2}\!\sum_{(i_1,\dots,i_q)\in D_n^{(q)}}X_{i_1i_2} X_{i_2i_3}\cdots X_{i_qi_1}=J_q\big(f^{(q)}_{n}\big),\]
where $f^{(q)}_{n}:=\widetilde{f_{q,n}}$ is the canonical symmetrization of
\[f_{q,n}((a_1,b_1),\dots,(a_q,b_q)):=n^{-q/2}\!\sum_{(i_1,\dots,i_q)\in D_n^{(q)}}\1_{\{i_{1}=a_1,i_{2}=b_1\}} \1_{\{i_{2}=a_2,i_{3}=b_2\}}\cdots\1_{\{i_{q}=a_q,i_{1}=b_q\}}.\]
Let $\textbf{J}_n=(J_1,\dots,J_d)$ denote the random vector $\big(J_{q_1}\big(f^{(q_1)}_{n}\big),\dots,J_{q_d}\big(f^{(q_d)}_{n}\big)\big)$. Then the triangle inequality for the $d_4$-distance implies that
\begin{equation}
d_4\left(\textbf{F}_n,\textbf{N}\right)\leq d_4\left(\textbf{F}_n,\textbf{J}_n\right)+d_4\left(\textbf{J}_n,\textbf{N}\right)\,.\label{eq:d4}
\end{equation}
From \cite[Equation (3.9)]{NouPecMatrices} one has
\begin{equation}
\big|q_i-\E\big[J_{q_i}\big(f^{(q_i)}_{n}\big)^2\big]\big|= O(n^{-1})\label{eq:VarTrace}\,,
\end{equation}
as $n\to\infty$ for all $i=1,\ldots,d$. This has actually been established in \cite{NouPecMatrices} for random matrices with independent Gaussian entries. But it is easily checked that the estimate continues to hold for Rademacher random variables. This together with the isometry \eqref{Isometry property} of discrete multiple stochastic integrals implies the information on the covariance structure of ${\bf F}_n$.

Next, from \cite[Equation (3.6)]{NouPecMatrices} one has
\begin{equation}
\lnormb{2}{2(q-r)}{f^{(q)}_{n}\star_r^rf^{(q)}_{n}}=O(n^{-1/2})\label{eq:normsTraces}
\end{equation}
for all $r=1,\dots,q-1$ in the limit, as $n\to\infty$. In view of \eqref{eq:VarTrace} and \eqref{eq:normsTraces}, Corollary \ref{cor:MutiIntegrals} yields
\begin{equation}
d_4\left(\textbf{J}_n,\textbf{N}\right)= O(n^{-1/4})\,.\label{eq:TraceB1}
\end{equation}

Furthermore, Proposition 4.1 in \cite{NouPecMatrices} states that for all $1\leq i\leq d$,
\begin{align}
&\quad\E[(F_i-J_i)^2]\notag\\
&=\E\Big[\Big(n^{-q_i/2}\!\sum_{(i_1,\dots,i_{q_i})\notin D_n^{(q_i)}}\big(X_{i_1i_2} X_{i_2i_3}\cdots X_{i_{q_i}i_1}-\E[X_{i_1i_2} X_{i_2i_3}\cdots X_{i_{q_i}i_1}]\big)\Big)^2\Big]=O(n^{-1})\,.\label{eq:RemainderBound}
\end{align}
Let $g:\R^d\rightarrow\R$ be an admissible test function for the $d_4$-distance. Then, writing $\|\,\cdot\,\|_{\R^d}$ for the standard euclidean norm in $\R^d$, we find that
\begin{align}
\big|\E g({\bf F}_n)-\E g({\bf J}_n)\big|&\leq M_1(g)\,\E[\|{\bf F}_n-{\bf J}_n\|_{\R^d}]
\leq \E\bigg[\big(\sum_{i=1}^d(F_i-J_i)^2\big)^{1/2}\bigg]\notag\\
&\leq \Big(\sum_{i=1}^d\E[(F_i-J_i)^2]\Big)^{1/2}=O(n^{-1/2})\,,\label{eq:TraceB2}
\end{align}
for a constant $0<C<\infty$ which is independent of $n$. Here, we have used the fact that $M_1(g)\leq 1$, the Cauchy-Schwarz inequality and \eqref{eq:RemainderBound}. The decomposition \eqref{eq:d4} together with \eqref{eq:TraceB1} and \eqref{eq:TraceB2} proves the assertion.
\end{proof}

\begin{remark}
The rate of convergence obtained in Theorem \ref{thm:RandMatrices} by means of the Malliavin-Stein method for Rademacher sequences is of the same order of magnitude as the rate in \cite{NouPecMatrices}, which is based on the universality results in \cite{NouPecReiInvariance}. The only difference is that for technical reasons we had to assume a slightly higher degree of differentiability for the test functions in the probability metric.
\end{remark}

\begin{remark}
For the proof of Theorem \ref{thm:RandMatrices} we have used the second inequality in Corollary \ref{cor:MutiIntegrals}, which involves square-roots of contraction norms. One might hope to improve \eqref{eq:d40Matrices} by using the first inequality in Corollary \ref{cor:MutiIntegrals}. For this, one would need to extend \eqref{eq:normsTraces} to an estimate for norms of mixed contractions. We  expect that this requires considerable effort since already the proof of \eqref{eq:normsTraces} relies on involved combinatorial decompositions and identities.
\end{remark}


\bibliography{Rademacher}

\end{document}